\newcommand{\bb}[1]{\left({#1}\right)}					
\newcommand{\sq}[1]{\left[#1\right]}						
\newcommand{\cc}[1]{\left\{#1\right\}}					
\newcommand{\op}[1]{\mathcal{#1}}
\newcommand{\ord}[1]{{\mathcal{O}}\bb{#1}}
\newcommand{\abs}[1]{\left|#1\right|}					
\newtheorem{theorem}{Theorem}
\newtheorem{lemma}[theorem]{Lemma}
\newtheorem{proposition}[theorem]{Proposition}
\newtheorem{corollary}[theorem]{Corollary}
\newtheorem{definition}{Definition}
\newtheorem{remark}{Remark}
\newcommand{\dd}{\delta_0}
\newcommand{\KK}{\mathcal{K}}
\newcommand{\OO}{\mathcal{O}}
\newcommand{\eps}{\varepsilon}
\newcommand{\al}{\alpha}
\newcommand{\fref}[1]{figure~\ref{#1}}
\newcommand{\eref}[1]{(\ref{#1})}
\newcommand{\erefs}[2]{(\ref{#1})-(\ref{#2})}
\newcommand{\sref}[1]{section~\ref{#1}}
\newcommand{\cref}[1]{chapter~\ref{#1}}
\def\eps{\varepsilon}
\begin{document}

\title{A unified approach to explain contrary effects of hysteresis and smoothing in nonsmooth systems}%
\author{
Carles Bonet \& Tere M. Seara\footnote{Departament de Matem\`atiques, Universitat Polit\`ecnica de Catalunya}\\
Enric Fossas\footnote{Institut d'Organitzaci\'o i Control de Sistemes Industrials, Universitat Polit\`ecnica de Catalunya},
Mike R. Jeffrey\footnote{Department of Engineering Mathematics, University of Bristol, UK, email: mike.jeffrey@bristol.ac.uk}}
\date{\today}

\maketitle




\begin{abstract}
Piecewise smooth dynamical systems make use of discontinuities to model switching between regions of smooth evolution.
This introduces an ambiguity in prescribing dynamics at the discontinuity: should it be given by a limiting value on one side or other of the discontinuity,
or a member of some set containing those values? One way to remove the ambiguity is to {\it regularize} the discontinuity,
the most common being either to smooth out the discontinuity, or to introduce a hysteresis between switching in one direction or the other across the discontinuity.
Here we show that the two can in general lead to qualitatively different dynamical outcomes. We then define a higher dimensional model with both smoothing and hysteresis, and study the competing limits in which hysteretic or smoothing effect dominate the behaviour, only the former of which correspond to Filippov's standard `sliding modes'.
\end{abstract}

\section{Introduction}\label{sec:intro}


The existence of solutions to a system of ordinary differential equations is well established if they are sufficiently smooth \cite{lindelof}.
Even at places where the equations are discontinuous, existence of solutions can be proven using
the theory of differential inclusions \cite{f88}.
To explicitly describe those solutions is another problem, however.
The most commonly used formalism is due to Filippov \cite{f88} and its control application by Utkin \cite{u77}.
They essentially approximate chattering to-and-fro across a discontinuity by a steady flow precisely along the discontinuity.
Utkin's method is sometimes misinterpreted as being different to Filippov's, if taken literally (when in fact the intended outcome is the same, see e.g. \cite{u92}).
Whereas Filippov describes a linear (and hence convex) combination of vector fields, $\lambda{\bf f}^++(1-\lambda){\bf f}^-$ for $\lambda\in[0,1]$,
Utkin describes a function ${\bf f}({\bf x},u)$ where $u\in[0,1]$ and ${\bf f}({\bf x},1)\equiv{\bf f}^+$, ${\bf f}({\bf x},0)\equiv{\bf f}^-$.
While Utkin intends this function to be exactly Filippov's linear combination (i.e. $u=\lambda$), the formulation does raise the question: what if the dependence on $\lambda$ or $u$ is nonlinear?
It is well known that nonlinear dependence on the switching quantity can produce different dynamics (see \cite{j13error}),
but the precise conditions under which it does so are a subject of ongoing study.
This distinction is an important one, since the burgeoning theory of discontinuity-induced bifurcations relies heavily on the canonical form
of dynamics due to Filippov, and very little of the established theory applies for nonlinear dependence on $\lambda$ in general.

Important contributions to the theory of these methods include \cite{aizerman12,and59,bc08,fluggelotz,krg03,tsg11},
and while alternatives exist they do not resolve the ambiguity at the discontinuity \cite{hajek1,hermes68,j13error,t07}.
Most authors follow Filippov by convention, particularly in the growing theory of discontinuity-induced singularities and bifurcations.
Much generality is lost from the current theory by ignoring this issue, however, and unnecessarily so,
for the same methods used to study Filippov systems can be extended to the more general systems admitting {\it nonlinear switching}.


We can illustrate the disparity between dynamics subject to linear and nonlinear switching with a simple example proposed by Filippov and Utkin themselves
(given in \cite{f88,u92}).
Consider the planar piecewise-smooth system
\begin{equation}\label{exutkin}
\dot x=0.3+u^3\qquad\dot y=-0.5-u\qquad u={\rm sign}(y)\;.
\end{equation}
In $y\neq0$ the solutions are simply straight trajectories that travel towards $y=0$, called the {\it switching surface}, and hit it in finite time.
Since they cannot then leave $y=0$, the solutions for all later times must satisfy $\dot y=0$, and are said to {\it slide} along the switching surface.
We use this condition to find the value of $u$ on $y=0$.
Filippov's and Utkin's manners of finding these sliding trajectories imply a linear or nonlinear treatment of \eref{exutkin}:
\begin{itemize}
\item nonlinear (Utkin's formulation): the vector field as written above has a continuous dependence on $u$ with $u\in\sq{-1,+1}$,
so simply solve $\dot y=-0.5-u=0$ on $y=0$ to find $u=-0.5$, then taking the expression for $\dot x$ we have $$\dot x=0.3+(-0.5)^3=0.175\;.
$$
\item
linear (Filippov's formulation):
the vector $(\dot x,\dot y)$ jumps between the values $(1.3,-1.5)$ and $(-0.7,0.5)$ across $y=0$, so assume on $y=0$
it is a convex combination $(\dot x,\dot y)=\lambda(1.7,-1.5)+(1-\lambda)(-0.7,0.5)$ with
$\lambda\in\sq{0,1}$, and solve $\dot y=0$ 
to find $\lambda=0.25$, then the convex combination of $\dot x$ values gives $$\dot x=0.25(1.3)+(1-0.25)(-0.7)=-0.2\;.$$
\end{itemize}
Not only are the magnitudes of the two sliding velocities different, but they are in opposite directions.
Along $y=0$, Filippov's approach predicts motion to the left while Utkin's predicts motion to the right!
These are illustrated in \fref{fig:ut0}.
%
\begin{figure}[h!]\centering\includegraphics[width=0.95\textwidth]{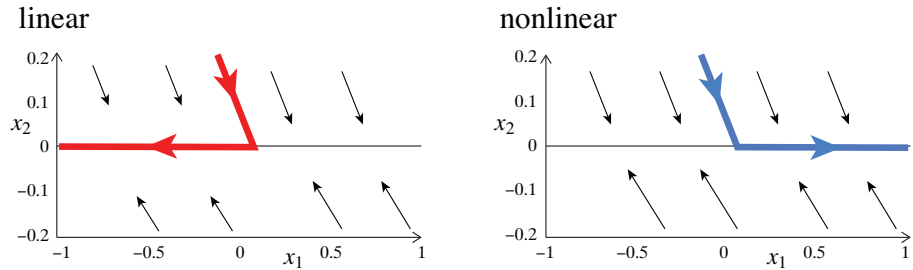}
\vspace{-0.3cm}\caption{\footnotesize\sf The vector field (\ref{exutkin}) with a switching surface $y=0$,
and sliding motion along the surface to the right according to the nonlinear formulation, or to the left according to the linear formulation.
The two figures agree for $y\neq0$, but give opposing solutions on $y=0$.}\label{fig:ut0}\end{figure}

Clearly, to decide between the contrary outcomes we must improve the discontinuous model, but we must be aware of tautologies:
{\it both limiting} solutions can be rigorously proven to be valid under different assumptions, as we will demonstrate.
We clarify the situation by showing that introducing hysteresis in the switch implies that solutions lie close to Filippov's,
while smoothing out the switch implies that solutions lie close to Utkin's. That is, we replace an ideal switch with a boundary layer
which is, in some sense, negative in the Filippov case and positive in the Utkin case. In \sref{sec:u} we unify these contradictory behaviours by proposing a model with both smoothing and hysteresis, achieved by embedding the planar problem in a three dimensional slow-fast system.

Invoking the names of Filippov and Utkin for the two approaches neglects the deeper and more general investigations by these authors,
and their various works are highly recommended for further reading.
In \cite{u92} Utkin suggests that his `equivalent control' method should only be used when $u$ appears linearly in \eref{Ns},
which is precisely the case when it is equivalent to Filippov's method \cite{f88}. However, the two approaches are both powerful and, as we shall see,
both correct in differing scenarios, and it is those scenarios that we seek to better understand here.

Before continuing we make a remark on generality.
The reader will lose nothing by considering $x,y$ and $u$ to be scalars, but all of the following analysis is written in such a way that it applies also
when $x$ is a vector.
For convenience we use terms such as `curve', `surface', etc. as if $x$ were a scalar (e.g. the set $y=0$ is therefore a plane in the space of $x,y,u,$
and the set $u=y=0$ is a line, though more generally these are sets of codimension one and two, respectively).
The analysis can also be extended to multiple discontinuities by letting $u$ be a vector of parameters $u_1, u_2,...$,
each component having a different discontinuity surface $y_1=0,\;y_2=0,...$, however this extension is not trivial and requires further analysis at
points where different discontinuity surfaces intersect, see for example \cite{dieci2011,j13iso}.

The paper is arranged as follows.
In \sref{sec:ns} we review the two canonical methods for solving dynamics at a discontinuity due to Filippov and Utkin,
showing that they can be seen as limits of hysteresis and smoothing respectively.
Our main results are in \sref{sec:u}, where we embed our non-smooth system in a slow-fast smooth system  which, depending on the shape of its critical manifold,
tends either to the linear (Filippov) or nonlinear (Utkin) dynamics.
Some of the lengthier details proving these limits are given in the appendix, after some closing remarks in \sref{sec:conc}.

\section{The discontinuous models}\label{sec:ns}


Let variables $x\in\mathbb R^{n-1}$ and $y\in\mathbb R$ satisfy a differential equation
\begin{equation}\label{Ns}
\begin{array}{rcl}
\dot x &=& f(x,y; u)\\
\dot y &=& g(x,y; u)
\end{array}
\end{equation}
where $f$ and $g$ are smooth functions of $x,y,u,$ and where $u$ is given by
\begin{eqnarray}\label{hyst0}
u={\rm sign}(y)\;.
\end{eqnarray}
The values of the vector field either side of the switch can be written as
\begin{equation}\label{FF}
f^\pm(x,y)=f(x,y; \pm1)\qquad{\rm and}\qquad g^\pm(x,y)=g(x,y; \pm1)\;.
\end{equation}
We will only be interested in the case where the flow is directed towards the switching surface $y=0$ from both sides,
so we restrict to a range of $x$ such that for some $M$,
\begin{equation}\label{g}
\left.\begin{array}{l}g(x,0; +1)<0<g(x,0; -1)\\{\rm and}\qquad\frac{\partial\;}{\partial u}g(x,0; u)<0\end{array}\right\}\quad\mbox{for}\;\;\;x\in [-M,+M] \;.
\end{equation}

While this system is smooth away from $y=0$, equations \erefs{Ns}{hyst0} do not provide a well-defined value for $(f,g)$ on $y=0$.
In a piecewise-smooth dynamics approach to \eref{Ns}-\eref{hyst0}, we attempt to resolve the discontinuity by defining $f(x,y;u)$
and $g(x,y;u)$ in such a way that the system:
\begin{enumerate}
\item coincides with \eref{Ns}-\eref{hyst0} for $y\neq0$,
\item extends $f$ and $g$ to be well-defined for all $(x,y)$.
\end{enumerate}

\subsection{Filippov and Utkin's conventions}\label{sec:fu}

Let us begin by paraphrasing the classic approaches of Filippov's sliding and Utkin's equivalent control, or more correctly, of linear and nonlinear sliding.
Define a solution of \eref{Ns}-\eref{hyst0} that travels along the switching surface $\Sigma =\{(x,y)\in\mathbb R:\;y=0\}$ for an interval of time as follows:

\begin{definition}\label{def:fil}
Filippov's {\rm sliding dynamics} along the discontinuity $y=0$ is given by
\begin{equation}\label{fil}
\left.\begin{array}{rll}
\dot x&=&\lambda f^+(x,0)+(1-\lambda)f^-(x,0)\\
0&=&\lambda g^+(x,0)+(1-\lambda)g^-(x,0)
\end{array}\right\}
\end{equation}
if there exist solutions such that $\lambda\in\sq{0,1}$.
\end{definition}

\begin{definition}\label{def:utkin}
Utkin's equivalent control along the discontinuity $y=0$ is given by
\begin{equation}\label{avcon}
\left.\begin{array}{rll}
\dot x&=&f(x,0;u)\\
0&=&g(x,0;u)\end{array}\right\}
\end{equation}
if there exist solutions such that $u\in\sq{-1,+1}$.
\end{definition}
While Definition \ref{def:fil} permits only linear dependence on the switching quantity (here $\lambda$), Definition \ref{def:utkin} permits nonlinear dependence on the switching quantity (here $u$).

\bigskip
In either case, for a trajectory moving along $y=0$ the component normal to the
{switching surface} must be zero (hence $\dot y=0$),
which gives the algebraic constraint in the second line of each definition.
For \eref{fil} we can solve to find
\begin{equation}
\lambda=\Lambda(x):=\frac{g^-(x,0)}{g^-(x,0)-g^+(x,0)}\qquad{\rm on}\;\;y=0\;,
\end{equation}
which lies in the range $\sq{0,1}$ if $g^+$ and $g^-$ have opposite signs, as given by \eref{g}.
The velocity along the switching surface $y=0$ is then
\begin{eqnarray}
\dot x&=&f_F(x):= f^-(x,0)+\cc{f^+(x,0)-f^-(x,0)}\Lambda\bb{x} \nonumber\\
&=&\frac{f^+g^--f^-g^+}{g^--g^+}(x,0)\;. \label{xfilippov}
\end{eqnarray}

In \eref{avcon} we assume instead that the vector field at the switching surface jumps between $(f^+,g^+)$ and $(f^-,g^-)$ in such a way that the functional forms
$f=f(x,y;u)$ and $g=g(x,y;u)$ remain valid on $y=0$.
We then seek the value of $u\in\sq{-1,+1}$ that ensures a trajectory moves along $y=0$ (and therefore, again, $\dot y=0$), given by the second line of \eref{avcon}.
On a region where $\partial g\bb{x,0;u}/\partial u\neq0$ we can solve this condition to find
\begin{equation}\label{uutkin}
u=U(x), \ \mbox{ such that } \ g(x,0;U(x))=0, \ \forall x \in [-M,M], \ {\rm on}\;\;y=0\;,
\end{equation}
which has a solution in the range $\sq{-1,+1}$ by \eref{g}.
The velocity along the switching surface $y=0$ is then
\begin{equation}\label{xutkin}
\dot x=f_U(x):=f\bb{x,0;U(x)}\;.
\end{equation}

The two systems \eref{fil} and \eref{avcon} (equivalently \eqref{xfilippov} and  \eqref{xutkin}) are  equivalent when $f$ and $g$ depend linearly on $u$,
when we can write
\begin{equation}\label{lin}
\begin{array}{rcl}
f(x,y;u)=a(x,y)+b(x,y)u \;,\\
g(x,y;u)=c(x,y)+d(x,y)u\;,
\end{array}
\end{equation}
with
\begin{eqnarray*}
a&=&\bb{f^-+f^+}/2\;,\qquad b=\bb{f^+-f^-}/2\;,\\
c&=&\bb{g^-+g^+}/2\;,\qquad d=\bb{g^+-g^-}/2\;,
\end{eqnarray*}
where by \eqref{FF}, $g^\pm(x,y)=g(x,y;\pm1)$ and $f^\pm(x,y)=f(x,y;\pm1)$.
Computing $U(x)$ in this case using equation \eqref{uutkin}, we obtain $U(x)=-\frac{c(x,0)}{d(x,0)}$, and the vector field \eqref{xutkin}
gives the same equations as \eqref{xfilippov}.

\bigskip

When $f$ or $g$ depend nonlinearly on $u$, as we saw in example \eqref{exutkin}, the Filippov and Utkin approaches are distinct,
but in the next section we will show that both approaches can be proven to constitute suitable approximations of the dynamics of system \eref{Ns}.
The distinction turns out to be a practical one:
introducing hysteresis in the switch implies that solutions lie close to Filippov's solution $x_F(t)$ of \eqref{xfilippov},
while smoothing out the switch implies solutions lie close to Utkin's solution $x_U(t)$ of \eqref{xutkin}.
If a model is both smooth in $(x,y,u)$ {\it and} can exhibit hysteresis
(which is the likely situation in many physical systems),
then it is unclear which method to apply (see the example in the introduction).


\subsection{The limit of hysteretic and smoothing regularizations}\label{sec:tautology}

Building on previous works (e.g. \cite{slot91,f88,u92}) let us consider two different models for regularizing a switch,
expressible as perturbations of the nonsmooth system \eref{Ns}.
One model introduces hysteresis in the switch over a distance $|y|<\alpha$,
the other smooths out the discontinuity over a boundary layer $|y|<\alpha$, where $\alpha$ is small in both cases.


%

To introduce hysteresis we consider \eref{Ns} but introduce a negative boundary layer, that is, an overlap between  the regions where $u=+1$ or $u=-1$,
over a region $|y|\le\alpha$.
That is,
\begin{eqnarray}\label{hyst}
u\in\left\{\begin{array}{lll}+1&\rm if&y>-\alpha\;,\\\sq{-1,+1}&\rm if&|y|\le\alpha\;,\\-1&\rm if&y<+\alpha\;,\end{array}\right.
\end{eqnarray}
and switching occurs such that a trajectory with $u=-1$ will maintain this value until it reaches the surface $y=+\alpha$, then switch to $u=+1$.
A trajectory with $u=+1$ will maintain this value until it reaches the surface $y=-\alpha$, then switch to $u=-1$.
Proceeding in this way, we will obtain the hysteretic solution that we denote by $(x_h(t), y_h(t))$ (see \fref{fig:ut1}).

\begin{theorem}[Linear sliding dynamics from hysteresis] \label{propfil}
Fix $T >0$ and consider the solution $x_F(t)$ of the Filippov System \eqref{xfilippov} in $\Sigma$,
and assume that $|x_F(t)| <M$ for $0\le t\le T$ where $M$ is given in \eqref{g}.
Then there exists $\alpha _0>0$ and a constant $L>0$ such that,
for any $0<\al\le \al_0$, if we
consider the hysteretic solution $(x_h(t), y_h(t))$ with initial condition
$(x_h(0), y_h(0))=(x_0, \mp\al)=(x_F(0), \mp\al)$, then $x_h$ satisfies:
\begin{equation}\label{uta}
|x_h(t)-x_F(t)| \le L \al \quad 0\le t\le T\;.
\end{equation}
\end{theorem}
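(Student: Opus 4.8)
\medskip

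The plan is to exploit that the hysteretic solution $(x_h(t),y_h(t))$ is a fast \emph{chattering} motion, bouncing between the lines $y=-\al$ and $y=+\al$ with period of size $\ord{\al}$, and that the time-average over one such period of the velocity of the slow variable $x$ is exactly Filippov's vector field $f_F$ of \eqref{xfilippov}. A discrete Gronwall estimate then upgrades this averaged agreement to the uniform bound \eqref{uta} on the fixed interval $[0,T]$.

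First I would control the chattering. By \eqref{g}, $g^+(x,0)<0<g^-(x,0)$ on the compact set $|x|\le M$, so by continuity of $g$ there are $\al_0>0$ and $m>0$ with $g(x,y;-1)\ge m$ and $g(x,y;+1)\le-m$ whenever $|y|\le\al_0$ and $|x|\le M$. Hence, as long as $x_h$ stays in $[-M,M]$, a segment with $u=-1$ drives $y$ monotonically from $-\al$ up to $+\al$ over a time $\Delta t_-=2\al/g^-(x_h,0)+\ord{\al^2}$, a segment with $u=+1$ drives $y$ from $+\al$ down to $-\al$ over a time $\Delta t_+=2\al/|g^+(x_h,0)|+\ord{\al^2}$, and $|y_h|$ never exceeds $\al$. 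On each segment $x$ moves only by $\ord{\al}$, so $f$ and $g$ may be frozen at their values at $(x_h,0)$ at the start of the segment, incurring only an $\ord{\al^2}$ error in the resulting $x$-displacement and in $\Delta t_\pm$.

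Next I would compare the two solutions at sampled times. Let $0=s_0<s_1<\cdots$ be the instants at which a fresh period begins (consisting of one $u=-1$ segment and one $u=+1$ segment), so $\tau_k:=s_{k+1}-s_k=\Delta t_-+\Delta t_+$ satisfies $c_1\al\le\tau_k\le c_2\al$ for some $c_1,c_2>0$. Adding the two segment displacements, over one period $x$ moves by $\tau_k$ times the combination $(1-w)f^-+wf^+$ evaluated at $(x_h(s_k),0)$, plus $\ord{\al^2}$, where $w:=\Delta t_+/(\Delta t_-+\Delta t_+)$; a short computation with the above expressions for $\Delta t_\pm$ gives $w=\Lambda(x_h(s_k))+\ord{\al}$ with $\Lambda$ as in \eqref{xfilippov}, and since $(1-\Lambda)f^-+\Lambda f^+=f_F$ by \eqref{xfilippov} this yields
\[
x_h(s_{k+1})-x_h(s_k)=f_F\bigl(x_h(s_k)\bigr)\,\tau_k+\ord{\al^2}.
\]
On the other hand $x_F(s_{k+1})-x_F(s_k)=f_F(x_F(s_k))\,\tau_k+\ord{\al^2}$ by a Taylor expansion of the smooth flow of \eqref{xfilippov}. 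Subtracting, and using that $f_F$ is Lipschitz on $[-M,M]$ (it is smooth there since $g^--g^+\neq0$), the errors $e_k:=x_h(s_k)-x_F(s_k)$, which start from $e_0=0$, obey $|e_{k+1}|\le(1+K\tau_k)|e_k|+C\al^2$. Since $\sum_k\tau_k\le T$ and there are at most $T/(c_1\al)$ periods in $[0,T]$, the discrete Gronwall lemma gives $|e_k|\le\frac{C\al}{c_1K}(e^{KT}-1)$ at every sampled time; interpolating inside a period — where $x_h$ and $x_F$ each move only $\ord{\al}$ — yields $|x_h(t)-x_F(t)|\le L\al$ on all of $[0,T]$, which is \eqref{uta}. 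The standing assumption $x_h\in[-M,M]$ is then discharged by a bootstrap: since $\sup_{[0,T]}|x_F|<M$, there is $\eta>0$ with $|x_F|\le M-\eta$ on $[0,T]$, and shrinking $\al_0$ so that $L\al_0<\eta$ forces $|x_h(t)|<M$ wherever the estimate holds, so the set of times on which it holds is open, closed and nonempty in $[0,T]$.

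The main obstacle is the rigorous averaging in the third paragraph: one must verify that the perturbations of the half-period lengths $\Delta t_\pm$ caused by the $\ord{\al}$ drift of $x$ and the $\ord{\al}$ excursion of $y$ feed back only at order $\ord{\al^2}$ into the per-period $x$-displacement, and that these $\ord{\al^2}$ errors, accumulated over the $\ord{1/\al}$ periods contained in $[0,T]$, still sum to $\ord{\al}$ rather than $\ord{1}$ — which is exactly where the Gronwall structure, the uniform lower bound $|g^\pm|\ge m>0$, and the finiteness of $T$ are used. The detailed book-keeping of these error terms is the kind of lengthy estimate the authors defer to the appendix.
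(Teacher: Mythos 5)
Your proposal is correct and takes essentially the same route as the paper's proof: half-cycle transit times $\Delta t_\pm = 2\alpha/|g^\pm|+O(\alpha^2)$, an $O(\alpha^2)$ per-cycle mismatch with the Filippov flow, $O(1/\alpha)$ cycles on $[0,T]$, a discrete Gronwall recursion $|e_{k+1}|\le(1+K\tau_k)|e_k|+C\alpha^2$, and interpolation inside a cycle, with a confinement argument to keep $x_h$ in the compact set. The only difference is presentational: the paper first applies a flow-box change of variables so that $(f^+,g^+)=(0,-1)$, which makes the $u=+1$ half-cycle trivial (time exactly $2\alpha$, no $x$-motion), whereas you treat the two half-cycles symmetrically and recover Filippov's convex weight $\Lambda$ from the ratio $\Delta t_+/(\Delta t_-+\Delta t_+)$.
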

\begin{proof}
In Appendix \ref{sec:hproof}.
\end{proof}

Now
we consider again \eref{Ns}, but replace the definition \eref{hyst0} of $u$ with a smooth sigmoid function, such as $u=\phi(y/\al )$ where
\begin{equation}\label{phi}
\phi(w)\in\left\{\begin{array}{lll}{\rm sign}(w)&\rm if&|w|>1\;,\\\sq{-1,+1}&\rm if&|w|\le1\;,\end{array}\right.\;
\end{equation}
with $\phi'(w)>0$ for $|w|<1$.


\begin{theorem}[Nonlinear sliding dynamics from smoothing.] \label{proput}
Fix $T >0$ and consider the solution $x_U(t)$ of the Utkin's equivalent control \eqref{xutkin} in $\Sigma$,
and assume that $|x_U(t)| <M$ for $0\le t\le T$ where $M$ is given in \eqref{g}.
Then there exists $\al _0>0$ and a constant $L>0$ such that,
for any $0<\al \le \al _0$, if we
consider the smooth system \eref{Ns} where $u=\phi(y/\al )$,
a solution of this system
 $(x(t), y(t))$ with initial condition
$(x(0), y(0))=(x_0, y_0)=(x_U(0), y_0)$, $y_0 \in [-\al ,\al ]$, satisfies
\begin{equation}
|x(t)-x_U(t)| \le L \al  \quad 0\le t\le T\;.
\end{equation}
\end{theorem}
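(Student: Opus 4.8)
The plan is to treat this as a singular-perturbation problem in which $y$ is the fast variable and $x$ the slow one. Setting $w=y/\alpha$ turns the smoothed system into the slow--fast form $\dot x=f(x,\alpha w;\phi(w))$, $\alpha\dot w=g(x,\alpha w;\phi(w))$, whose critical manifold $\{g(x,0;\phi(w))=0\}$ is, by the definition \eqref{uutkin} of $U$ and the strict monotonicity of $\phi$ on $(-1,1)$, exactly the graph $w=w^*(x):=\phi^{-1}(U(x))$, and on it the reduced flow is $\dot x=f(x,0;\phi(w^*(x)))=f(x,0;U(x))=f_U(x)$, i.e. Utkin's equation \eqref{xutkin}. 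Hypothesis \eqref{g} makes this manifold attracting, since along it $\partial_w[g(x,0;\phi(w))]=\partial_ug(x,0;\phi(w))\,\phi'(w)<0$; moreover, because $g(x,0;u)$ is strictly decreasing with the sign change in \eqref{g}, $U(x)$ lies in a compact subinterval of $(-1,1)$ for $|x|\le M$, so $w^*$ is $C^1$ there and stays bounded away from $\pm1$. One could invoke Fenichel/Tikhonov theory at this point, but to obtain the sharp $O(\alpha)$ rate (rather than merely $o(1)$, with a possibly logarithmic boundary-layer loss) I would argue by hand.

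First I would confine the trajectory. On the section $y=\alpha$ the switch value is $\phi(1)=1$, so $\dot y=g(x,\alpha;1)$, which is negative for $|x|\le M$ once $\alpha$ is small, by \eqref{g} and continuity; similarly $\dot y=g(x,-\alpha;-1)>0$ on $y=-\alpha$. Hence the strip $\{|y|\le\alpha\}$ is positively invariant as long as $|x(t)|\le M$, and I would run the argument on a maximal interval on which $|x(t)|<M$, closing a bootstrap at the end using the final $O(\alpha)$ bound together with the hypothesis $|x_U|<M$.

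The core of the proof is a boundary-layer estimate for $v(t):=y(t)/\alpha-w^*(x(t))$. Differentiating, using $g(x,0;\phi(w^*(x)))=g(x,0;U(x))=0$, and a mean-value expansion in the last argument of $g$ give
\begin{equation*}
\dot v=\frac1\alpha\,g\big(x,\alpha w^*+\alpha v;\phi(w^*+v)\big)-Dw^*(x)\,f(x,\alpha w^*+\alpha v;\phi(w^*+v))=-\frac{\mu(x,v,\alpha)}{\alpha}\,v+r(x,v,\alpha),
\end{equation*}
where smoothness of $f,g$ in $y$ and the sign/monotonicity conditions \eqref{g} give $\mu\ge\mu_0>0$ and $|r|\le C$ uniformly on the relevant set. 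A scalar comparison then yields $|v(t)|\le|v(0)|\,e^{-\mu_0 t/\alpha}+C\alpha$, and since $|v(0)|\le 1+\sup_{|x|\le M}|w^*(x)|=O(1)$ we get $|v(t)|\le C\big(e^{-\mu_0 t/\alpha}+\alpha\big)$. The one technical point to dispatch here is that if $y_0/\alpha$ starts near $\pm1$, where $\phi'$ may degenerate, one first checks (exactly as in the confinement step) that $\dot y$ then has the sign pushing $y/\alpha$ into the interior, so $v$ enters the region where $\phi'\ge c_0>0$ after an $O(\alpha)$ transient during which $x$ moves only $O(\alpha)$.

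Finally I would close with a Gronwall argument. Using $x(0)=x_U(0)$,
\begin{equation*}
x(t)-x_U(t)=\int_0^t\big[f(x,y;\phi(y/\alpha))-f_U(x)\big]\,ds+\int_0^t\big[f_U(x)-f_U(x_U)\big]\,ds .
\end{equation*}
In the first integrand, Lipschitz continuity of $f$ in $y$ and in its switch argument, together with $|y|\le\alpha$, bounds it by $C\alpha+C|\phi(y/\alpha)-\phi(w^*(x))|\le C\alpha+C|v(s)|$, and $\int_0^t|v(s)|\,ds\le C\alpha$ because $\int_0^t e^{-\mu_0 s/\alpha}\,ds\le\alpha/\mu_0$; the second integrand is $\le C|x(s)-x_U(s)|$ by Lipschitz continuity of $f_U$. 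Hence $|x(t)-x_U(t)|\le C\alpha+C\int_0^t|x(s)-x_U(s)|\,ds$, and Gronwall gives $|x(t)-x_U(t)|\le C\alpha\,e^{CT}=:L\alpha$ on $[0,T]$, which also delivers the bootstrap $|x(t)|<M$ once $\alpha_0$ is small. The main obstacle is the boundary-layer estimate of the third paragraph --- extracting a uniform contraction rate $\mu_0$ while the base point $w^*(x(t))$ drifts and $\phi'$ possibly vanishes at the ends of the transition layer; the rest is routine Gronwall bookkeeping.
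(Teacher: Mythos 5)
Your proposal is correct, and it reaches the conclusion by a genuinely different technical route than the paper. The paper sets up the same slow--fast rescaling $v=y/\alpha$ and identifies the same critical manifold $v=\gamma_0(x)$ (your $w^*(x)=\phi^{-1}(U(x))$) carrying Utkin's reduced dynamics, but from there it invokes Fenichel theory: normal hyperbolicity gives an invariant slow manifold $v=\gamma(x;\alpha)=\gamma_0(x)+\ord{\al}$, solutions starting with $v_0\in[-1,1]$ are absorbed into its basin of exponential attraction (this step is dispatched briefly, citing \cite{BonetS16}), and the estimate $|x(t)-x_\al(t)|\le L_1e^{-L_2t/\al}$ together with $x_\al(t)=x_U(t)+\ord{\al}$ yields the theorem. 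You instead bypass Fenichel entirely: you prove positive invariance of the strip $|y|\le\al$, derive a differential inequality $\dot v=-\tfrac{\mu}{\al}v+r$ for the deviation $v=y/\al-w^*(x)$ with $\mu\ge\mu_0>0$ away from the ends of the transition layer, treat the possible degeneracy of $\phi'$ near $v=\pm1$ by an explicit $\ord{\al}$ transient argument, and close with Gronwall on $x-x_U$ using $\int_0^t|v|\,ds=\ord{\al}$. The trade-off: the paper's argument is shorter and more conceptual given the cited machinery, and it produces an actual invariant manifold; yours is self-contained, yields the $\ord{\al}$ rate with explicit constants without needing persistence of the slow manifold, and makes explicit the initial-layer step (entry into the contracting region despite $\phi'$ vanishing at $\pm1$) that the paper only sketches. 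Both hinge on the same structural facts, namely \eref{g} plus $\phi'>0$ giving normal attraction, and $U(x)$ staying in a compact subinterval of $(-1,+1)$ so that $w^*$ is $C^1$ and bounded away from the layer edges.
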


\begin{proof}
In Appendix \ref{sec:sproof}.
\end{proof}

The two theorems are illustrated in \fref{fig:ut1}, where (\ref{exutkin}) is simulated using hysteresis or smoothing to determine the sliding dynamics.
\begin{figure}[h!]\centering\includegraphics[width=0.95\textwidth]{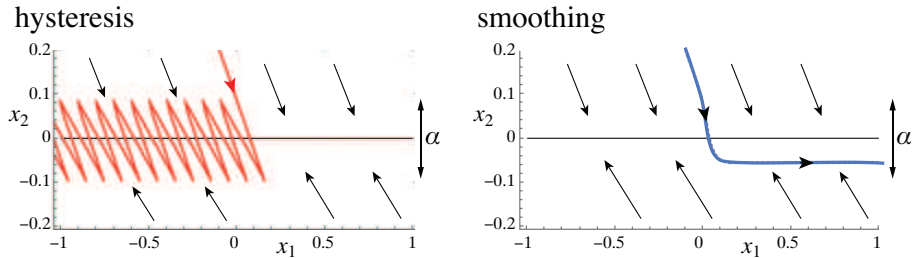}
\vspace{-0.3cm}\caption{\footnotesize\sf Sliding dynamics simulated using hysteresis or smoothing, applied to the example \eref{exutkin}.
The two figures agree outside the regularization strip ($|y|>\alpha$),
but give opposing solutions inside. As $\alpha\rightarrow 0$ these tend to \fref{fig:ut0}.}\label{fig:ut1}\end{figure}

Hence the tautology that is insufficiently acknowledged in the literature on nonsmooth systems:
it seems that in this problem, 
forming more rigorous models only serves to reinforce the case for each method from a different point of view, without clarifying the physical situations under which each applies.
To resolve the contradiction we require a single unified model capable of exhibiting both behaviours in different limits.
We define a system with two parameters $\epsilon$ and $\alpha$ that give us control over the smoothness and hysteresis in one model, and we are then able to show that one behaviour or the other applies, but in distinct limits.
To ``smooth'' hysteresis requires that we embed the system in a higher dimension.
The embedded system should have steady states $u={\rm sign}(y)$ to which the system collapses on a timescale $\ord\al$,
and between which the system transitions over a distance $|y|=\ord\alpha$.



\section{Regularization by embedding and singular perturbation}\label{sec:u}

We can express the hysteretic problem formed by \eref{Ns} with \eref{hyst} as a differential-algebraic system
\begin{equation}\label{u0}\begin{array}{rll}
\dot x&=&f\bb{x,y;u}\;,\\
\dot y&=&g\bb{x,y;u}\;,\\
0&=&\Phi\bb{y+\alpha u}-u\;,
\end{array}\end{equation}
where $\al \ge 0$ and $\Phi$ is a set-valued step function defined as
\begin{equation}\label{step}
\Phi(z)\in\left\{\begin{array}{lll}{\rm sign}(z)&{\rm if}&z\neq0\;,\\\sq{-1,+1}\; &{\rm if}&z=0\;.\end{array}\right.
\end{equation}
This embeds the $u$-parameterized problem in variables $(x,y)$, inside a surface $u=\Phi(y+\alpha u)$ in the higher dimensional space of variables $(x,y,u)$.
The surface consists of two half-planes, $u=+1$ for $y+\alpha >0$ and $u=-1$ for $y-\alpha<0$, which are consistent with \eref{Ns}-\eref{hyst0} when $\alpha=0$.
These half-planes are connected by a plane segment on which $u=-y/\alpha$ and $|u|<1$, which is consistent with the condition $u\in[-1,1]$ from \eref{hyst}.
Hysteresis manifests as a relaxation between the half-planes $u=1, \, y\ge -\alpha$ and $u=-1, \, y\le \alpha$.

This suggests considering a singular perturbation of \eref{u0},
\begin{equation}\label{slow}
\begin{array}{rll}
\dot x&=&f\bb{x,y;u}\;,\\
\dot y&=&g\bb{x,y;u}\;,\\
\eps\dot u&=&\phi\bb{\frac{y+\alpha u}\eps}-u\;,
\end{array}\end{equation}
where $\phi$ is a smooth function with the form \eref{phi} and $\eps>0$ is a small parameter.
Because by \eref{phi}
\begin{equation}
\begin{array}{lll}
{\displaystyle\lim_{\eps\rightarrow0}}\phi\bb{\frac{y+\alpha u}\eps}
&\!\!\!\!\in\!\!\!\!&{\displaystyle\lim_{\eps\rightarrow0}}\left\{\begin{array}{lcl}{\rm sign}(y+\alpha u)&\rm if&|y+\alpha u|>\eps\\\sq{-1,+1}&\rm if&|y+\alpha u|\le\eps\end{array}\right\}\medskip\nonumber\\
&\!\!\!\!=\!\!\!\!&\left\{\begin{array}{lll}{\rm sign}(y+\alpha u)&\rm if&|y+\alpha u|>0\\\sq{-1,+1}&\rm if&|y+\alpha u|=0\end{array}\right\}=\Phi(y+\alpha u),
\end{array}
\end{equation}
for $\eps=0$ the system \eref{slow} is formally equivalent to the system \eref{u0}, and hence to the system \eref{Ns} with \eref{hyst},
and moreover is formally equivalent to the system \eref{Ns}-\eref{hyst0} in the limit $\alpha=0$.
A proper justification of these statements if given in the following sections.

We have two timescales in \eref{slow}, a slow scale $t$ and a fast scale $t/\eps$ assuming $0\le\eps\ll1$.
The idea is that \eref{slow} is a regularization of \eref{Ns}-\eref{hyst0}, meaning it forms a well-defined problem everywhere including at the
discontinuity and formally agrees with \eref{Ns}-\eref{hyst0} for $y\neq0$ in the limit $\alpha,\eps\rightarrow0$.
This is achieved here by embedding the $(x,y)$ problem with a parameter $u$, in the higher dimensional space $(x,y,u)$,
where $u$ is now a fast variable that relaxes quickly to $u=\pm1$.


We will see in the following sections that
the manifold $u=\phi\bb{\frac{y+\alpha u}\eps}$ takes different shapes for $\alpha$ positive or negative, shown in \fref{fig:embedeps}.
The main results of this paper are Theorems \ref{thm:filalpha} and \ref{thm:utkinalpha} in the next section, which prove that the dynamics of \eref{slow} agrees
either with Definition \ref{def:fil} or Definition \ref{def:utkin} depending of the sign of $\alpha$, for certain parameter restrictions and
up to certain errors which we will derive.

\begin{figure}[h!]\centering\includegraphics[width=0.95\textwidth]{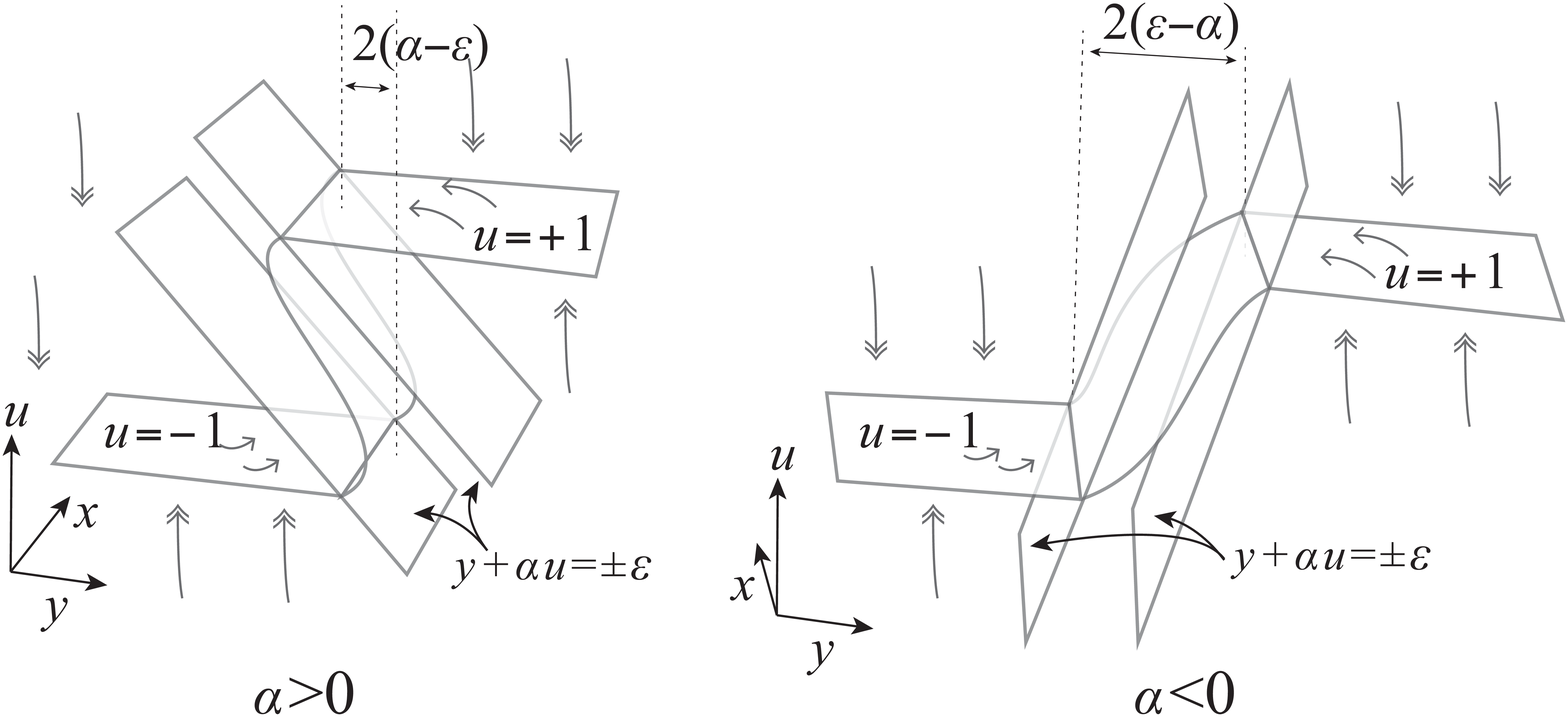}
\vspace{-0.3cm}\caption{\footnotesize\sf A schematic showing key features of the system \eref{slow}.
The surface shown is $u=\phi\bb{\frac{y+\alpha u}\eps}$.}\label{fig:embedeps}\end{figure}

\subsection{Preparatory steps for the theorems}\label{sec:prep}

To properly understand these behaviours for $\eps$ and $\alpha$ small but non-vanishing, let us take a closer look at the multiple
timescale dynamics of the model \eref{slow} from the view of singular perturbation theory.

The ratio of small quantities
\begin{equation}\label{kappa}
\kappa\equiv\eps/\alpha\;,
\end{equation}
will feature in the singular perturbation analysis,
and we assume
\begin{equation}
0<\eps\ll|\alpha|\ll1
\end{equation}
which implies $0<|\kappa|\ll1$.
This is a natural assumption because the relaxation is faster than the switching (which models a ``fast change'' in $u$).


The following theorem relates the solutions of \eref{slow} to those of \eref{xfilippov} of $\alpha>0$.

\begin{theorem}\label{thm:filalpha}
Fix $T >0$, consider $x_F(t)$ the solution of the Filippov System \eqref{xfilippov} in $\Sigma$,
and assume that $|x_F(t)| <M$ for $0\le t\le T$ where $M$ is given in \eqref{g}.
Then there exist constants $C>0$, $L>0$,
$\alpha _0>0$ such that, for any $0<\al\le \alpha _0$,
if we take $0<\kappa< \frac{1}{4}$ and $\dd$ satisfying
$$
2\mathrm{e}^{-\frac{1}{2\kappa\, C}}<\dd \le \kappa\alpha _0,
$$
then the solution $(x(t),y(t),u(t))$ of the system \eref{slow} with $(x(0),y(0),u(0))=(x_0,y_0,u_0)$ such that $|x_0|<M$, $|y_0|<\alpha$ and
$||u_0|-1|<\dd$,
satisfies for all $t\in [0,T]$,
$$
|x(t)-x_F(t)|< L(\kappa+ \frac{ \dd} {\kappa}+ \kappa \abs{\log{\frac{\dd}{2}}}+\al ), \quad |y(t)|<\alpha\;.
$$
\end{theorem}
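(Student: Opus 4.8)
The plan is to exploit the singular-perturbation structure of \eref{slow} using Fenichel-type arguments, tracking the solution through three phases: an initial fast relaxation of $u$ toward the slow manifold, a passage governed by the slow flow along the critical manifold $\{u=\phi((y+\alpha u)/\eps)\}$, and the identification of this slow flow with Filippov's sliding vector field \eref{xfilippov}. First I would introduce the fast time $\tau=t/\eps$ and study the layer problem $u'=\phi((y+\alpha u)/\eps)-u$ with $(x,y)$ frozen; since $|y_0|<\alpha$, the relevant branch of the critical manifold is the sloped segment, where $u\approx -y/\alpha$ lies in $(-1,1)$. Writing $\kappa=\eps/\alpha$, the right-hand side becomes $\phi(\kappa^{-1}(y/\alpha+u))-u$, and because $\phi'>0$ on $(-1,1)$ while $\kappa$ is small, the map $u\mapsto\phi(\kappa^{-1}(y/\alpha+u))$ is a strong contraction toward the fixed point; this produces exponential attraction to an $\ord{\kappa}$-neighborhood of the slow manifold, with the $\kappa\abs{\log(\dd/2)}$ term arising as the time (in slow units) needed for the initial $\dd$-sized deviation in $u$ to decay below $\ord{\kappa}$, and the lower bound $2\mathrm{e}^{-1/(2\kappa C)}<\dd$ ensuring this entry time is short enough not to spoil the $\ord{T}$ estimate.

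Next I would set up the slow manifold rigorously. On the sloped segment, $\phi(\kappa^{-1}(y/\alpha+u))=u$ defines, via the implicit function theorem (using $\phi'>0$ and smallness of $\kappa$), a function $u=h_\eps(x,y)$ with $h_\eps(x,y)=-y/\alpha+\ord{\kappa}$ — more precisely $h_\eps=-y/\alpha+\kappa\,\psi(u)+\dots$ where $\psi=\phi^{-1}$ on $(-1,1)$. Restricting \eref{slow} to this manifold gives the reduced slow system
\begin{equation}\label{reduced-fil}
\dot x = f(x,y;h_\eps(x,y)), \qquad \dot y = g(x,y;h_\eps(x,y)).
\end{equation}
I would then show the $y$-dynamics on this manifold is itself fast-attracting to $y=0$: at $y=0$ we have $u=h_\eps(x,0)=\ord{\kappa}$, so $g(x,0;h_\eps)\approx g(x,0;0)$, and by \eref{g} the sign structure $g^+<0<g^-$ together with $\partial g/\partial u<0$ forces $\dot y$ to point toward $y=0$ throughout $|y|<\alpha$; a Gronwall estimate then confines $y(t)$ to $\ord{\alpha}$ and in fact to an $\ord{\alpha\kappa}$ or smaller neighborhood after the transient, yielding $|y(t)|<\alpha$ for all $t\in[0,T]$ as claimed. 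Substituting $y=\ord{\alpha}$-small and $u=-y/\alpha+\ord{\kappa}$ into $\dot x=f$, and comparing with Filippov: on $y=0$, Filippov's $\lambda=\Lambda(x)$ and Utkin's $u$ coincide with the linear-in-$u$ reduction precisely because the slope $u=-y/\alpha$ sweeps linearly through $[-1,1]$ as $y$ sweeps through $[-\alpha,\alpha]$; averaging/consistency forces the effective sliding vector field to be the linear convex combination \eref{xfilippov}, not the nonlinear \eref{xutkin}. This is where the sign of $\alpha$ enters — for $\alpha>0$ the sloped segment has negative slope in $u$ versus $y$, and the connecting dynamics realizes Filippov's linear combination.

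Finally I would assemble the error bound by the triangle inequality: $|x(t)-x_F(t)|$ is controlled by (i) the $\ord{\kappa\abs{\log(\dd/2)}}$ cost of the initial transient before $x$ begins shadowing the slow flow, (ii) the $\ord{\dd/\kappa}$ slippage in $x$ accumulated while $u$ is still $\dd$-far from the manifold (the $1/\kappa$ being the inverse relaxation rate), (iii) the $\ord{\kappa}$ discrepancy between the reduced vector field $f(x,0;h_\eps(x,0))$ and Filippov's $f_F$ coming from $h_\eps=\ord{\kappa}$ rather than exactly on the $y/\alpha$-line, and (iv) the $\ord{\alpha}$ error from $y(t)$ being $\ord{\alpha}$ rather than exactly $0$ in the argument of $f$. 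Gronwall's lemma applied to $\frac{d}{dt}|x-x_F|\le \mathrm{Lip}(f_F)\,|x-x_F| + (\text{these }\ord{\cdot}\text{ terms})$ on $[0,T]$, with $T$ fixed, converts the instantaneous errors into the stated bound $L(\kappa+\dd/\kappa+\kappa\abs{\log(\dd/2)}+\alpha)$; the constant $L$ absorbs $\mathrm{e}^{\mathrm{Lip}\cdot T}$ and the Lipschitz constants of $f,g,\phi^{-1}$. The main obstacle I anticipate is the \emph{uniformity} of the Fenichel/slow-manifold reduction down to $\eps=0$ and across the full sloped segment $|u|<1$ where $\phi'$ may degenerate near $|u|=1$: one must either assume $\phi'$ bounded below on a slightly smaller interval and show the trajectory's $u$-component never reaches the bad endpoints, or handle the endpoints via a separate matching argument; controlling the interplay of the two small parameters $\eps\ll\alpha$ (so that $\kappa\to 0$) while keeping all estimates explicit in $\kappa,\dd,\alpha$ — rather than merely $\ord{1}$ — is the delicate bookkeeping that the appendix proof will have to carry out carefully.
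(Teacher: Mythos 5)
There is a genuine gap, and it is the central mechanism: for $\alpha>0$ the sloped middle branch of $u=\phi\bb{\frac{y+\alpha u}{\eps}}$ is normally \emph{repelling}, not attracting. Linearizing the fast equation $\eps\dot u=\phi\bb{\frac{y+\alpha u}{\eps}}-u$ in $u$ near that branch gives $\frac{\alpha}{\eps}\phi'-1=\frac{\phi'}{\kappa}-1>0$ wherever $\phi'$ is of order one, so with $\kappa$ small the map you describe is a strong \emph{expansion}, not a contraction; your claim of exponential attraction to an $\ord\kappa$-neighborhood of $u\approx-y/\alpha$ is exactly backwards (the sign of $\alpha$ is what flips this, and attraction to the middle branch is precisely the $\alpha<0$ regime of Theorem~\ref{thm:utkinalpha}). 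Consequently there is no attracting Fenichel manifold to reduce to, and the reduction \eqref{reduced-fil} cannot be the basis of the proof. Worse, if that manifold were attracting and you followed your own reduced system $\dot y=g(x,y;h_\eps(x,y))$ with $h_\eps\approx-y/\alpha$, its rest point is not $y=0$ but $y=-\alpha U(x)$ where $g(x,0;U(x))=0$, so the limiting $x$-dynamics would be Utkin's $f\bb{x,0;U(x)}$ of \eref{xutkin}, not Filippov's \eref{xfilippov}; the step where ``averaging/consistency forces the linear convex combination'' has no mechanism behind it, because on a genuine slow manifold nothing is being averaged. In the paper the set $\op Q$ (where $g=0$ on the regularized graph) is explicitly a repeller for $\alpha>0$ and plays no dynamical role in this theorem.

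What actually happens for $\alpha>0$ is a relaxation/hysteresis oscillation: solutions are attracted to $u\approx+1$, drift down in $y$ until $y+\alpha u$ changes sign near $y=-\alpha$, jump on the fast timescale to $u\approx-1$, drift up to $y=+\alpha$, jump back, and so on; Filippov's field emerges as the \emph{average} drift in $x$ over one loop. The paper's proof (Appendix~\ref{sec:fproof}) builds a positively invariant annulus $\bf A$ around this loop (this is where the hypothesis $2\mathrm{e}^{-1/(2\kappa C)}<\dd$ enters, guaranteeing $C\kappa\abs{\log\frac{\dd}{2}}<1$ so the inner boundary is well defined and each fast transition lands back $\dd$-close to $u=\pm1$), defines a Poincar\'e return map on a section $\Sigma_0$, computes per cycle $T^1=2\al+\frac{2\al}{g(x_0,0,-1)}+\OO\bb{\kappa\al,\frac{\dd}{\kappa}\al,\kappa\al\log\frac{\dd}{2},\al^2}$ and $x(T^1)-x_0=2\al\frac{f(x_0,0,-1)}{g(x_0,0,-1)}+\OO(\cdots)$, and then iterates $\OO(1/\al)$ cycles, comparing with $x_F$ cycle-by-cycle as in the Euler-type error analysis of Theorem~\ref{propfil}. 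Your error bookkeeping in the final paragraph (Gronwall, triangle inequality, interpretation of the $\kappa$, $\dd/\kappa$, $\kappa\abs{\log\frac{\dd}{2}}$, $\al$ terms) would be salvageable, but only after replacing the slow-manifold reduction by this loop-averaging/return-map structure, which is the idea your proposal is missing.
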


Taking $\kappa = \al$ and $\dd= \alpha ^2$ one has the following:
\begin{corollary}\label{cor:filalpha}
Fix $T >0$, consider  $x_F(t)$ the solution of the Filippov System \eqref{xfilippov} in $\Sigma$,
and assume that $|x_F(t)| <M$ for $0\le t\le T$ where $M$ is given in \eqref{g}.
Then there exist constants $C>0$, $L>0$, $\al_0 >0$ such that, for $0<\al\le \al _0$ small enough,
the solution $(x(t),y(t),u(t))$ of the system \eref{slow} where $\varepsilon=\alpha ^2$,
with $(x(0),y(0),u(0))=(x_0,y_0,u_0)$ such that $|x_0|<M$, $|y_0|<\alpha$ and
$||u_0|-1|<\al ^2$, satisfies for all $t\in [0,T]$
$$
|x(t)-x_F(t)|< L\al\abs{\log{\frac{\al}{2}}}, \quad |y(t)|<\alpha.
$$
\end{corollary}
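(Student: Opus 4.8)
The plan is to derive Corollary~\ref{cor:filalpha} as a direct specialization of Theorem~\ref{thm:filalpha} under the scaling $\eps=\al^2$, equivalently $\kappa=\eps/\al=\al$, together with $\dd=\al^2$. All of the dynamical content already lives in the theorem, so the work reduces to (i) checking that this choice of $(\kappa,\dd)$ meets the admissibility constraints of Theorem~\ref{thm:filalpha} once $\al$ is small, and (ii) simplifying the resulting error estimate to the stated form.

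First I would fix the constants $C,L,\al_0$ produced by Theorem~\ref{thm:filalpha} and, shrinking $\al_0$ if necessary, also require $\al_0<\tfrac{1}{4}$, $\al_0<\tfrac{1}{\sqrt 2}$ and $\al_0\le 2/\mathrm e$. Then for any $0<\al\le\al_0$ the hypothesis $0<\kappa<\tfrac{1}{4}$ of the theorem holds with $\kappa=\al$, and the initial-data conditions transfer verbatim, since $|x_0|<M$, $|y_0|<\al$ and $||u_0|-1|<\al^2=\dd$. It remains to verify the two-sided constraint $2\mathrm e^{-1/(2\kappa C)}<\dd\le\kappa\al_0$. The upper bound reads $\al^2\le\al\,\al_0$, i.e. $\al\le\al_0$, which holds. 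The lower bound reads $2\mathrm e^{-1/(2\al C)}<\al^2$; taking logarithms this is $\frac{1}{2\al C}>\log 2+2\log(1/\al)$, and since the left side grows like $1/\al$ while the right grows only like $\log(1/\al)$ as $\al\to0^+$, the inequality is satisfied after shrinking $\al_0$ once more. This exponential-versus-polynomial comparison is the one genuine (though routine) point at which the specific choice $\eps=\al^2$ is actually used.

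With the hypotheses in place, Theorem~\ref{thm:filalpha} gives $|y(t)|<\al$ and, for $0\le t\le T$,
$$
|x(t)-x_F(t)| < L\bb{\kappa+\frac{\dd}{\kappa}+\kappa\abs{\log\frac{\dd}{2}}+\al}
= L\bb{\al+\al+\al\abs{\log\frac{\al^2}{2}}+\al}\;.
$$
Finally I would estimate the logarithmic term: for $0<\al<1/\sqrt 2$ we have $\abs{\log(\al^2/2)}=\log(2/\al^2)=\log 2+2\log(1/\al)\le 2\log(2/\al)=2\abs{\log(\al/2)}$, and since $\abs{\log(\al/2)}\ge 1$ once $\al\le 2/\mathrm e$, each of the three linear-in-$\al$ contributions is dominated by $\al\abs{\log(\al/2)}$. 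Collecting terms yields $|x(t)-x_F(t)|<5L\,\al\abs{\log(\al/2)}$, and after relabelling $L$ this is precisely the claimed bound. There is no substantive obstacle beyond this bookkeeping; the only delicate step, already noted, is ensuring the exponentially small threshold $2\mathrm e^{-1/(2\al C)}$ stays below $\al^2$, which it does comfortably for small $\al$.
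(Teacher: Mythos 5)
Your proposal is correct and follows exactly the paper's route: the paper obtains the corollary by specializing Theorem \ref{thm:filalpha} with $\kappa=\al$ and $\dd=\al^2$ (i.e. $\eps=\al^2$), which is precisely what you do. Your additional bookkeeping — checking $2\mathrm e^{-1/(2\al C)}<\al^2$ for small $\al$ and absorbing the $\OO(\al)$ terms into $\al\abs{\log(\al/2)}$ — is the implicit content the paper leaves to the reader, and it is carried out correctly.
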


The results of Theorem \ref{thm:filalpha} and Corollary \ref{cor:filalpha} jointly with Theorem \ref{propfil} imply that the solutions
of \eref{slow} lie $\al\log\al$ close to those of the hysteretic system \eref{Ns} with \eref{hyst0}.
More precisely, if we take $x_h(t)$ the hysteretic solution given by \eref{uta}, then $x(t)$ in
Theorem \ref{thm:filalpha} satisfies $|y(t)|<\alpha$ and
$$
|x(t)-x_h(t)|<-L \al\log\al \quad \mbox{for all} \  t\in [0,T].
$$

The next theorem relates the solutions of system \eqref{slow} with those of \eqref{xutkin} when $\alpha <0$.

\begin{theorem}\label{thm:utkinalpha}
Take $\al <0$. Fix $T >0$,  $x_U(t)$ consider the solution of the Utkin's equivalent control \eqref{xutkin} in $\Sigma$,
and assume that $|x_U(t)| <M$ for $0\le t\le T$ where $M$ is given in \eqref{g}.
Then there exists $\alpha _0>0$ such that if we take $\dd>0$ and $\kappa<0$ satisfying
$$
0<\dd \le |\kappa|\al_0,
$$
there exists a constant $L>0$ such that,
for any $0<|\al|\le \al_0$,
then the solution $(x(t),y(t),u(t))$ of \eref{slow} with $(x(0),y(0),u(0))=(x_0,y_0,u_0)$ such that
$x_0=x_U(0)$, $|y_0|<|\al|$ and $||u_0|-1|<\dd$,
satisfies for all $t\in\bb{0,T}$:
$$
|x(t)-x_U(t)|<L |\al|, \quad |y(t)|<L |\al|.
$$
\end{theorem}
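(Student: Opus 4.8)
The plan is to analyze the slow--fast system \eref{slow} in the case $\alpha<0$ using geometric singular perturbation theory, exploiting the fact that for $\alpha<0$ the critical manifold $u=\phi((y+\alpha u)/\eps)$ becomes, in the singular limit, a graph $u = U_0(y)$ over which the slow dynamics is exactly Utkin's reduced system. First I would identify the critical manifold $\mathcal{M}_0 = \{\phi((y+\alpha u)/\eps)-u = 0\}$ in the limit $\eps\to 0$: writing $z = (y+\alpha u)/\eps$, the graph splits into the two branches $u=\pm 1$ (where $|y+\alpha u|\gg\eps$) joined by a transition region where $u=\phi(z)$; crucially, because $\alpha<0$ the implicit relation $u-\phi((y+\alpha u)/\eps)=0$ has a \emph{single-valued} solution $u$ as a function of $y$ (the map $u\mapsto u - \phi((y+\alpha u)/\eps)$ is strictly increasing when $\alpha<0$ since $\phi'\ge 0$), in contrast to the multivalued hysteretic picture for $\alpha>0$. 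I would check normal hyperbolicity: the $u$-derivative of the fast right-hand side $\tfrac{\alpha}{\eps}\phi' - 1$ is bounded away from zero (negative) on the relevant region, so the critical manifold is attracting and Fenichel's theorem applies, giving an invariant slow manifold $\mathcal{M}_\eps$ that is $\ord{\eps}$-close to $\mathcal{M}_0$ and attracts nearby trajectories at an exponential rate in $t/\eps$.

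Next I would set up the two phases of the trajectory. \textbf{Phase 1 (fast attraction):} starting from $(x_0,y_0,u_0)$ with $|y_0|<|\alpha|$ and $||u_0|-1|<\dd$, the fast subsystem $\eps\dot u = \phi((y+\alpha u)/\eps)-u$ contracts $u$ onto $\mathcal{M}_\eps$ in time $\ord{\eps|\log\dd|}$ or so, during which $x$ and $y$ move by at most $\ord{\eps|\log\dd|}$, which is $\ord{|\alpha|}$ under the standing assumption $\eps\ll|\alpha|$ and the constraint $\dd\le|\kappa||\alpha_0|$. \textbf{Phase 2 (slow drift):} once on $\mathcal{M}_\eps$, substitute $u = U_\eps(x,y) = U_0(y)+\ord{\eps}$ into the $(x,y)$ equations; I expect to show that on $\mathcal{M}_\eps$, $\dot y = g(x,y;U_\eps(x,y))$, and that the reduced flow has $y$ attracted to a value where $g(x,0;U(x)) + \ord{|\alpha|,\eps}= 0$, i.e. $y$ stays $\ord{|\alpha|}$ small, while $\dot x = f(x,y;U_\eps) = f_U(x) + \ord{|\alpha|,\eps}$. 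A Gronwall estimate comparing this to the Utkin solution $x_U(t)$ of \eref{xutkin} over $[0,T]$ then yields $|x(t)-x_U(t)| < L|\alpha|$, with $M$-bound on $x_F$ — here $x_U$ — ensuring we stay in the region where \eref{g} holds throughout. The bound $|y(t)|<L|\alpha|$ comes from the fact that the only points of $\mathcal{M}_\eps$ consistent with $\dot y=0$ lie in $|y| = \ord{|\alpha|}$ (since $U_0(y)$ differs from $U(x)$ by an amount controlled by $\alpha$, because $y+\alpha u$ must be $\ord{\eps}$ on the transition part and $|\alpha u|\le|\alpha|$).

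The technical core — and the step I expect to be the main obstacle — is getting the reduced dynamics on $\mathcal{M}_\eps$ expressed cleanly enough to extract the $\ord{|\alpha|}$ error, and in particular controlling $U_\eps$ on the transition segment where $\phi$ is genuinely nonlinear. On the branches $u=\pm1$ the reduced system is just $\dot x = f^\pm$, $\dot y=g^\pm$, which by \eref{g} pushes $y$ monotonically toward the transition region, so the interesting dynamics is entirely on the middle piece. There, $\mathcal{M}_\eps$ is parameterized by $z=(y+\alpha u)/\eps$ with $u=\phi(z)+\ord{\eps}$ and $y = \eps z - \alpha\phi(z)+\ord{\eps}$; one must verify that as $z$ ranges over its $\ord{1}$ interval, $y$ ranges over an interval of length $\ord{|\alpha|}$ (using $\eps\ll|\alpha|$), and that the value of $u$ selected by $\dot y = 0$ is $U(x) + \ord{|\alpha|}$ — this is where the sign of $\alpha$ matters decisively, since for $\alpha<0$ this selection is the nonlinear (Utkin) one whereas for $\alpha>0$ the relaxation oscillation around the fold yields the linear (Filippov) average instead. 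Once this local analysis is in hand, patching it to the outer branches via the entry/exit behavior of Fenichel coordinates and then closing with Gronwall is routine, and I would defer those computations to the appendix in parallel with the proofs of Theorems \ref{thm:filalpha} and \ref{propfil}.
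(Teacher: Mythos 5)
Your overall strategy (an attracting slow manifold carrying Utkin's dynamics, an attraction phase, then a Gronwall comparison) has the right shape, but the key step as you set it up has a genuine gap. You propose to apply Fenichel's theorem with $\eps$ as the singular parameter to $\eps\dot u=\phi\bb{\frac{y+\alpha u}{\eps}}-u$, with slow variables $(x,y)$ and critical manifold $u=\phi\bb{\frac{y+\alpha u}{\eps}}$. This is not a standard Fenichel setting: the right-hand side depends on $\eps$ singularly (its $(y,u)$-derivatives are of order $1/\eps$), the ``critical manifold'' is itself $\eps$-dependent, and in the limit $\eps\to0$ it degenerates to the graph of the step relation $u=\Phi(y+\alpha u)$, which for $\alpha<0$ is only piecewise smooth, with corners at $(y,u)=(\mp|\alpha|,\pm1)$. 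Those corners are exactly what your trajectory must traverse: the initial data $||u_0|-1|<\dd$, $|y_0|<|\al|$ start on the outer branches, at $O(1)$ distance (in the fast variables) from the Utkin locus, and the passage from the outer branches into the middle branch, which you defer as routine ``patching of Fenichel coordinates'', is precisely where the non-uniformity lives. Moreover, even granting the two-dimensional slow manifold, your Phase 2 hides a second, nested singular perturbation: on the middle piece $u\approx y/|\alpha|$, so $\dot y=g(x,y;U_\eps(x,y))$ relaxes at rate $O(1/|\alpha|)$, and the claim that $y$ is attracted to where $g\approx0$ and stays $O(|\alpha|)$ is itself a reduction that needs an argument (not just a substitution) before Gronwall can be run. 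As written, the appeal to Fenichel and the corner passage would fail, or at least are unjustified.

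The paper's proof avoids both difficulties by a different decomposition: it rescales $v=y/|\alpha|$ and keeps $\kappa=\eps/\alpha$ fixed and bounded away from zero, so that in \eref{bslow} the nonlinearity $\phi\bb{\frac{v-u}{|\kappa|}}$ is a fixed smooth function and $\alpha$ is the only singular parameter, with \emph{two} fast variables $(v,u)$ and one slow variable $x$. The critical set is then the one-dimensional curve $\op Q=\cc{g(x,0;u)=0,\ u=\phi\bb{\frac{v-u}{|\kappa|}}}$, which carries exactly Utkin's dynamics; normal hyperbolicity is checked through the determinant and trace of the $(v,u)$-Jacobian (using \eref{g} and $\phi'>0$), so Fenichel applies directly and yields an invariant curve $\op Q^\alpha$ at distance $O(\alpha)$. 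Because Fenichel attraction is only local while the initial conditions sit near $u=\pm1$, the paper additionally constructs an explicit positively invariant block $\mathbf{B}$ whose boundary the flow crosses inward except at $x=\pm M$; this is what gives the global statement that such orbits are funneled onto $\op Q^\alpha$ after an $O(\al)$ excursion in $x$, from which the estimates $|x(t)-x_U(t)|<L|\al|$, $|y(t)|<L|\al|$ follow. To repair your version you would need either this rescaling (so that Fenichel is applied to a genuinely smooth, $\eps$-independent nonlinearity) or hands-on estimates near the corner region together with a justification of the second ($y$-relaxation) reduction and a global trapping argument replacing the block $\mathbf{B}$.
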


The results of Theorem \ref{thm:utkinalpha} jointly with Theorem \ref{proput} imply that the solutions
of \eref{slow} lie $\al$ close to those of the smoothing of system \eref{Ns} with \eref{hyst0}.


The proofs of these Theorems are given in the Appendix, as they are in principle rather simple (a matter of showing that solutions are confined either to the neighbourhood of a hysteretic loop or a slow manifold), but in practice are lengthy.
To give an intuitive picture of the dynamics of system \eref{slow} for illustration in \sref{sec:epsns}.

The different orders of approximation between the methods using hysteresis, which is of order $\al|\log \al|$ (from corollary \ref{cor:filalpha}),
or smoothing, which of order $\al$ (from Theorem \ref{thm:utkinalpha}), show their quite different nature.
To have the hysteretic process under control we must ensure that the solution returns sufficently near the manifolds $u=\pm 1$ in each of the $\OO(1 / \alpha)$ hysteresis loops, while in the smoothing process we only need to ensure that solutions reach a certain neighborhood (of the surface $C_0$ described in the next section, or more precisely of the curve $Q$ described in \sref{sec:uproof}) where it is no longer able to escape.



\subsection{A sketch of the $\eps\rightarrow0$ nonsmooth limit}\label{sec:epsns}

Too supplement these results and form a picture of the dynamics, let us explore the system \eref{slow} in the $\eps\rightarrow0$ limit a little more closely, verifying that it fits intuitively with the discontinuous system \eref{Ns} using \eqref{xfilippov} or \eqref{xutkin}.

Letting $\eps\rightarrow0$ in \eref{slow} gives the slow subsystem \eref{u0} on the timescale $t$,
which is discontinuous because $\Phi(z)=\lim_{\eps\rightarrow0}\phi(z/\eps)$ is the step function \eref{step}.
In the space of $(x,y,u)$ this system occupies a surface $\op C$ on which the condition $u=\Phi(y+\alpha u)$ is satisfied.
Expressing this as a graph,
\begin{equation}\label{Cz}
\op C=\cc{(x,y,u)\;:\;u=\mu(y;\alpha)}\;,
\end{equation}
where
\begin{equation}\label{Cmu}
\mu(y;\alpha)=\left\{
\begin{array}{lll}
+1 & \rm if&y\ge -\alpha\;,\\
-y/\alpha&\rm if & |y|\le \alpha\;,\\
-1 & \rm if & y\le +\alpha\;.
\end{array}\right.
\end{equation}
The surface $\op C$ has three branches, two half hyperplanes
\begin{equation}
\begin{array}{rll}
&&\op C_+=\cc{(x,y,u):\;x\in\mathbb R^{n-1},\;y+\alpha\ge0,\;u=+1}\;,\\
&&\op C_-=\cc{(x,y,u):\;x\in\mathbb R^{n-1},\;y-\alpha\le0,\;u=-1}\;,\end{array}
\end{equation}
connected by a hyperplane segment
$$\qquad\qquad\op C_0=\cc{(x,y,u):\;x\in\mathbb R^{n-1},\;y+\alpha u=0,\;u\in[-1,1]},$$
as depicted in \fref{fig:embed0}.
Thus on $\op C=\op C_+\cup\op C_0\cup\op C_-$ the dynamics of \eref{u0} becomes
\begin{equation}\label{slowC}
\begin{array}{rll}
\dot x&=&f\bb{x,y;\mu(y;\alpha)}\;,\\
\dot y&=&g\bb{x,y;\mu(y;\alpha)}\;.
\end{array}
\end{equation}

\begin{figure}[h!]\centering\includegraphics[width=0.95\textwidth]{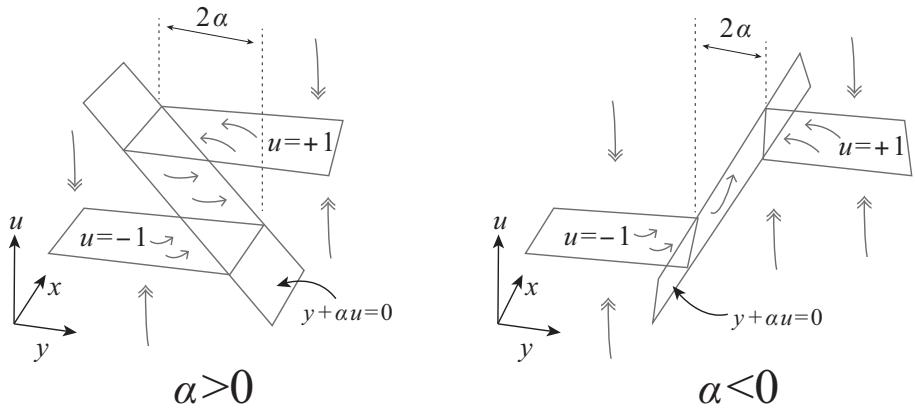}
\vspace{-0.3cm}\caption{\footnotesize\sf Slow dynamics (single arrows) in the surface $u=\Phi(y+\alpha u)$, comprised of subsets of the hyperplanes $u=+1$, $u=-1$,
and $y=-\alpha u$, with fast dynamics (double arrows) outside the surface.}\label{fig:embed0}\end{figure}
%

Denoting the derivative with respect to the fast timescale $t/\eps$ by a prime in \eref{slow} gives
\begin{equation}\label{nsfast}
\begin{array}{rll}x'&=&\eps f\bb{x,y;u}\;,\\
y'&=&\eps g\bb{x,y;u}\;,\\
u'&=&\phi\bb{\frac{y+\alpha u}\eps}-u\;,
\end{array}
\end{equation}
which for $\eps=0$ becomes the one dimensional system
\begin{equation}\label{nsfast0}
\begin{array}{rll}x'&=&0\;,\\
y'&=&0\;,\\
u'&=&\Phi\bb{y+\alpha u}-u\;.
\end{array}
\end{equation}
This induces relaxation towards the surfaces $\op C_\pm$ on the fast timescale, and is a discontinuous one-dimensional system expressible as
$$
u'=\Phi-u\;,\qquad
\Phi\in\left\{\begin{array}{lll}+1&\rm if&y+\alpha u \ge 0\;,\\\sq{-1,1}&\rm if&y+\alpha u=0\;,\\-1&\rm if&y+\alpha u\le0\;,\end{array}\right.
$$
where $y$ is a constant.

The
sets $\op C_\pm$ are therefore half-planes of equilibria of \eref{nsfast0}, where $u'=0$ and $u=\pm1$.
These surfaces are hyperbolically attracting since
$\left.\partial u'/\partial u\right|_{\op C_\pm}=-1$.

The set $\op C_0$ lies on a discontinuity surface of system  \eref{nsfast0} given by $y+\alpha u=0$, so unlike $\op C_\pm$ it is not a set of equilibria.
The value of $u'$ changes sign  across $\op C_0$, but does so discontinuously.
Considering the neighbourhood of $\op C_0$ for which $|y|<\alpha$, for $\alpha>0$ the derivative $u'$ jumps from $-1-u$ to $+1-u$ as $u$ goes from
$u<-y/\alpha$ to $u>-y/\alpha$, so $\op C_0$ is repelling (in finite time), while for $\alpha<0$ the derivative $u'$ jumps from $+1-u$ to $-1-u	$ as $u$ goes
from $u<-y/\alpha$ to $u>-y/\alpha$, so $\op C_0$ is attracting (in finite time).
The following picture of the dynamics then emerges (see \fref{fig:ut2}).


The slow dynamics on $\op C_+$ and $\op C_-$, given by \eref{slowC} with $\mu(y;\alpha)=1$ or $\mu(y;\alpha)=-1$ respectively,
is equivalent to the $u=\pm1$ dynamics of \eref{Ns}.
The surfaces $\op C_\pm$ are invariant except where they meet the switching surface $y+\alpha u=0$,
on two lines $L_1=\cc{(x,y,u):\;y+\alpha=0,u=+1}$ and $L_2=\cc{(x,y,u):\;y-\alpha=0,u=-1}$.
The slow dynamics on $\op C_0$, given by \eqref{slowC} with $u=-y/\alpha$,
is a smooth interpolation between the two systems in \eref{Ns}.

For $\alpha>0$, on the fast timescale, solutions of \eref{nsfast0} are repelled in finite time from the surface $\op C_0$,
and attracted asymptotically towards $\op C_\pm$.
On the line $L_1$ separating $\op C_+$ from $\op C_0$, the flow relaxes
towards the surface $\op C_-$ via the fast system \eref{nsfast0}. On the line $L_2$ separating $\op C_-$
from $\op C_0$, the flow relaxes towards the surface $\op C_+$ again via the fast system \eref{nsfast0}.
Thus the dynamics is consistent with \eref{Ns} using \eref{hyst} for $|y|>\alpha$, and for $|y|<\alpha$ the system jumps
between the slow dynamics on $u=+1$ and $u=-1$ hysteretically.

For $\alpha<0$,
on the fast timescale, solutions of \eref{nsfast0} are attracted asymptotically
towards $\op C_\pm$ and in {\it finite} time towards $\op C_0$.
Hence the surface $\op C=\op C_+\cup\op C_0\cup\op C_-$ is attractive, and, as the dynamics in $\op C_0$ is a regularization of system \eqref{Ns},
it is consistent with  \eqref{avcon} for $|y|<-\alpha$.

The two regimes are simulated in \fref{fig:ut2}.
\begin{figure}[h!]\centering\includegraphics[width=0.95\textwidth]{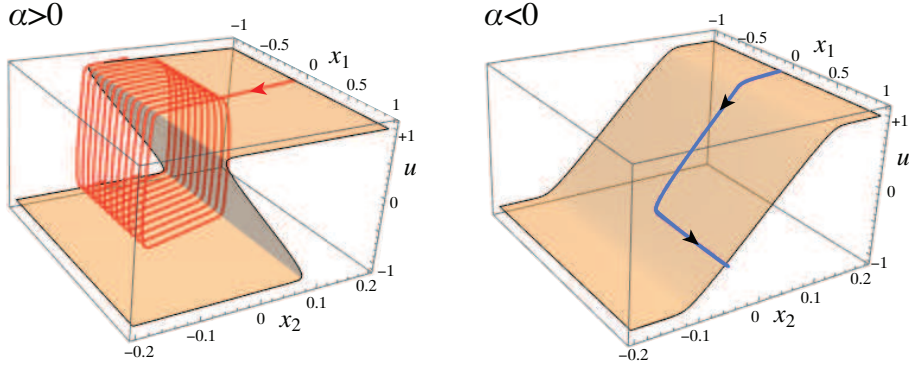}
\vspace{-0.3cm}\caption{\footnotesize\sf A simulation of \eref{slow} where the $(x,y)$ system is as given in \eref{exutkin}, for $\alpha=\pm 0.1$ and $\eps=0.01$, showing a typical trajectory, and the slow manifold $\Phi-u=0$.}\label{fig:ut2}
\end{figure}
%








\subsection{A final curiosity}\label{sec:isochrone}


We end with an interesting note concerning the curve
$$
\op Q=\cc{\;(x,y,u):\;g\bb{x,0;u}=0,\;u=\phi\bb{\mbox{$\frac{y+\al u}{\eps}$}}\;},
$$
where $\eps=\kappa \al$, on which \eref{Ns} becomes a one-dimensional system in $x$, following Utkin's dynamics.
In the proof to Theorem \ref{thm:utkinalpha} (see Appendix \ref{sec:uproof}), we find for $\alpha<0$ that the curve $\op Q$ plays a key role,
by creating an attracting invariant manifold where Utkin's dynamics occurs.

In the case $\alpha>0$ the  curve $\op Q$ is a repeller,
and  therefore it does not play any role in the hysteretic (Filippov) dynamics,
but as the following result shows, it does have topological significance.
\begin{lemma}
The fast isochrone.
Consider system  \eref{Ns}  where  and $u=\pm 1$.
There exists a curve $\cal I$ that is the isochrone of the regularization region boundaries $y=\pm \al$, meaning that the flow of \eref{Ns} with $u=-1$
takes an equal amount of time to reach  $y=+\al$ than the flow of \eref{Ns} with $u=+1$ needs to reach  $y=-\al$ from $\cal I$.
If $g$ is linear in $u$,
then the manifold $\cal I$ and the projection of $\op Q$ in the $(x,y)$ plane coincide up to $\ord{\al^2,\eps}$.
\end{lemma}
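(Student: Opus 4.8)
The plan is to realise $\cal I$ as a level set of a travel-time function, expand it in powers of $\al$, and match the leading term against the $(x,y)$-projection of $\op Q$. First I would introduce two travel times: for $(x_0,y_0)$ with $\abs{y_0}<\al$ and $x_0\in[-M,M]$ (the range of \eqref{g}), let $\tau^-(x_0,y_0)$ be the forward time the flow of \eqref{Ns} with $u=-1$ started at $(x_0,y_0)$ takes to reach $\{y=+\al\}$, and $\tau^+(x_0,y_0)$ the time the flow with $u=+1$ takes to reach $\{y=-\al\}$. By \eqref{g} one has $g(x,0;-1)>0>g(x,0;+1)$, so for $\al$ small $\dot y$ has a fixed sign along each orbit as long as it stays in the strip; hence each target is met transversally in finite time, $\tau^\pm$ are smooth by smooth dependence on initial data, $\partial_{y_0}\tau^-<0$ and $\partial_{y_0}\tau^+>0$. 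Setting $H:=\tau^--\tau^+$, I would note that $H<0$ on $\{y_0=\al\}$, $H>0$ on $\{y_0=-\al\}$, and $\partial_{y_0}H<0$, so for each $x_0$ there is a unique zero $y_0=I(x_0)$ with $\abs{I(x_0)}<\al$; the implicit function theorem then makes $\cal I=\{y=I(x)\}$ a smooth graph, and it has the asserted isochrone property by construction.

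Next I would expand $I$. Parametrising the orbits by $y$,
\[
\tau^-(x_0,y_0)=\int_{y_0}^{\al}\frac{dy}{g(x^-(y),y;-1)}\,,\qquad
\tau^+(x_0,y_0)=\int_{-\al}^{y_0}\frac{dy}{-g(x^+(y),y;+1)}\,,
\]
with $x^\mp(y)$ the orbit projections. Each orbit spends a time $\ord\al$ in the strip, so $x^\mp(y)-x_0=\ord\al$ and $y=\ord\al$ there; Taylor-expanding $g$ about $(x_0,0)$ and using that $g^\pm(x,0)$ is bounded away from $0$ uniformly on $[-M,M]$ gives $\tau^-=\frac{\al-y_0}{g^-(x_0,0)}+\ord{\al^2}$ and $\tau^+=\frac{y_0+\al}{-g^+(x_0,0)}+\ord{\al^2}$ with uniform errors. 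Equating and solving for $y_0$ — legitimate since its coefficient $\frac{1}{g^-(x_0,0)}+\frac{1}{-g^+(x_0,0)}$ is $\ord1$ and bounded below — would yield
\[
I(x)=\al\,\frac{g^+(x,0)+g^-(x,0)}{g^+(x,0)-g^-(x,0)}+\ord{\al^2}=\al\bigl(1-2\Lambda(x)\bigr)+\ord{\al^2}\,,
\]
with $\Lambda$ the Filippov weight of \eqref{xfilippov}. This uses no linearity, and shows that it is the convex-combination quantity $\Lambda$, not $U$, that controls the isochrone in general.

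Finally I would invoke linearity. With $g=c+du$ as in \eqref{lin}, so $g^\pm(x,0)=c(x,0)\pm d(x,0)$, a one-line computation gives $\frac{g^+(x,0)+g^-(x,0)}{g^+(x,0)-g^-(x,0)}=\frac{c(x,0)}{d(x,0)}=-U(x)$ by \eqref{uutkin}, hence $1-2\Lambda(x)=-U(x)$ and $I(x)=-\al U(x)+\ord{\al^2}$. For $\op Q$: the equation $g(x,0;u)=0$ forces $u=U(x)$, unique by \eqref{g}, with $\abs{U(x)}\le1-\de$ uniformly on $[-M,M]$; since $\phi$ restricted to $[-1,1]$ is an increasing bijection onto $[-1,1]$ by \eqref{phi}, the relation $U(x)=\phi\bigl(\tfrac{y+\al U(x)}{\eps}\bigr)$ inverts to $y=\eps\,\phi^{-1}(U(x))-\al U(x)=-\al U(x)+\ord\eps$, with $\phi^{-1}(U(x))$ in a fixed compact subinterval of $(-1,1)$. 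Thus $\cal I$ and the $(x,y)$-projection of $\op Q$ both coincide with $\{y=-\al U(x)\}$ up to $\ord{\al^2}$ and $\ord\eps$ respectively, i.e. they agree up to $\ord{\al^2,\eps}$.

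I expect the main obstacle to be the expansion in the middle step: keeping the $\ord{\al^2}$ remainder in $I(x)$ honest and uniform requires tracking the (distinct) $x$-drifts of the two orbits across the strip and checking uniformity in $x\in[-M,M]$ and in $y_0$; and conceptually one must recognise that the isochrone is pinned at leading order to the Filippov weight $\Lambda$ (equivalently to the $\eps\to0$ slice $\op C_0\cap\{u=U(x)\}$ of $\op Q$), so that linearity of $g$ in $u$ — exactly the condition making $1-2\Lambda=-U$ — is what forces the two curves together.
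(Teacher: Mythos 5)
Your proposal is correct and follows essentially the same route as the paper's proof: equate the two transit times across the strip, expand to first order to get the isochrone $y=\al\,\frac{g^+(x,0)+g^-(x,0)}{g^+(x,0)-g^-(x,0)}+\ord{\al^2}$, and compare with the projection of $\op Q$, which satisfies $y=-\al u+\ord\eps$ with $g(x,0;u)=0$, using linearity of $g$ in $u$ only at this last identification. The only differences are cosmetic refinements (parametrising the time integrals by $y$ instead of invoking the mean value theorem, and an implicit-function-theorem argument for existence and uniqueness of $\cal I$, which the paper leaves implicit).
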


\begin{proof}
Take an initial point $p$ with coordinates $(x,y)=(x_p,y_p)$ such that $|y_p|<\al$.
Approaching from $y$ negative (along the $f^-,g^-$ systems) the time taken to reach $y=+\al$ is $\Delta t^-$
such that
$$
\int_0^{\Delta t^-}g^-(x^-(t), y^-(t))dt=\al -y_p
$$
while approaching from $y$ positive (along the $f^+,g^+$ systems) the time taken to reach $y=-\al$ is
$$
\int_0^{\Delta t^+}g^+(x^+(t), y^+(t))dt=-\al -y_p\;.
$$
Applying the mean value theorem we have that, there exist $t^\pm \in (0,\Delta t^\pm)$ sicu that
$$
\int_0^{\Delta t^\pm}g^\pm(x^\pm(t), y^\pm(t))dt= g^\pm(x^\pm(t^\pm), y^\pm(t^\pm))\Delta t^\pm,
$$
therefore both times are equal if
$$
\frac{-\al -y_p} {g^+(x^+(t^+), y^+(t^+))}
=
\frac{\al -y_p} {g^-(x^-(t^-), y^-(t^-))}\;.
$$
This defines the isochrone surface $\op I$. Now, taking the limit when $\al \to 0$ one obtains
$$
y_p=\al \frac{g^+(x_p,0)+g^-(x_p,0)}{g^+(x_p,0)-g^-(x_p,0)}+\OO(\al^2).
$$
Let us now consider that $g$ in \eref{Ns} is linear in $u$, that is
$$
g=\frac{g^++g^-}{2}+\frac{g^+-g^-}{2}u .
$$
Now let us find $\op Q$ to show that it  coincides in first order with $\op I$.
For $u\in[-1,1]$ we have that the curve $u=\phi(\frac{y+\al u}{\eps})$ is contained in $y=-\al u+\OO(\eps)$.
If the vector field is linear with respect to $u$, from $g(x,0,u)=0$ one easily obtains
$u= -\frac{g^+(x,0)+g^-(x,0)}{g^+(x,0)-g^-(x,0)}$ which, combined with the expression for $y$ obtained above gives
$$
y=\alpha\frac{g^+(x,0)+g^-(x,0)}{g^+(x,0)-g^-(x,0)}+\OO( \eps)\;.
$$
Therefore, both curves coincide up to $\ord{\eps,\alpha ^2}$.
\end{proof}

\section{Closing Remarks}\label{sec:conc}

The two canonical formalisms for handling the discontinuity are mainly associated with the names of Filippov \cite{f64,f88} and Utkin \cite{u77,u74,u92}.
Both methods are intuitive, but one expresses the system on the switching manifold in terms of the component vector fields $\bb{f^\pm(x,y),\;g^\pm(x,y)}$,
the other in terms of a combination $\bb{f(x,y;u),\;g(x,y;u)}$.
The latter permits nonlinearity in the switch (i.e. in the $u$ dependence),
and it turns out that either linear or nonlinear models can both be proven `rigorously' to
approximate the dynamics of a system specified by \eref{Ns}.
With increasing applications of interest in the mechanical, biological, or social sciences, clearer criteria for choosing between the two methods are clearly desirable.

The process of regularizing a discontinuity is widely assumed to support Filippov's method,
when actually the process is tautologous: the way one chooses to regularize the vector field actually pre-determines
whether the outcome will be dynamics that assumes a linear combination across the discontinuity, or permits nonlinearity.
Fortunately the situation is much less ambiguous than this would suggest, and as we have shown, Filippov's linear sliding and the (less common)
nonlinear sliding are each valid in certain distinct limits.

The results here apply to a single attracting switching surface. 
The situation for two or more switches turns out to be even richer and more intriguing, see \cite{j16jitter}.




\clearpage

\appendix




\newpage
\section{Proof of Theorem \ref{propfil}: hysteresis gives linear sliding to $\ord{\alpha}$
}\label{sec:hproof}

Take $\alpha_1 >0$ fixed.
We will take a compact set $\KK=[-M,M]\times [-\alpha _1, \alpha _1]$, where $M$ is given in \eqref{g},
and consider the vector field \eqref{Ns}, \eqref{hyst0} and \eqref{FF}, that we denote as
\begin{equation}\label{def:Filippov}
Z(x,y)=\left\{\begin{array}{l}
    X^+(x,y),\, (x,y)\in \KK^+\\
    X^-(x,y),\, (x,y)\in \KK^-,
    \end{array}\right.
\end{equation}
where $X^\pm = (f^\pm,g^\pm)$ as in \eqref{FF} and $\KK^+=\{(x,y)\in \KK,\ y\ge 0\}$,
$\KK^-=\{(x,y)\in K,\ y\le 0\}$ with a switching surface
$$
\Sigma=\{(x,y) \in \KK, \ y= 0\}.
$$
We
know that $g$ satisfies  \eqref{g}
therefore:
$$
g^- (x,y)>0, \quad g^+(x,y)<0.
$$

The first observation is that, after a smooth change of variables given by the flow box theorem, one can assume that
$f^+(x,y)=0$, $g^+(x,y)=-1$, and therefore the upper vector field is
\begin{equation}\label{eq:simplifiedX+}
X^+(x,y)=\left(
\begin{array}{c}
0\\
 -1
 \end{array}
 \right).
\end{equation}
To produce motion along the surface we must then have $f^-\neq0$, and without loss of generality we assume $f^->0$.
Then the Filippov vector field \eqref{xfilippov} in these new variables $(x,y)$ is given by
\begin{equation}\label{eq:simplifiedF}
\dot x= f_F(x):=\frac{f^-(x,0)}{g^-(x,0)+1} >0,
\end{equation}
and therefore the Filippov vector field ``goes to the right''. The case

Assume that, for any $(x,y) \in \KK$, one has the following bounds:
\begin{equation}\label{eq:boundsfg}
\begin{array}{cccccccc}
0&<& D&<& f^-(x,y)&<&C\\
0&<& D&<& g^-(x,y)&<&C.
\end{array}
\end{equation}
During this section, we will use the letter $L$ to denote any constant
just depending on the vector field $X^-$ and its derivatives in the compact $\KK$.

Consider the solution of the Filippov vector field \eqref{eq:simplifiedF} in $\Sigma$, $x_F(t)$
with initial condition $x_F(0)=x_0 \in (-M,M)$ and such that $x_F(t) \in (-M,M)$, for $t\in [0,T]$.

Take $\gamma >0$ small enough and $0<\alpha <\al _1$ such that the rectangles:
$$
D_\gamma ^\alpha =[x_0-\gamma , x_F(T)+\gamma ] \times [-\al,\al],
$$
satisfy $D_\gamma ^\alpha \subset \KK$.

Consider the solution of the vector field \eqref{Ns} using the hysteretic process:
take the solution $(x^-(t),y^-(t))$ of $X^-$ with initial condition $(x^-(0),y^-(0))=(x_0, -\alpha)$
and  $T^-=T^-(x_0)$ such that $y^-(T^-)=\alpha$.
Then  define $\bar x_0= x^-(T^-)$.
It is clear that the function $T^-$ also depends on $\al$ but we avoid this dependence if there is not danger of confusion.
Now, consider the solution
$(x^+(t),y^+(t))$ of $X^+$ with initial condition $(x^+(0),y^+(0))=(\bar x_0, \alpha)$,
and  $T^+=T^+(\bar x_0)$ such that $y^+(T^+)=-\alpha$.

Then  define $ x_1= x^+(T^+)$.
This completes a cycle of the hysteretic process.

It is important to note that for the vector field $X^+$ given in
\eqref{eq:simplifiedX+} one has that
$T^+= 2\al$ and $x_1= \bar x_0=x^-(T^-)$.
Therefore, after one cycle of the hysteretic process, the hysteretic solution
$(x_h(t),y_h(t))$ gets to the point $(x_1,-\al)$, with $x_1= x^-(T^-)$,
and the time spent in the cycle is $S=S(x_0)=T^-(x_0)+2\al$,
that is:
\begin{equation}\label{eq:histereticsol}
x_h(S(x_0))=x_h(T^-(x_0)+2\al)= x^-(T^-(x_0)).
\end{equation}
Proceeding by induction one can define
$x_i =x_h(S(x_{i-1}))= x^- (T^-(x_{i-1}))$,
where the time $S(x_{i-1})= T^-(x_{i-1})+2\al$, and $T^-(x_{i-1})$ is the time needed by the solution
$(x^-(t),y^-(t))$ of $X^-$ with initial condition
$(x^-(0),y^-(0))=(x_{i-1},-\al)$ to arrive at $y=\al$, that is, $y^-(T^-(x_{i-1}))=\al$.

We can use the hysteretic process to move along the rectangle $D_\gamma ^\alpha$.
Next proposition, which gives immediately Theorem \ref{propfil},
relates the resulting trajectory with the one obtained in $\Sigma$ following the Filippov vector field.

\begin{proposition}\label{prop:error}
Fix $T >0$ and consider the solution $x_F(t)$ of the Filippov System \eqref{eq:simplifiedF} in $\Sigma$,  for $0\le t\le T$
and the hysteretic solution $(x_h(t), y_h(t))$ with initial condition
$(x_h(0), y_h(0))=(x_0, -\al)=(x_F(0), -\al)$.

Take $n=n(\al)$ the number of cycles of the hysteretic solution
such that $x_n \le x_F(T)\le x_{n+1}$.

Then there exists a constant $L$ only depending of the vector field $X^-$ and the compact $\KK$ such that
$$
|x_n-x_F(T)| \le L \al.
$$
Moreover, for any $t \in [0,T]$
$$
|x_h(t)-x_F(t)| \le L \al.
$$

\end{proposition}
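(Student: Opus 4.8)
The plan is to view the hysteretic process, sampled at the instants when it returns to $y=-\al$, as a non-uniform Euler discretisation of the Filippov equation \eqref{eq:simplifiedF}, and then to close the argument with the textbook consistency-plus-stability estimate for one-step methods.

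First I would analyse a single cycle. Starting from $(x_i,-\al)$, parametrise the $X^-$-arc by $y\in[-\al,\al]$; since $\dot y=g^-\in(D,C)$ by \eqref{eq:boundsfg}, the arc is traversed in time $T^-(x_i)\in(2\al/C,2\al/D)$ and the $x$-coordinate stays $\ord{\al}$-close to $x_i$ throughout. Using $\tfrac{dx}{dy}=f^-/g^-$, which is $C^1$ on $\KK$ because $g^->D>0$, a first-order Taylor expansion about $(x_i,0)$ gives $x_{i+1}-x_i=2\al\, f^-(x_i,0)/g^-(x_i,0)+\ord{\al^2}$ and $T^-(x_i)=2\al/g^-(x_i,0)+\ord{\al^2}$. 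Since the $X^+$-arc leaves $x$ unchanged and lasts exactly $2\al$ (as observed after \eqref{eq:histereticsol}), the cycle time is $S(x_i)=T^-(x_i)+2\al=2\al\,(g^-(x_i,0)+1)/g^-(x_i,0)+\ord{\al^2}$, whence
\[
x_{i+1}-x_i=S(x_i)\,f_F(x_i)+\ord{\al^2},\qquad 2\al\le S(x_i)=\ord{\al},
\]
with $f_F=f^-(\cdot,0)/(g^-(\cdot,0)+1)$ and all constants depending only on $X^-$ and $\KK$. This is the crucial observation: one hysteresis loop is, up to $\ord{\al^2}$, exactly one Euler step of the Filippov vector field with step length $S(x_i)$.

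Next I would compare with the exact Filippov flow. Setting $t_0=0$, $t_{i+1}=t_i+S(x_i)$ (so $x_h(t_i)=x_i$, $y_h(t_i)=-\al$), integrating \eqref{eq:simplifiedF} over $[t_i,t_{i+1}]$ and using that $x_F$ is Lipschitz gives $x_F(t_{i+1})-x_F(t_i)=S(x_i)\,f_F(x_F(t_i))+\ord{\al^2}$. Subtracting and writing $e_i=x_i-x_F(t_i)$, the Lipschitz bound on $f_F$ yields $|e_{i+1}|\le\bb{1+L\,S(x_i)}|e_i|+\ord{\al^2}$ with $e_0=0$. Because $S(x_i)\ge 2\al$ and $\sum_j S(x_j)\le T+\ord{\al}$ along the cycles meeting $[0,T]$, there are at most $n\le(T+\ord{\al})/(2\al)=\ord{1/\al}$ of them, so the discrete Gronwall inequality gives $|e_i|\le e^{L(T+\ord{\al})}\,i\,\ord{\al^2}\le L\al$ for every $i\le n$; in particular $|x_n-x_F(T)|\le L\al$ after absorbing the $\ord{\al}$ gap between $t_n$, $t_{n+1}$ and $T$, using $x_{n+1}-x_n=\ord{\al}$ and monotonicity of both $x_F$ and $(x_i)$ to identify the index $n$ of the statement with the time-based one up to $\pm1$. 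For the pointwise bound, on a cycle $[t_i,t_{i+1}]$ the coordinate $x_h$ runs monotonically from $x_i$ to $x_{i+1}$ and then stays at $x_{i+1}$, so $|x_h(t)-x_i|\le C\,S(x_i)=\ord{\al}$, and likewise $|x_F(t)-x_F(t_i)|=\ord{\al}$; the triangle inequality then gives $|x_h(t)-x_F(t)|\le L\al$ on $[0,T]$.

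The one technical point to handle with care is confinement: the Taylor expansions require the trajectory to remain in $D_\ga^\al\subset\KK$, yet that is part of what we are proving. I would close this by a standard continuation argument — run the estimate on the maximal initial segment of cycles staying in $D_\ga^\al$; there it forces $|x_h(t)-x_F(t)|\le L\al<\ga$ once $\al_0$ is small, so the trajectory lies in the interior of $D_\ga^\al$ and the segment extends, hence covers all of $[0,T]$. I expect this bootstrap, together with the bookkeeping that keeps the per-cycle error uniformly $\ord{\al^2}$ while the number of cycles is only $\ord{1/\al}$ (so the accumulated error is $\ord{\al}$, not $\ord{1}$), to be the main place where care is needed; the remainder is the classical convergence proof for Euler's method.
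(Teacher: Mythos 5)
Your proposal is correct and follows essentially the same route as the paper, which itself presents the argument as an Euler-method error estimate: per-cycle consistency $x_{i+1}-x_i=S(x_i)f_F(x_i)+\OO(\al^2)$ with $S(x_i)=T^-(x_i)+2\al=\OO(\al)$, the stability recursion $|e_{i+1}|\le(1+L\al)|e_i|+L\al^2$, an $\OO(1/\al)$ bound on the number of cycles, and interpolation within each cycle to get the bound for all $t\in[0,T]$. The only differences are cosmetic: you parametrise the $X^-$ arc by $y$ and count cycles by elapsed time with a bootstrap for confinement, whereas the paper Taylor-expands in time, counts cycles via the spatial increments $x_{i+1}-x_i\ge D L_2\al$, and secures confinement directly from monotonicity of $x$ together with the per-cycle a priori bound of Lemma \ref{lem:histeresis1}.
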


To prove this proposition, which reminds the  estimation of the error in the Euler method, we first need some lemmas.

\begin{lemma}\label{lem:histeresis1}
Let
$\bar x \in [x_0-\frac{\gamma }{2}, x_F(T)+\frac{\gamma }{2}]$.
Then $\exists \alpha _0 $, with $0<\alpha_0<\alpha _1$, such that if $0<\al \le \al_0$,
the solution $(x^-(t),y^-(t))$ of $X^-$ with initial condition $(x(0), y(0))=(\bar x,-\al)$
reaches $y=\al$ in a point $(x^*,\al) $, with $x^* \in (x_0-\gamma , x_F(T)+\gamma )$.
\end{lemma}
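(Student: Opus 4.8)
The statement is purely about the smooth vector field $X^-=(f^-,g^-)$ on the compact set $\KK$, with no reference to the switch or to $X^+$, so the plan is to use only the bounds \eqref{eq:boundsfg} and a standard continuous-dependence argument. First I would observe that the trajectory of $X^-$ through $(\bar x,-\al)$ must cross $y=\al$ in finite time: since $g^-(x,y)>D>0$ on $\KK$, the $y$-component of the solution increases at rate at least $D$, so it reaches $y=\al$ after a time $T^-(\bar x)$ with $2\al/C \le T^-(\bar x)\le 2\al/D$. In particular $T^-(\bar x)=\ord{\al}$, and the solution stays inside the horizontal strip $|y|\le\al$ for all $t\in[0,T^-(\bar x)]$, so it does not exit $\KK$ through the top or bottom before reaching $y=\al$ (provided $\al\le\alpha_1$).

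Next I would control the horizontal displacement. Along the solution, $|\dot x^-| = |f^-(x^-,y^-)| < C$, so $|x^-(T^-(\bar x)) - \bar x| \le C\,T^-(\bar x) \le 2C\al/D =: L\al$. Hence the exit point $x^*=x^-(T^-(\bar x))$ satisfies $|x^*-\bar x|\le L\al$. Since $\bar x$ lies in the slightly-shrunk interval $[x_0-\tfrac{\gamma}{2},\,x_F(T)+\tfrac{\gamma}{2}]$, choosing $\alpha_0$ small enough that $L\alpha_0 < \gamma/2$ forces $x^*\in(x_0-\gamma,\,x_F(T)+\gamma)$, which is exactly the claimed inclusion. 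This also guarantees the whole arc of the solution stays inside the rectangle $D_\gamma^\al\subset\KK$ (since the $x$-coordinate is at every intermediate time within $L\al\le\gamma/2$ of $\bar x$, hence inside $(x_0-\gamma, x_F(T)+\gamma)$, and $|y|\le\al$), so the bounds \eqref{eq:boundsfg} we invoked are legitimate throughout, closing a potential circularity.

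The only mild subtlety is making the argument uniform in $\bar x$: one needs a single $\alpha_0$ working for every $\bar x$ in the compact interval, but this is immediate because all the estimates used ($D<g^-<C$, $|f^-|<C$) hold uniformly on the compact $\KK$, so $\alpha_0 := \min\{\alpha_1,\, \gamma D/(4C)\}$ (or any such explicit choice) works for all admissible $\bar x$ simultaneously. I do not expect a genuine obstacle here; the lemma is really just the statement that trajectories of a smooth field with bounded components and $g^-$ bounded below cross a thin strip in time $\ord{\al}$ with horizontal drift $\ord{\al}$, and the main point is simply to record the quantitative constants and the choice of $\alpha_0$ so that the subsequent induction in Proposition \ref{prop:error} has a well-defined domain to iterate on.
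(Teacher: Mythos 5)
Your proposal is correct and is essentially the paper's argument: both hinge on the uniform bounds $0<D<g^-,f^-<C$ on $\KK$ to show the horizontal displacement during the crossing of the strip $|y|\le\al$ is at most $2C\al/D$, and then choose $\alpha_0\approx D\gamma/(4C)$ so this is below $\gamma/2$. The only cosmetic difference is that the paper parametrizes the orbit by $x$ and integrates $g^-/f^-$ (using $f^->0$ to get the lower bound for free), while you parametrize by time, bounding the crossing time by $2\al/D$ and then the drift by $C$ times that; your explicit remark on keeping the arc inside $D_\gamma^\al\subset\KK$ is a welcome (and harmless) extra precaution.
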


\begin{proof}
As $g^- >0$ and $f^->0$, the lower bound is already fulfilled.
To prove the upper bound, from the equation for the orbits of the vector field $X^-$ we have
$$
y(x)+\al =\int _{\bar x}^{x}\frac{g^-(s,y(s))}{f^-(s,y(s))} ds .
$$
Using the bounds \eqref{eq:boundsfg} one has we have
$$
y(x)+\al >\frac{D}{C}(x-\bar x) .
$$
Using that $\frac{D}{C}(x^*-\bar x) < 2\alpha $
we obtain the desired result taking $0<\alpha \le \alpha _0<\frac{D\;\gamma}{4C}$.
\end{proof}

From now on, we will write $h=\OO(\alpha^n)$ when $h$ is a function bounded as
$|h|\le L\al ^n$, and $L$ is, as usual, a constant only depending on  the vector fields $X^\pm$ and their derivatives in the compact $\KK$.

Next lemma gives a first upper bound of the transition time $T^-$.

\begin{lemma}\label{lem:histeresis2}
Let $\alpha_0$ as  in Lemma \ref{lem:histeresis1}, and $0<\alpha \le \alpha_0$.
Let $\bar x \in [x_0-\frac{\gamma }{2}, x_F(T)+\frac{\gamma }{2}]$.
Let $T^-=T^-(\bar x)$ the time needed for the solution $(x^-(t),y^-(t))$ of $X^-$ with initial condition $(x(0), y(0))=(\bar x,-\al)$
 to reach $y=\al$.
Then there exists $L>0$ such that
\begin{equation}\label{eq:boundT}
0<T^-(\bar x) <L \al
\end{equation}
\end{lemma}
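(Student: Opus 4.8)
The statement is an elementary consequence of the uniform lower bound on $g^-$ in \eqref{eq:boundsfg}: along the flow of $X^-$ the coordinate $y$ is strictly increasing at a rate bounded below by $D$, and since it only has to traverse the strip from $-\alpha$ to $+\alpha$, a height of $2\alpha$, the transit time cannot exceed $2\alpha/D$. So the plan is simply to write the integral form of $\dot y = g^-$ along the relevant orbit and read off the bound.

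\textbf{Key steps.} First, by Lemma \ref{lem:histeresis1}, for $0<\alpha\le\alpha_0$ and $\bar x \in [x_0-\tfrac{\gamma}{2},\, x_F(T)+\tfrac{\gamma}{2}]$ the solution $(x^-(t),y^-(t))$ of $X^-$ with $(x^-(0),y^-(0))=(\bar x,-\alpha)$ stays inside the rectangle $D_\gamma^\alpha \subset \KK$ for all $t\in[0,T^-(\bar x)]$, so the bounds \eqref{eq:boundsfg} hold along this orbit, in particular $g^-(x^-(t),y^-(t))>D>0$. Second, since $y^-(0)=-\alpha < \alpha$ and the flow is continuous, $T^-(\bar x)>0$ is immediate. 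Third, integrating $\dot y^-(t)=g^-(x^-(t),y^-(t))$ from $0$ to $T^-(\bar x)$ gives
$$
2\alpha \;=\; y^-(T^-(\bar x))-y^-(0)\;=\;\int_0^{T^-(\bar x)} g^-(x^-(t),y^-(t))\,dt \;>\; D\,T^-(\bar x),
$$
hence $T^-(\bar x) < \tfrac{2}{D}\,\alpha$. Taking $L=\tfrac{2}{D}$ (which depends only on $X^-$ on $\KK$) yields \eqref{eq:boundT}.

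\textbf{Main obstacle.} There is essentially no difficulty here; the only point that needs care is ensuring the orbit never leaves $\KK$ before reaching $y=\alpha$ (so that the lower bound $g^->D$ is legitimately applied on the whole interval $[0,T^-]$), and this is exactly what Lemma \ref{lem:histeresis1} provides. Everything else is a one-line integration.
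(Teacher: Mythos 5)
Your proof is correct and follows essentially the same route as the paper: both rest on Lemma \ref{lem:histeresis1} to keep the orbit inside $D_\gamma^\alpha$, then integrate $\dot y^- = g^-$ over $[0,T^-]$ and use the lower bound $g^->D$ from \eqref{eq:boundsfg} to conclude $2\alpha > D\,T^-$, i.e. $T^-<\tfrac{2}{D}\alpha$. The only cosmetic difference is that the paper invokes the mean value theorem for integrals (writing $2\alpha = g^-(x^-(t^*),y^-(t^*))\,T^-$) whereas you bound the integrand directly, which is the same estimate.
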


\begin{proof}
We know, by lemma \ref{lem:histeresis1}, that the solution $(x^-(t),y^-(t))$ of $X^-$ remains in $D_\gamma^\al$ until it reaches $y=\al$.
As:
\begin{equation}\label{eq:tfc}
\int _0^{T^-}\dot y^-(t)dt= \int _0^{T^-} g^-(x^-(t),y^-(t))dt
\end{equation}
one has, using the definition of $T^-$, the bounds \eqref{eq:boundsfg} and lemma \ref{lem:histeresis1}:
$$
2\alpha = g^-(x^-(t^*),y^-(t^*)) T^->D T^-
$$
\end{proof}

\begin{lemma}\label{lem:histeresis3}
With the same hypotheses of Lemma \ref{lem:histeresis2} one has:
\begin{equation}\label{eq:asymptoticsT}
T^-(\bar x)=\frac{2 \al}{g^-(\bar x,0)}+ \OO(\al^2)
\end{equation}
\end{lemma}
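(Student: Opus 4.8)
The plan is to start from the implicit equation defining the transition time, namely
$$
2\al = \int_0^{T^-}g^-(x^-(t),y^-(t))\,dt ,
$$
obtained by integrating $\dot y^-=g^-$ from $y=-\al$ to $y=\al$, exactly as in the proof of Lemma \ref{lem:histeresis2}. The idea is to replace the integrand by its value at the initial point and control the error. First I would invoke Lemma \ref{lem:histeresis2}, which already gives $T^-(\bar x)=\OO(\al)$, together with Lemma \ref{lem:histeresis1}, which guarantees the whole arc $(x^-(t),y^-(t))$ for $t\in[0,T^-]$ stays inside the compact set $D_\gamma^\al\subset\KK$, where $g^-$ and its derivatives are bounded (by \eqref{eq:boundsfg} and the conventions on $L$).

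Next I would Taylor-expand $g^-$ along the trajectory. Since $(x^-(t),y^-(t))=(\bar x,-\al)+\OO(t)$ with $t\le T^-=\OO(\al)$, and since $-\al=\OO(\al)$ anyway, we get $g^-(x^-(t),y^-(t))=g^-(\bar x,0)+\OO(\al)$ uniformly for $t\in[0,T^-]$, the $\OO(\al)$ term controlled by the Lipschitz constant $L$ of $g^-$ on $\KK$ and the bound $|\dot x^-|,|\dot y^-|<C$. Substituting into the integral equation,
$$
2\al = T^-(\bar x)\,g^-(\bar x,0) + T^-(\bar x)\,\OO(\al) = T^-(\bar x)\,g^-(\bar x,0) + \OO(\al^2),
$$
using $T^-=\OO(\al)$ once more for the error term. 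Since $g^-(\bar x,0)\ge D>0$ by \eqref{eq:boundsfg}, I can divide:
$$
T^-(\bar x) = \frac{2\al}{g^-(\bar x,0)} + \frac{\OO(\al^2)}{g^-(\bar x,0)} = \frac{2\al}{g^-(\bar x,0)} + \OO(\al^2),
$$
which is \eqref{eq:asymptoticsT}, with the implied constant depending only on $X^-$, its derivatives, and $\KK$.

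The only delicate point is making sure the error term is genuinely $\OO(\al^2)$ and not merely $\OO(\al)$: this requires that the deviation of $g^-$ along the arc from its initial value be $\OO(\al)$ (not $\OO(1)$), which in turn rests on the arc having length $\OO(\al)$ — i.e. on $T^-=\OO(\al)$ from Lemma \ref{lem:histeresis2} — and on $g^-$ being $C^1$ on the compact $\KK$. Once those two facts are in hand the estimate is routine; no genuine obstacle remains, and the argument is exactly the first-order truncation underlying the Euler-method error analysis alluded to before Proposition \ref{prop:error}.
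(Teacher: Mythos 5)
Your proposal is correct and follows essentially the same route as the paper: both start from the integral identity $2\al=\int_0^{T^-}g^-(x^-(t),y^-(t))\,dt$, use the a priori bound $T^-=\OO(\al)$ from Lemma \ref{lem:histeresis2} together with the confinement of the arc in the compact set (Lemma \ref{lem:histeresis1}) to show the integrand deviates from $g^-(\bar x,0)$ by only $\OO(\al)$, and then divide by $g^-(\bar x,0)\ge D>0$. The paper merely phrases this first-order control via the mean value theorem applied to $t\mapsto g^-(x^-(t),y^-(t))$ plus a Taylor step in $y$, which is the same estimate you obtain with your Lipschitz argument.
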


\begin{proof}
By the mean value theorem one has:
$$
g^-(x^-(t),y^-(t))=g^-(\bar x, -\al) + t G(t^*),
$$
where
$
G(t)= \frac{d}{dt}g^-(x^-(t),y^-(t))
$
and $t^*=t^*(t)$ satisfies $0\le t^* \le t$.

Using bounds \eqref{eq:boundsfg} and Lemma \ref{lem:histeresis1}, one has that $|G(t^*)| \le L$.
Then we have, using again \eqref{eq:tfc}:
\begin{eqnarray*}
2\al &=& T^-
g^-(\bar x,-\al) +
\int _0^{T^-}t G(t^*) dt \\
&=& T^- g^-(\bar x, 0)- T^-\frac{\partial g^-}{\partial y}(\bar x, \al ^*)\al+
\int _0^{T^-} t\,G(t^*)\, dt \\
&=& T^- g^-(\bar x, 0)+ m (T^-,\al)
\end{eqnarray*}
where $0\le \alpha ^*\le \alpha$.
Then, by the a-priori bounds on $T^-$ given in lemma \ref{lem:histeresis2} and the bound of $G(t^*)$,
and using again bounds like \eqref{eq:boundsfg} for the derivatives of $g^-$, one has that there exists $L>0$, such that
$|m(T^-,\al)| \le L \alpha ^2$,
and therefore one gets that
$$
T^-=\frac{2\al}{g^-(\bar x,0)}+\OO(\al ^2)
$$
\end{proof}

\begin{remark}
For $0\le t\le S(x_0)$, where $S(x_0)=T^-(x_0)+2\alpha $ is the time needed in a hysteretic cycle,
the solutions of the Filippov vector field satisfy $x_F(t)-x_F(0)= \OO(\al)$, and the hysteretic solution
also satisfies $x_h(t)-x_h(0)= \OO(\al)$, consequently:
$$
x_F(t)-x_h(t)=\OO(\alpha), \
0\le t\le S(x_0).
$$
Next lemma says that at the end point $S(x_0)$ the solutions approach each other up to order $\al^2$.
Therefore, the new hysteretic cycle begins $\al^2$ close to the Filippov solution at every step.
\end{remark}

\begin{lemma}\label{lem:histeresis4}
Let $\alpha_0$ as given in Lemma \ref{lem:histeresis1}, and $0<\alpha \le \alpha_0$.
Let $\bar x \in [x_0-\frac{\gamma }{2}, x_F(T)+\frac{\gamma}{2}]$.
Consider the solution $\bar x_F(t)$ of the Filippov vector field \eqref{eq:simplifiedF}, with initial condition $\bar x_F(0)=\bar x$.
Let $(x_h(t),y_h(t))$ the hysteretic solution  with initial condition $(x_h(0),y_h(0)=(\bar x,-\al)$.
Let be $S=S(\bar x)=T^-(\bar x)+2\al$ the time in a hysteretic cycle, where $T^-=T^-(\bar x)$ is given in Lemma \ref{lem:histeresis3}.
Then
\begin{equation}\label{eq:asymptoticsT}
\bar x_1-\bar x_F(S)=x_h(S)-\bar x_F(S)=\OO(\al^2).
\end{equation}

\end{lemma}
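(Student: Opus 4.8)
The plan is to compute explicit first-order-in-$\al$ expansions of both $\bar x_1=x_h(S)$ and $\bar x_F(S)$, and observe that their leading $\OO(\al)$ terms coincide, so that only an $\OO(\al^2)$ remainder survives. The identification $\bar x_1=x_h(S)$ is immediate: in the simplified coordinates \eqref{eq:simplifiedX+} the field $X^+$ has zero $x$-component, so during the $X^+$-leg of the cycle (of duration $2\al$) the $x$-coordinate does not move; hence all of the $x$-displacement in one cycle is produced by the $X^-$-flight and $x_h(S)=x^-(T^-)=\bar x_1$.

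First I would handle the hysteretic endpoint. Writing $\bar x_1=\bar x+\int_0^{T^-}f^-\bb{x^-(t),y^-(t)}\,dt$, I use Lemma \ref{lem:histeresis1} to keep the orbit inside $D_\gamma^\al$ (so $|y^-(t)|\le\al$) and Lemma \ref{lem:histeresis2} to get $T^-=\OO(\al)$; together with $\dot x^-=f^-<C$ this gives $|x^-(t)-\bar x|\le C\,T^-=\OO(\al)$ on $[0,T^-]$. Taylor-expanding $f^-$ about $(\bar x,0)$ and using boundedness of its derivatives on $\KK$ yields $f^-\bb{x^-(t),y^-(t)}=f^-(\bar x,0)+\OO(\al)$ uniformly in $t$, so integrating over an interval of length $T^-=\OO(\al)$ gives $\bar x_1=\bar x+f^-(\bar x,0)\,T^-+\OO(\al^2)$. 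Substituting the refined estimate $T^-=\tfrac{2\al}{g^-(\bar x,0)}+\OO(\al^2)$ from Lemma \ref{lem:histeresis3},
$$\bar x_1=\bar x+\frac{2\al\,f^-(\bar x,0)}{g^-(\bar x,0)}+\OO(\al^2).$$

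Next I would expand the Filippov solution. Since $f_F$ is smooth and bounded on $(-M,M)$ and $S=T^-+2\al=\OO(\al)$, Taylor's theorem gives $\bar x_F(S)=\bar x+f_F(\bar x)\,S+\OO(S^2)=\bar x+f_F(\bar x)\,S+\OO(\al^2)$. Now $S=T^-+2\al=\tfrac{2\al}{g^-(\bar x,0)}+2\al+\OO(\al^2)=2\al\,\tfrac{1+g^-(\bar x,0)}{g^-(\bar x,0)}+\OO(\al^2)$, and by \eqref{eq:simplifiedF} one has $f_F(\bar x)=\tfrac{f^-(\bar x,0)}{g^-(\bar x,0)+1}$; multiplying, the factor $g^-(\bar x,0)+1$ cancels and
$$\bar x_F(S)=\bar x+\frac{2\al\,f^-(\bar x,0)}{g^-(\bar x,0)}+\OO(\al^2).$$
Comparing the two displays gives $\bar x_1-\bar x_F(S)=\OO(\al^2)$, which is the asserted estimate.

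The only delicate point is the bookkeeping of the error terms: one must check that every Taylor remainder is genuinely quadratic in $\al$ with a constant depending only on $X^\pm$ and $\KK$, and this rests squarely on the three preparatory lemmas — the a priori confinement of the $X^-$-orbit to $D_\gamma^\al$ (Lemma \ref{lem:histeresis1}), the a priori bound $T^-=\OO(\al)$ (Lemma \ref{lem:histeresis2}), and the sharp expansion of $T^-$ (Lemma \ref{lem:histeresis3}). Once these are in hand, the matching of leading-order terms reduces to the short algebraic identity $f_F(\bar x)\cdot 2\al\,\tfrac{1+g^-(\bar x,0)}{g^-(\bar x,0)}=f^-(\bar x,0)\cdot\tfrac{2\al}{g^-(\bar x,0)}$, which is nothing more than the definition $f_F=f^-/(g^-+1)$.
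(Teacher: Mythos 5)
Your proposal is correct and follows essentially the same route as the paper: Taylor-expand both $\bar x_F$ and the hysteretic endpoint to first order, insert the expansion $T^-=\tfrac{2\al}{g^-(\bar x,0)}+\OO(\al^2)$ from Lemma \ref{lem:histeresis3}, and observe that the factor $1+g^-(\bar x,0)$ cancels against the definition $f_F=f^-/(1+g^-)$, leaving an $\OO(\al^2)$ discrepancy. Your extra bookkeeping (confinement via Lemma \ref{lem:histeresis1}, the a priori bound of Lemma \ref{lem:histeresis2}, and the observation that the $X^+$-leg does not move $x$ in the simplified coordinates) just makes explicit what the paper uses implicitly.
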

\begin{proof}
The proof is an easy consequence of lemma \ref{lem:histeresis3} and the Taylor theorem applied to both solutions.
On the one hand the Filippov solution satisfies equation \eqref{eq:simplifiedF}, and therefore:
$$
\bar x_F(t)= \bar x+ \frac{f^-(\bar x,0)}{1+g^-(\bar x,0)} t+ \OO(t^2)
$$
and
\begin{eqnarray*}
\bar x_F(T^-+2\al) &=& \bar x+ \frac{f^-(\bar x,0)}{1+g^-(\bar x,0)} \left(\frac{2\al}{g^-(\bar x,0)}+\OO(\al ^2)+2\al\right)+ \OO(\al^2)\\
&=& \bar x+\frac{f^-(\bar x,0)}{g^-(\bar x,0)}2\al + \OO(\al^2).
\end{eqnarray*}
We have, using the equations of $X^-$:
\begin{eqnarray*}
\bar x_1 &:=& x_h(T^-+2\al) = x^-(T^-)= \bar x+ f^-(\bar x,-\al) T^-+ \OO(\al ^2) \\
&=&\bar x+\frac{f^-(\bar x,0)}{g^-(\bar x,0)}2\al + \OO(\al^2).
\end{eqnarray*}

Therefore we have
$$
|\bar x_F(T^-+2\al)-x_h(T^-+2\al) | =\OO(\al^2),
$$
uniformly in $D^\al _\gamma$.
\end{proof}

Next lemma gives  the number of cycles needed to reach the final position of the Filippov solution
$x_F(T)$.

\begin{lemma}\label{lem:numbercycles}
Consider the Filippov solution $x_F(t)$, $0\le t\le T$, with initial condition $x_F(0)=x_0$.
Consider also the hysteretic solution $(x_h(t), y_h(t))$ with initial condition
$(x_h(0), y_h(0))=(x_0,-\alpha)$.

Let
 $ n=n(\al)$ be the number of hysteretic cycles such that:
$$
x_n\le x_F(T) \le x_{n+1}
$$
where $x_i$ is the value of the $x$ coordinate of the $i$ hysteretic cycle.

Then $n(\al)= \OO(\frac{1}{\al})$, uniformly for $0<\al\le \al _0$.
\end{lemma}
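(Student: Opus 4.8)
The plan is to show that each hysteretic cycle advances the $x$-coordinate by a definite positive amount of order $\al$, and that this advance is bounded below uniformly, so that after $\OO(1/\al)$ cycles the accumulated displacement exceeds the fixed length $x_F(T)-x_0$ traversed by the Filippov solution in time $T$. First I would recall from Lemma \ref{lem:histeresis3} and the equations of $X^-$ (using the simplified form \eqref{eq:simplifiedX+} so that $T^+=2\al$ and $x_{i}=x^-(T^-(x_{i-1}))$) that the increment in one cycle is
$$
x_{i}-x_{i-1}= f^-(x_{i-1},-\al)\,T^-(x_{i-1})+\OO(\al^2)
= \frac{f^-(x_{i-1},0)}{g^-(x_{i-1},0)}\,2\al+\OO(\al^2).
$$
By the bounds \eqref{eq:boundsfg}, $D<f^-<C$ and $D<g^-<C$ on $\KK$, so as long as the cycle points stay in $D_\gamma^\al$ there are constants $0<c_1<c_2$ (depending only on $X^-$ and $\KK$) with
$$
c_1\,\al \le x_{i}-x_{i-1}\le c_2\,\al
$$
for $\al$ small enough.

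Next I would argue that the relevant cycle points do remain in $D_\gamma^\al$. By Lemma \ref{lem:histeresis1} every cycle starting from a point in $[x_0-\tfrac{\gamma}{2},x_F(T)+\tfrac{\gamma}{2}]$ lands inside $(x_0-\gamma,x_F(T)+\gamma)$; and from Lemma \ref{lem:histeresis4} (or simply by telescoping the one-cycle estimate above) the partial sums $x_i$ stay within $\OO(\al)$ of the corresponding Filippov values $x_F(S(x_0)+\dots)$, hence within the smaller interval for $\al$ small. Therefore the two-sided estimate on $x_i-x_{i-1}$ holds for all $i\le n+1$, where $n=n(\al)$ is the first index with $x_{n+1}>x_F(T)$. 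Telescoping,
$$
x_n-x_0=\sum_{i=1}^{n}(x_i-x_{i-1})\ \in\ [\,n c_1\al,\ n c_2\al\,],
$$
and since $x_n\le x_F(T)$ while $x_{n+1}>x_F(T)$, we get $n c_1 \al \le x_F(T)-x_0$ and $(n+1)c_2\al > x_F(T)-x_0$, i.e.
$$
\frac{x_F(T)-x_0}{c_2\al}-1 < n(\al) \le \frac{x_F(T)-x_0}{c_1\al}.
$$
Since $x_F(T)-x_0$ is a fixed finite quantity (because $x_F$ solves \eqref{eq:simplifiedF} on $[0,T]$ with bounded right-hand side, so $|x_F(T)-x_0|\le CT$), this gives $n(\al)=\OO(1/\al)$ uniformly in $0<\al\le\al_0$, as claimed.

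The only delicate point is making the induction self-consistent: the uniform lower bound $x_i-x_{i-1}\ge c_1\al$ is what guarantees $n$ is finite and $\OO(1/\al)$, but to apply Lemma \ref{lem:histeresis3} at step $i$ one needs $x_{i-1}$ to still lie in $D_\gamma^\al$, which in turn uses that the cumulative error stays small — and that is exactly the content of Lemma \ref{lem:histeresis4} combined with a discrete Gronwall-type summation. So I would phrase the argument as a finite induction on $i$ up to the stopping index $n+1$, at each step invoking Lemma \ref{lem:histeresis1} to keep the orbit in $D_\gamma^\al$ and Lemma \ref{lem:histeresis3} to get the increment asymptotics, and only afterwards read off the bound on $n(\al)$ from the telescoped inequality. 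The main obstacle is bookkeeping — ensuring the $\OO(\al^2)$ per-step errors, summed over $\OO(1/\al)$ steps, remain $\OO(\al)$ and do not push the orbit out of $D_\gamma^\al$ — rather than any substantial new estimate.
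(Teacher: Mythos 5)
Your proposal is correct and takes essentially the same route as the paper: both bound each cycle increment $x_{i+1}-x_i$ above and below by constants times $\al$ (via Lemma \ref{lem:histeresis3} and the bounds \eqref{eq:boundsfg}), telescope the increments, and read off $n(\al)=\OO(1/\al)$ from the stopping condition $x_n\le x_F(T)\le x_{n+1}$. The self-consistency issue you flag (keeping the $x_i$ in $D_\gamma^\al$) is handled implicitly in the paper, since the increments are positive so the cycle points stay in $[x_0,x_F(T)]$ up to the stopping index.
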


\begin{proof}
Let $x_i$ denote the value of $x$ on the $i$- cycle.
By lemma \ref{lem:histeresis3} we know that the time needed by the orbit of $X^-$
with initial condition $(x^-(0),y^-(0))=(x_i,-\al)$ to get $y=\alpha $ is
$T^-(x_i)= \frac{2\al}{g^-(x_i,0)}+\OO(\al^2)$ and the time of the corresponding orbit of
$X^+$ to come back to $y=-\al$ is $2\al$.
Moreover, we know that $x_{i+1} =x^-(T^-(x_i))$.

Moreover, using bounds \eqref{eq:boundsfg}
$$
D T^-(x_i) \le x_{i+1}-x_i \le CT^-(x_i),
$$
but, by lemma \ref{lem:histeresis3} and bounds \eqref{eq:boundsfg} we know that there exists $L_1$, $L_2$ such that
$\alpha L_2 \le T^-(x_i)\le \al L_1$, and therefore, uniformly in $\al$ we get:
$$
D L_2 \al \le x_{i+1}-x_i\le C L_1 \al
$$
adding these inequalities from $i=0,\dots , n$, $n=n(\al)$, one obtains:
$$
L_2 D \al n(\al) \le x_{n(\al)}-x_0\le L_1 C \al n(\al).
$$

In particular
$$
n(\al) \le \frac{x_{n(\al)}-x_0}{L_2 D \al} \le
\frac{x_F(T)-x_0}{L_2 D \al}
$$
obtaining that $n(\al) =\OO(\frac{1}{\al})$.
To get a lower bound for $n(\al)$ we use the inequality for $n+1$, obtaining:
$$
n(\al) +1 > \frac{x_{n+1}-x_0}{L_1 C\al}\ge \frac{x_F(T)-x_0}{L_1 C\al}.
$$
\end{proof}


Next lemma is devoted to bound the error:

$$
\eps _i= x_F(S(x_0)+S(x_1)+\cdots +S(x_{i-1}))-x_i
$$
where $x_F(t)$ the solution of the Filippov system \eqref{eq:simplifiedF},
$S(x_l)=T^-(x_l)+2\al$ is the time needed in the $l$- hysteretic cycle,
and $T^-(x_l)$ is the time needed by the solution of $(x^-(t), y^-(t))$ with initial condition $(x_l, -\al)$ to get to $y=\al$.

\begin{lemma}\label{lem:induction}
The error at the $i$-cycle, $1\le i \le n(\al)$ satisfies:
$$
|\eps _i| \le (1+\al L) |\eps _{i-1} + L\al ^2
$$
where $L$ is uniform in the compact $\KK$
\end{lemma}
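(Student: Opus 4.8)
The plan is to read Lemma \ref{lem:induction} as the standard one-step error estimate for an Euler-type scheme, in which the ``exact flow'' is the Filippov flow of \eqref{eq:simplifiedF} and one ``time step'' is a single hysteretic cycle. Write $t_i=\sum_{l=0}^{i-1}S(x_l)$ for the time elapsed after $i$ cycles, so that by definition $\eps_i=x_F(t_i)-x_i$ and $\eps_{i-1}=x_F(t_{i-1})-x_{i-1}$. A preliminary remark handles the bookkeeping of constants: since $f_F>0$ the iterates are increasing, and for $i\le n(\al)$ the very definition of $n(\al)$ in Lemma \ref{lem:numbercycles} gives $x_0\le x_1\le\cdots\le x_{n(\al)-1}\le x_F(T)$, so every $x_{i-1}$ that we touch lies in $[x_0-\tfrac{\gamma}{2},x_F(T)+\tfrac{\gamma}{2}]$. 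By Lemma \ref{lem:histeresis1} the corresponding cycle stays inside the fixed rectangle $D_\gamma^\al\subset\KK$, which is exactly what makes the constants in Lemmas \ref{lem:histeresis2}--\ref{lem:histeresis4} uniform in $i$ and in $0<\al\le\al_0$.

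The recursion itself is obtained by inserting an intermediate Filippov trajectory. Let $\bar x_F(s)$ be the solution of \eqref{eq:simplifiedF} with $\bar x_F(0)=x_{i-1}$. By the semigroup property, $s\mapsto x_F(t_{i-1}+s)$ is the solution of \eqref{eq:simplifiedF} with initial value $x_F(t_{i-1})=x_{i-1}+\eps_{i-1}$, and $f_F(x)=f^-(x,0)/(g^-(x,0)+1)$ is smooth on $\KK$ with $g^-+1>D+1>0$, hence Lipschitz there with some constant $L$. Gronwall's inequality then gives, for $0\le s\le S(x_{i-1})$,
$$
|x_F(t_{i-1}+s)-\bar x_F(s)|\le \mathrm{e}^{Ls}\,|\eps_{i-1}|.
$$
Evaluating at $s=S(x_{i-1})=T^-(x_{i-1})+2\al=\OO(\al)$ by Lemma \ref{lem:histeresis2}, and using $\mathrm{e}^{L\al}\le 1+2L\al$ for $\al$ small (absorbing the factor $2$ into $L$), we get $|x_F(t_i)-\bar x_F(S(x_{i-1}))|\le (1+\al L)\,|\eps_{i-1}|$. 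For the local term, Lemma \ref{lem:histeresis4} applied with $\bar x=x_{i-1}$ gives directly $|\bar x_F(S(x_{i-1}))-x_i|=\OO(\al^2)$, uniformly in $D_\gamma^\al$.

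Combining the two by the triangle inequality yields
$$
|\eps_i|=|x_F(t_i)-x_i|\le |x_F(t_i)-\bar x_F(S(x_{i-1}))|+|\bar x_F(S(x_{i-1}))-x_i|\le (1+\al L)\,|\eps_{i-1}|+L\al^2,
$$
which is the claimed estimate (after renaming the two occurrences of the constant to a common $L$). The only genuinely delicate point is the uniformity: one must ensure that the time bound of Lemma \ref{lem:histeresis2}, the asymptotics of Lemma \ref{lem:histeresis3}, and the $\OO(\al^2)$ bound of Lemma \ref{lem:histeresis4} all hold with a single constant valid for every cycle $i\le n(\al)$ and for all small $\al$; this is what the preliminary observation (the iterates stay trapped in $D_\gamma^\al$, using monotonicity of the $x_i$ and $n(\al)=\OO(1/\al)$) secures. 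A minor technical caveat is that $t_i$ may slightly exceed $T$ at the last cycle, so one extends $x_F$ a little past $t=T$, which is possible by continuity since $|x_F(t)|<M$ on $[0,T]$.
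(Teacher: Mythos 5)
Your proof is correct, and it reaches the estimate by a somewhat different route than the paper. The paper's proof works directly with Taylor expansions of both solutions over one cycle: it expands $x_F(\bar S_i)$ about $\bar S_{i-1}$, writes $\dot x_F(\bar S_{i-1})=f_F(x_{i-1})+f_F'(\xi)\eps_{i-1}$ by the mean value theorem, expands the hysteretic increment $x_i-x_{i-1}$ using $X^-$, and then subtracts, invoking the asymptotics $T^-(x_{i-1})=\frac{2\al}{g^-(x_{i-1},0)}+\OO(\al^2)$ of Lemma \ref{lem:histeresis3} to show that the leading terms cancel up to $\OO(\al^2)$ (the remainders $M_F$, $M_h$ supplying the $(1+L\al)$ and $L\al^2$ contributions); notably it never cites Lemma \ref{lem:histeresis4}, whose content it effectively re-derives in-line. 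You instead use the classical one-step error decomposition: insert the auxiliary Filippov trajectory $\bar x_F$ started at $x_{i-1}$, control the propagated error $|x_F(t_{i-1}+s)-\bar x_F(s)|\le e^{Ls}|\eps_{i-1}|\le(1+L\al)|\eps_{i-1}|$ by the Lipschitz continuity of $f_F$ (well defined since $g^-+1>1$ on $\KK$) together with $S(x_{i-1})=\OO(\al)$ from Lemma \ref{lem:histeresis2}, and quote Lemma \ref{lem:histeresis4} verbatim for the local term $|\bar x_F(S(x_{i-1}))-x_i|=\OO(\al^2)$. This buys a shorter argument that makes explicit why the $\al^2$ one-cycle closeness emphasized in the paper's preceding Remark is exactly what the induction needs, at the cost of invoking Gronwall rather than a single mean-value estimate; your preliminary monotonicity argument ($x_0\le x_1\le\cdots\le x_{n(\al)}\le x_F(T)$, keeping every cycle in $D_\gamma^\al$ so that the constants of Lemmas \ref{lem:histeresis1}--\ref{lem:histeresis4} are uniform in $i$ and $\al$), and your caveat that $t_i$ may exceed $T$ by $\OO(\al)$ at the last cycle so $x_F$ must be extended slightly, address points the paper leaves implicit.
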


\begin{proof}
We denote as:
$$
\bar T_i = T ^-(x_0)+\dots +T^- (x_{i-1}), \ \mbox{and} \ \bar S_i=\bar T_i +i 2\al, \quad i\ge 1
$$
Note that, for $i\ge 2$:
$\bar S_i -\bar S_{i-1}= S(x_{i-1})=T^- (x_{i-1})+2\al$.

We must estimate
$$
\eps _i= x_F(\bar S_i) - x_h(\bar S_i)
$$

Using Taylor's theorem, one has:
$$
x_F(\bar S_i) =x_F(\bar S_{i-1}) +\dot x_F(\bar S_{i-1})(T^-(x_{i-1})+2\al)+
\frac{\ddot x_F(\bar T_{i}^*)}{2}(T^-(x_{i-1})+2\al)^2 .
$$
with
$\bar S_{i-1} \le \bar T_i^* \le \bar S_{i}$, for any $1\le i \le n(\al)$.
Moreover
$$
\dot x_F(\bar S_{i-1})=
f_F(x_F(\bar S_{i-1}))
= f_F(x_{i-1})+f'_F(\xi) \eps _{i-1}, \ x_{i-1}\le \xi \le x_F(\bar S_{i-1})
$$
where  $f_F$ is given in \eqref{eq:simplifiedF}. Then:
\begin{eqnarray}
x_F(\bar S_i) &=&
x_F(\bar S_{i-1}) +
\frac{f^-(x_{i-1} ,0)}{1+g^-(x_{i-1},0)}
(T^-(x_{i-1}) +2\al)\nonumber \\
&+&
M_F .
\label{xf}
\end{eqnarray}
Where
\begin{equation}\label{MF}
M_F=
f'_F(\xi)\eps _{i-1} (T^- (x_{i-1})+2\al) +\frac{\ddot x_F(\bar T_i^*)}{2}(T^- (x_{i-1})+2\al) ^2
\end{equation}
and therefore, by bounds \eqref{eq:boundsfg}
$$
|M_F| \le L(\eps _{i-1}\al + \al^2).
$$

Now we proceed analogously with the hysteretic solution.

We know that $x_i:=x_h(\bar T _i + i 2\al ) = x^-(T^- (x_{i-1}))$, where $(x^-(t), y^-(t))$ is the solution of $X^-$ with initial condition $(x_{i-1}, -\al)$.
We can use again a Taylor's theorem:
\begin{eqnarray}
x_i&=& x^-(0)+\dot x^-(0) T^- (x_{i-1})+\frac{\ddot x^-(\eta)}{2}(T^- (x_{i-1}))^2 \nonumber
\\
&=&x_{i-1}+f^-(x_{i-1},-\al)T^- (x_{i-1})+\frac{\ddot x^-(\eta)}{2}(T^- (x_{i-1}))^2 \nonumber \\
&=&
x_{i-1}+f^-(x_{i-1},0)T^- (x_{i-1}) + M_h \label{xh}
\end{eqnarray}
where $0\le \eta\le T^- (x_{i-1})$, and therefore
\begin{equation}\label{Mh}
M_h =-\frac{\partial f^-}{\partial y} (x_{i-1},\al ^*)\al T^- (x_{i-1})+
\frac{\ddot x^-(\eta)}{2}(T^- (x_{i-1}))^2
\end{equation}
with $0\le \al ^*\le \al$,
and
$$
|M_h|\le L \al^2.
$$

Subtracting \eqref{xf} and \eqref{xh}
we obtain:
$$
\eps _i =\eps _{i-1} + \frac{f^-(x_{i-1},0)}{1+g^-(x_{i-1},0)}(T^- (x_{i-1})+2\al)-f^-(x_{i-1},0)T^-(x_{i-1})
+M_F-M_h
$$
and, using the formula for $T^-(x_{i})$ given in \eqref{eq:asymptoticsT}, we have
$$
|\frac{f^-(x_{i-1},0)}{1+g^-(x_{i-1},0)}(T^- (x_{i-1})+2\al)-f^-(x_{i-1},0)T^-(x_{i-1})| \le L\al ^2
$$
which gives:
$$
|\eps _i|\le (1+L\al) |\eps_{i-1}| + L\al ^2.
$$
\end{proof}

\noindent{\textbf{Proof of proposition \ref{prop:error}}}

The result of the Lemma \ref{lem:induction} gives:
$$
|\eps_i|\le \left( (1+L\al)^i-1\right)\al \le \left( (1+L\al)^{n(\al)}-1\right)\al
$$
Now, using the estimate for $n(\al)$ given in lemma \ref{lem:histeresis4} we get
$$
\eps _{n(\al)} \le
\left( (1+L\al)^{\frac{x_F(T)-x_0}{L_2D\al}}-1\right)\al
$$
and using that $\lim _{\al \to 0}(1+L\al)^{\frac{\beta}{\al}}=e^{\beta L}$
one gets that there exists $L>0$ such that
$$
\eps _{n(\al)} \le
L\al .
$$
Let's $t\in [0,T]$, take $0\le i\le n(\al)$ such that $t\in [\bar S_i,\bar S_{i+1}]$. We have then that $t=\bar S_i+ \OO(\al)$, and therefore:
$$
x_F(t)= x_F(\bar S_i) +\OO(\al), \quad x_h(t)=x_i+ \OO(\al).
$$
Using that $\eps_i= x_F(\bar S_i) -x_i =\OO(\al)$, extends the estimates for all $t$.
\qed

\newpage
\section{Proof of Theorem \ref{proput}: smoothing the step in equation \eref{hyst0} gives nonlinear sliding to $\OO(\al)$ }
\label{sec:sproof}

Let $x_0=x_U(0)\in(-M,+M)$ and $y_0\in\sq{-\alpha,\alpha}$,
and consider the smooth system \eref{Ns} where
$u=\phi(y/\alpha)$ using the function $\phi$ defined in \eref{phi}.
We have
\begin{equation}
\begin{array}{rcl}
\dot x&=&f(x,y;\phi(y/\alpha))\;\\
\dot y&=&g(x,y;\phi(y/\alpha))\;
\end{array}
\end{equation}
Let $v=y/\alpha$ to obtain the slow subsystem
\begin{equation}
\begin{array}{rcl}
\dot x &=& f(x,\alpha v;\phi(v))\;\\
\alpha\dot v &=&g(x,\alpha v;\phi(v))\;
\end{array}
\end{equation}
with critical limit
\begin{equation}\label{eq:criticalimit}
\begin{array}{rcl}
\dot x&=&f(x,0;\phi( v))\\\
0&=&g(x,0;\phi( v))\ \;
\end{array}
\end{equation}
Consider also the fast subsystem, obtained by denoting the derivative with respect to $\tau= t/\alpha$ with a prime, so
\begin{equation}\label{eq:fastsubsystem}
\begin{array}{rcl}
x'&=&\alpha f(x,\alpha v;\phi( v))\;\\
 v'&=&g(x,\alpha v;\phi( v))\;
\end{array}
\end{equation}
Then assuming $\phi'( v)>0$, and, by \eqref{g}, $\partial g(x,0;u)/\partial u<0$, which imply
\begin{equation}
\frac{\partial\;}{\partial v}g(x,0;\phi( v))=\phi'( v)\frac{\partial\;}{\partial u}g(x,0;\phi( v))<0
\end{equation}
by the inverse function theorem there exists a graph $ v=\gamma_0(x)$ such that
$$
0=g(x,0;\phi(\gamma_0(x)))\;,
$$
and a critical manifold
\begin{equation}
\op U^0=\cc{(x, v):\; v=\gamma_0(x)\;, |x|\le M}\;.
\end{equation}
Moreover, the dynamics of system \eqref{eq:criticalimit} in this manifold is exactly the Utkin equivalent control of Definition \ref{def:utkin}: $x_U(t)$.

$\op U^0$ is the set of equilibria of the fast subsystem \eqref{eq:fastsubsystem} in the critical limit $\alpha=0$, satisfying the system
\begin{equation}
\begin{array}{rcl}
x'&=&0\\
v'&=&g(x,0;\phi(v))\;
\end{array}
\end{equation}
and it is an
attracting normally hyperbolic manifold of the one-dimensional system in $v$,
since $\frac{\partial\;}{\partial v}g(x,0;\phi(v))<0$.
Hence by Fenichel Theorem for $\alpha>0$ there exist invariant smooth manifolds $\op U^\alpha$ which lie $\alpha$-close to $\op U^0$.
More concretely:
\begin{equation}
\op U^\al=\cc{(x, v):\; v=\gamma(x;\al)\;, |x|\le M}\;, \ \gamma (x,\al)=\gamma_0(x) +\OO(\al).
\end{equation}
Take a solution $(x(t), v(t))$, where $(x(0), v(0))=(x_0, v_0)$, $v_0 \in [-1,1]$.
As the slow vector field points inwards on the borders $v=\pm1$, one can easily see that the solutions enter the basin of exponential attraction by the Fenichel
manifold (see \cite{BonetS16}), and therefore one has that there exists constants $L_1>0$, $L_2>0$,
$$
|x(t)-x_\al(t)| \le L_1 e^{-L_2 t/\al}, \ t\ge 0
$$
where $(x_\al (t), \gamma (x_\al (t),\al))$ is the solution along the Fenichel manifold begining at $(x_0,  \gamma (x_0,\al))$.
Now, using that $x_\al(t)=x_U(t)+\OO(\al)$ and going back to variables $(x,y)$, with $y= \al v$, we obtain the desired result;
a solution of the smoothed system $\bb{x(t),y(t)}$ for $t\in\sq{0,T}$ such that $x(0)=x_0$ and $y(0)=y_0$ and $|y_0|\le\al$ satisfies
\begin{equation}
x(t)=x_U(t)+\OO(\alpha)\;.
\end{equation}

\newpage
\section{Proof of Theorem \ref{thm:filalpha}: relaxation gives linear sliding to $\ord{\alpha}$ for $\eps,\alpha>0$.}\label{sec:fproof}


Take $\alpha_1 >0$ and $0<\delta_1<\frac{1}{4}$ fixed.

We will take the variables $(x,y,u)$, where $y=\alpha v$, in a compact set
$$
\KK=[-M,M]\times [-\alpha_1,\alpha_1] \times [-1-\delta_1,1+\delta_1].
$$
During the proof we will consider solutions of system
\eref{slow} with $y= \alpha v$ which never leave this compact set.
Therefore we can assume that there exists a constant $C$ such that, for $\al <\al_1$:

\begin{equation}\label{fitesfgK}
|f(x,y,u)|\le C, \ |g(x,y,u)|\le C, \ \forall (x,y,u) \in \KK
\end{equation}

Moreover, during this proof we will denote by $L$ any constant only depending of the vector field \eqref{bslow} and its derivatives in this compact.

We will assume, by the hypotheses \eqref{g} on $g$, that this function changes its sign at $u=0$, which is not a restrictive assumption.
Therefore, one can ensure that
\begin{eqnarray}
&& g(x,y,u)\ge 0, \ \forall (x,y,u) \in \KK, \ -1-\delta_1 \le u \le 0 \label{fitesgsigne}
\\
&& g(x,y,u)\le 0, \ \forall (x,y,u) \in \KK, \
0\le u \le 1+\delta_1 \nonumber
\end{eqnarray}

\subsubsection{A positively invariant annulus}

We will define a subset of $\KK$ such that the vector field \eref{slow} points inwards in all its borders except, eventually, at $x=\pm M$.
This will allow us to control the solutions.

During this section we will take $0<\al<\al_1$ small enough, $\eps = \kappa \al$ for some $0<\kappa<\frac{1}{4}$ small
and $0< \dd\le \delta_1$, in such a way that
one has uniform bounds $\forall (x,y,u) \in \KK$:
\begin{eqnarray}
0&<& D\le g(x,y,u) \le C, \
-1-\delta_1\le u \le
-1+\dd+2\kappa \label{fitesfgsigne+}\\
-C&\le & g(x,y,u)< -D <0, \
1-\dd-2\kappa\le u \le 1+\delta_1 \label{fitesfgsigne-}
\end{eqnarray}

To proof Theorem \ref{thm:filalpha} (and later Theorem \ref{thm:utkinalpha}),
we introduce a scaled variable $v=y/|\alpha|$ and using \eref{kappa} to eliminate $\eps$ in \eref{slow},
we will work with the following system:
\begin{eqnarray}\label{bslow}
\begin{array}{rll}
\dot x&=&f\bb{x,|\alpha| v;u}\;,\\
|\alpha|\dot v&=&g\bb{x,|\alpha| v;u}\;,\\
\kappa\alpha\dot u&=&\phi\bb{\frac{u+{\rm sign}(\alpha)v}\kappa}-u\;.
\end{array}
\end{eqnarray}


Consider the planes $v+ u=\pm \kappa$.
These planes play a crucial role in the dynamics because bellow the plane $v+ u=-\kappa$ the function $\phi(\frac{u+v}{\kappa})=-1$ and
therefore the equation for the variable $u$ is given by
$$
\kappa \al \dot u =-1-u.
$$
Analogously, above $v+ u= \kappa$
is given by
$$
\kappa \al  \dot u =1-u .
$$

The situation is then the following:
The sets
$$
\{(x,v,u), -1 +\kappa < v, \  |x|< M, \ u=1\}
$$
and
$$
 \{(x,v,u), v<1 +\kappa, \ |x|< M, \ u=-1\}
$$
are locally invariant by the flow of system
\eref{bslow} and attracting.

To define a positively invariant annulus, the first observation is that, as $-1\le \phi\le 1$,  the planes $u=\pm (1+\dd)$
confine the flow in $-1-\dd \le u\le 1+\dd$.

We will now build an invariant annulus.
Choose a value of $K>0$ big, depending only of the bounds \eqref{fitesfgK} but independent of $\kappa$, $\alpha$ and $\dd$.
We will fix $K$ satisfying these conditions in next Proposition.
Consider the following plane segments:
\begin{equation}
\begin{array}{rcl}
r_1&\!\!\!\!=\!\!\!\!&\{(x,\alpha v,u) \in \KK \ -1-\dd-\kappa \le v \le 1+ \dd +\kappa +K\kappa , \ u=1+\dd\} \\
r_2&\!\!\!\!=\!\!\!\!&\{(x,\alpha v,u) \in \KK \ u-1-\dd = \frac{2(v+1 +\dd+\kappa)}{K\kappa},\, -1+\dd \le u \le 1+\dd\} \\
r_3&\!\!\!\!=\!\!\!\!&\{(x,\alpha v,u) \in \KK \ v=-1 - \dd -\kappa-\kappa K , \ -1-\dd \le u\le -1+\dd\}\\
r_4&\!\!\!\!=\!\!\!\!&\{(x,\alpha v,u) \in \KK \ -1 - \dd -\kappa-\kappa K \le v \le 1+ \dd + \kappa , \ u=-1-\dd\} \\
r_5&\!\!\!\!=\!\!\!\!&\{(x,\alpha v,u) \in \KK \ u+1+\dd = \frac{2(v-1 - \dd-\kappa)}{K\kappa}, \, -1-\dd \le u\le 1-\dd \}\\
r_6&\!\!\!\!=\!\!\!\!&\{(x,\alpha v,u) \in \KK \ v = 1+\dd +\kappa + \kappa K , \ 1-\dd \le u\le 1+\dd\}
\end{array}
\end{equation}
\begin{proposition}\label{blockinv}
Take $0<\dd\le \delta_1$, $0<\kappa<\frac{1}{4}$ and $K=2C+1$, where $C$ is given in \eqref{fitesfgK}.
Consider the annulus $\bf{A}\subset \KK$ whose exterior border is given by $r_1\cup\dots \cup r_6\cup \{x=\pm M\}$.
Then there exists
$0<\al _0<\al_1$, only depending on the constant $C$ appearing in \eqref{fitesfgK} and $\al_1$,
that for $0<\al \le \al_0$,  and  $\eps =\kappa \al$
any solution of system \eref{bslow} beginning in $\bf{A}$ only can leave it through the borders $x=\pm M$.
\end{proposition}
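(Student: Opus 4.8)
The plan is to verify directly that on each of the six boundary pieces $r_1,\dots,r_6$ the vector field \eqref{bslow} (equivalently \eqref{slow}) points into the annulus $\bf A$, so that no trajectory can cross them; the only possible exits are then through $x=\pm M$. The natural tool is to write down, for each $r_i$, an outward normal (or, for the slanted faces $r_2,r_5$, the defining linear functional) and check the sign of its derivative along the flow. Because the $u$-equation carries the singular factor $\kappa\alpha$, the $\dot u$ component is of order $1/(\kappa\alpha)$ and will dominate the $O(1)$ contributions from $\dot x,\dot v$ on the faces where $u=\pm(1+\delta_0)$ or where $u$ is bounded away from $\pm1$; this is why $K$ can be chosen independently of $\kappa,\alpha,\delta_0$.

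First I would treat the ``flat'' faces $r_1$ (where $u=1+\delta_0$) and $r_4$ (where $u=-1-\delta_0$). On $r_1$ we are above the plane $v+u=\kappa$ (since there $v\ge -1-\delta_0-\kappa$ gives $v+u\ge -\delta_0+\kappa$... more carefully, on $r_1$ one has $u=1+\delta_0$ and $v\ge -1-\delta_0-\kappa$, so $v+u\ge -\kappa$; one checks that in fact $\phi((u+v)/\kappa)\le 1$ always, hence $\kappa\alpha\dot u=\phi-u\le 1-(1+\delta_0)=-\delta_0<0$), so $\dot u<0$ and the flow enters; symmetrically on $r_4$ with $\dot u>0$. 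Next, the faces $r_3$ (where $v=-1-\delta_0-\kappa-\kappa K$, $u\in[-1-\delta_0,-1+\delta_0]$) and $r_6$: here $v+u\le -\kappa-\kappa K+ \delta_0 <-\kappa$ for $\kappa$ small enough relative to... actually one needs $\kappa K>\delta_0-\kappa$, which holds since $\delta_0\le\delta_1<1/4$ and $K=2C+1\ge 1$; hence $\phi((u+v)/\kappa)=-1$ and $\kappa\alpha\dot u=-1-u\ge \delta_0>0$ on $r_3$ — but $r_3$ is a face with outward normal in the $-v$ direction, so we need $\dot v>0$ there. From the second equation $|\alpha|\dot v=g(x,|\alpha|v;u)$, and on $r_3$ we have $u\le -1+\delta_0<0$, so by \eqref{fitesgsigne} (or \eqref{fitesfgsigne+}, using $\delta_0\le\delta_1$ and $2\kappa<1/2$ suitably) $g\ge D>0$, giving $\dot v\ge D/|\alpha|>0$: the flow enters through $r_3$. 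Symmetrically through $r_6$.

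The slanted faces $r_2$ and $r_5$ are the crux. On $r_2$ the defining functional is $h(v,u)=u-1-\delta_0-\tfrac{2}{K\kappa}(v+1+\delta_0+\kappa)$, vanishing on $r_2$, with $\bf A$ on the side $h\le 0$; I would compute $\dot h=\dot u-\tfrac{2}{K\kappa}\dot v=\tfrac{1}{\kappa\alpha}(\phi((u+v)/\kappa)-u)-\tfrac{2}{K\kappa|\alpha|}g(x,|\alpha|v;u)$ and show $\dot h\le 0$ on $r_2$. On $r_2$ we have $u<1-\delta_0-2\kappa$ wait — one should check the range: on $r_2$, $u\in[-1+\delta_0,1+\delta_0]$ and $v=-1-\delta_0-\kappa+\tfrac{K\kappa}{2}(u-1-\delta_0)$, so along $r_2$ one finds $v+u$ ranges so that $\phi((u+v)/\kappa)=-1$ on the lower portion (where $u$ small) transitioning; the key estimate is that $\phi-u\le 1-u$ always and, crucially, that where the face lies we can bound $g\ge D>0$ because $u$ stays $\le$ some value $<0$... this requires care: on the part of $r_2$ with $u$ near $1+\delta_0$ the geometry forces $v$ near $-1-\delta_0-\kappa+K\kappa(\delta_0+\kappa)$ which is still negative, but $g$ may have either sign there. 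The honest statement is that one splits $r_2$ into the part with $v+u\le -\kappa$ (where $\phi=-1$, so $\kappa\alpha\dot u=-1-u$, which is $\le -2+\delta_0<0$, dominating the $O(1/|\alpha|)$ term in $\dot v$ once $\alpha_0$ is small, since $1/(\kappa\alpha)$ beats $1/|\alpha|$ for $\kappa$ bounded) and the part with $v+u>-\kappa$; on the latter $u<0$ is forced by the line's geometry when $\kappa,\delta_0$ small (the intersection of $v+u=-\kappa$ with $r_2$ occurs at $u$ slightly positive... actually at $u$ where $\tfrac{K\kappa}{2}(u-1-\delta_0)=\kappa$, i.e. $u=1+\delta_0+2/K<$ something — for $K=2C+1$ large this is close to $1+\delta_0$), so in fact the whole transition region has $u$ close to $1+\delta_0$; there I use $\phi\le 1$ so $\kappa\alpha\dot u\le -\delta_0$. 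In all cases $\dot h<0$ for $\alpha_0$ small enough depending only on $C$ (hence on $K$) and $\alpha_1$. The face $r_5$ is handled by the symmetry $(v,u)\mapsto(-v,-u)$, $g\mapsto -g$.

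I expect the main obstacle to be exactly this bookkeeping on $r_2$ and $r_5$: one must verify that the chosen slope $2/(K\kappa)$ is shallow enough that the (large, favorable) $\dot u$ term always beats the $\dot v$ term of size $C/|\alpha|$, and simultaneously steep enough that the face connects $u=1+\delta_0$ at $v\approx -1-\delta_0-\kappa$ down to $u=1-\delta_0$ at $v\approx 1+\delta_0+\kappa+K\kappa$, matching $r_4$ and $r_6$ consistently. The condition $K=2C+1$ is what makes both demands compatible, and checking it is the one genuinely delicate computation; everything else is sign-chasing using \eqref{fitesgsigne}, \eqref{fitesfgsigne+}, \eqref{fitesfgsigne-}, and $|\phi|\le 1$. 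The role of taking $\alpha_0$ small (depending only on $C$ and $\alpha_1$) is solely to ensure the $\dot u$-terms of order $1/(\kappa\alpha)$ dominate the $\dot v$-terms of order $1/|\alpha|$ uniformly, which works because $\kappa<1/4$ keeps $1/(\kappa\alpha)\ge 4/|\alpha|$.
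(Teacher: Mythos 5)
Your overall strategy is the same as the paper's: show the field \eref{bslow} points inwards across each of $r_1,\dots,r_6$ by checking the sign of the normal component, using that $\phi-u$ has a definite sign on $u=\pm(1+\dd)$ and that $g$ has a definite sign on the vertical faces, with the slanted faces $r_2,r_5$ as the delicate step. Your handling of $r_1,r_3,r_4,r_6$ is essentially correct (the momentary claim $\kappa\al\dot u=-1-u\ge\dd$ on $r_3$ is false, since $-1-u\in[-\dd,\dd]$ there, but you rightly discard it and use $\dot v=g/\al>0$ instead). The gap is precisely on $r_2$ (and $r_5$), which is where the choice $K=2C+1$ must actually be used, and the estimates you sketch there do not close.

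First, the domination mechanism you invoke is not available: in $\dot h=\frac{1}{\kappa\al}\bb{\phi-u}-\frac{2}{K\kappa\al}\,g$ both terms are of order $\frac{1}{\kappa\al}$, because the normal's $v$-component carries the factor $\frac{2}{K\kappa}$; there is no separation between a $\frac1{\kappa\al}$ term and a $\frac1\al$ term, so neither smallness of $\al_0$ nor $\kappa<\frac14$ makes the $u$-term win (in the paper $\al_0=\al_1/(2+K)$ serves only to keep $|\al v|\le\al_1$, i.e.\ to stay inside $\KK$ so that \eqref{fitesfgK} applies). Second, your split of $r_2$ is based on a wrong picture: along $r_2$ one has $v+u=-\kappa+\bb{1+\frac{K\kappa}{2}}(u-1-\dd)\le-\kappa$, with equality only at the corner $u=1+\dd$, so $\phi=-1$ on the whole face and $\kappa\al\dot u=-1-u$ ranges over $[-2-\dd,-\dd]$; your bound $-1-u\le-2+\dd$ fails on the lower part of the face, and on the portion where you fall back on $\phi\le1$ the resulting $-\dd/(\kappa\al)$ cannot beat the $g$-contribution $\frac{2C}{K\kappa\al}$, since $\dd$ may be far smaller than $2C/K$ (e.g.\ $\dd=\al^2$ in Corollary \ref{cor:filalpha}). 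What closes the estimate is a case split in $u$: for $0\le u\le1+\dd$ one has $g\le0$ but $|g|\le C$ and $1+u\ge1>\frac{2C}{K}$, which is exactly where $K=2C+1$ enters; for $-1+\dd\le u\le0$ one has $g\ge0$ by \eqref{fitesgsigne}, so the $g$-term has the favourable sign and $-\frac{1+u}{\kappa\al}\le-\frac{\dd}{\kappa\al}<0$ suffices. Without this two-case argument (the paper's computation of the scalar product $A$ on $r_2$, and its mirror image on $r_5$), the inward-pointing property on the slanted faces is not established.
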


As any point in the annulus has $v$ coordinates satisfying
$|v|\le 1+\dd+\kappa+\kappa K\le 2+ K$ we choose $\al_0=\frac{\al_1}{2+K}$
and one can ensure that
$g(x,\al v,u)$ verifies bounds \eqref{fitesfgK}
for $(x,v,u)\in \bf{A}$.
The annulus is shown in \fref{fig:block}.
\begin{figure}[h!]\centering\includegraphics[width=1\textwidth]{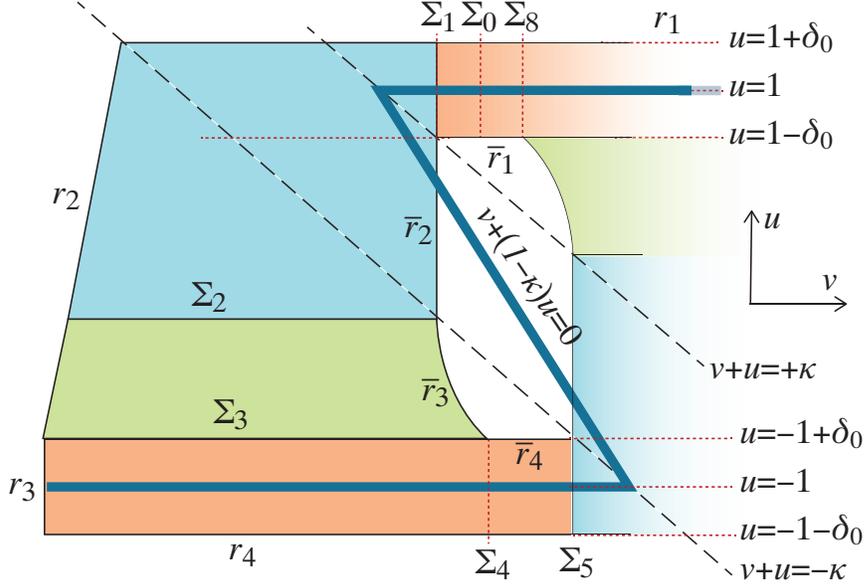}
\vspace{-0.3cm}\caption{\footnotesize\sf The positively invariant annulus $\bf A$.}
\label{fig:block}
\end{figure}

It is clear that the flow points inwards in $r_1$ and $r_4$.
To see that it also points inwards along $r_2$ we need that the scalar product:
\begin{eqnarray*}
A&=&<(0,-\frac{2}{K\kappa},1), (f(x,y,u),\frac{g(x,\al v,u)}{\al}, -\frac{1+u}{\kappa \al })> =\\&&
-\frac{2}{K\kappa \al }g(x,\al v,u)-\frac{1+u}{\kappa \al } <0,
\end{eqnarray*}
for $-1+\dd \le u \le 1+\dd$.
When $0 \le u \le 1+\dd$, we have that:
$$
-2-\dd \le -1-u\le -1
$$
and therefore $
A\le
\frac{2C}{K\kappa \al }-\frac{1}{\kappa \al }
$.
If we take now any $K>2C$ we have that $A<0$. We will choose from now on
\begin{equation}\label{K}
K=2C+1.
\end{equation}
When $-1+\dd \le u\le 0$, we have know that, by \eqref{fitesgsigne} $g\ge 0$ and therefore:
$$
A=
-\frac{2}{K\kappa \al }g(x,\al v,u)-\frac{1+u}{\kappa \al } \le -\frac{1+u}{\kappa \al }\le -\frac{\dd}{\kappa \al } <0.
$$
therefore, $A<0$ along $r_2$.
An analogous reasoning gives that the flow points inwards $\bf {A}$ along $r_5$.

Along $r_3$ we use that $|u+1|\le \dd$, $|x| \le M$ and
$|\al v|
\le \alpha _1$.
Consequently $g(x,\al v,u)$ satisfies \eqref{fitesgsigne} and $\dot y = g(x,\al v,u)>0$ and the flow points inwards $r_3$.
An analogous reasoning gives that the flow points inwards $\bf {A}$ along $r_6$.

\qed

Next step is to build an interior border in $\bf {A}$ to define a positively invariant annulus.

We begin our construction by defining the segment:
\begin{equation}
\bar r_2 = \{(x,\alpha v,u)\in \KK, \ v= -1+ \dd+\kappa , \ 1-\dd-2\kappa\le u \le 1-\dd\}
\end{equation}
Then we take the lower points in $\bar r_2$: $(x,-1+ \dd+\kappa, 1-\dd-2\kappa)$
and we define the next interior border by taking the flow $\varphi(t;x,v,u)$ through these points until it arrives to $u=-1+\dd$.
Observe that, in this region $\eps \dot u= -1-u$ and therefore, one can explicitly compute this time, which independent of the initial value $x$, obtaining
\begin{equation}\label{eq:T}
T_f= -\kappa \al  \log {\frac{\dd}{2-\dd-2\kappa}}.
\end{equation}
Then the equation for the interior border $\bar r_3$ reads:
\begin{eqnarray}
\bar r_3 &=&
\big\{\varphi(t;x,-1+\dd+\kappa, 1-\dd-2\kappa), \ -1+\dd \le u \le 1-\dd-2\kappa, \nonumber\\
&&|x|<M,\ 0\le t\le T_f\big\} \cap \KK
\end{eqnarray}
Now, lets call
$
v_3(x)\le \varphi _v (T_f;x,-1+\dd+\kappa, 1-\dd-2\kappa)
$
the $v$ coordinate of the end points of the surface $\bar r_3$.
Observe that, as the function $g$ satisfies bounds \eqref{fitesfgK}, we have:
$$
v_3(x)\le  -1+\dd+\kappa + \frac{C}{\al}T_f\le -1+\dd+\kappa-C \kappa \log {\frac{\dd}{2-\dd-2\kappa}} :=V_3
$$
Now we can and we define our next border by:
$$
\bar r_4 = \{(x,\alpha v,u)\in \KK, \ \ v_3(x)  \le v \le 1 -\dd- \kappa , \ u=-1+\dd\}.
$$
Analogously to $\bar r_2$ we define the next border:
\begin{equation}
\bar r_5 = \{(x,\alpha v,u)\in \KK, \ v= 1-\dd-\kappa , \ -1+\dd \le u \le -1+\dd+2\kappa \} \\
\end{equation}
Finally, we use again the flow $\varphi(t;x,v,u)$ begining at points $(x, 1- \dd-\kappa, -1+\dd+2\kappa)$ until it arrives to $u=1-\dd$
at time $T_f$ as in \eqref{eq:T} to define the last border:
\begin{eqnarray}
\bar r_6 &=&
\big\{\varphi(t;x,1-\dd-\kappa, -1+\dd+2\kappa), \ -1+\dd+2\kappa \le u \le 1-\dd, \nonumber \\
&&|x|<M,\ 0\le t\le T_f\big\} \cap \KK
\end{eqnarray}
and, calling $v_6(x)$ the $v$-coordinate of the last points on $\bar r_6$ we define the last border:
$$
\bar r_1 = \{(x,\alpha v,u)\in \KK , \ -1+ \dd +\kappa \le v \le v_6 (x), \ u=1-\dd\}.
$$
Analogously to what we did for $v_3(x)$, we can obtain lower bounds for the values of $v_6(x)$:
$$
v_6 (x)\ge 1-\dd-\kappa +C\kappa \log {\frac{\dd}{2-\dd-2\kappa}} :=V_6
$$


Next proposition shows that these sets are interior borders to the flow.
\begin{proposition}\label{annulusinv}
Consider the same hypotheses as in Proposition \ref{blockinv} and that:
$-C\kappa \log \frac{\dd}{2} <1$,

Consider the annulus $\bf{A}$ whose exterior border is given by $r_1\cup\dots \cup r_6\cup \{x=\pm M\}$ and whose interior border is given by.
$\bar r_1\cup\dots \cup \bar r_6\cup \{x=\pm M\}$.

Then for $0<\al \le \al_0$,
any solution of system \eref{bslow} beginning in $\bf{A}$ only can leave it through the borders $x=\pm M$.
\end{proposition}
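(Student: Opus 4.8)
The plan is to establish that each of the six interior boundary pieces $\bar r_1,\dots,\bar r_6$ is crossed by the flow of \eref{bslow} only in the outward direction (i.e. away from the interior hole and into the annulus $\bf A$), exactly mirroring the argument of Proposition \ref{blockinv} but with the orientation of the normals reversed. As before, the components $x=\pm M$ are the only places a solution may escape. The structure of the argument follows the construction of the borders: the pieces $\bar r_2$, $\bar r_5$ (vertical segments $v=\mathrm{const}$), $\bar r_1$, $\bar r_4$ (horizontal segments $u=\mathrm{const}$ near $u=\pm(1-\dd)$), and $\bar r_3$, $\bar r_6$ (flow-defined surfaces) each need a separate but short computation of a scalar product of the vector field with the appropriate inward normal of $\bf A$.

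First I would handle $\bar r_4$, the segment $u=-1+\dd$ with $v_3(x)\le v\le 1-\dd-\kappa$. Here the inward normal to $\bf A$ points in the direction of decreasing $u$, so we need $\kappa\alpha\dot u=\phi\!\left(\frac{u+\mathrm{sign}(\alpha)v}{\kappa}\right)-u<0$ there. Since on this segment $v\le 1-\dd-\kappa$ gives $u+v\le -\kappa$ (using $u=-1+\dd$), we have $\phi=-1$, hence $\kappa\alpha\dot u=-1-u=-\dd<0$: the flow moves off $\bar r_4$ into the region $u<-1+\dd$, i.e. towards the exterior of the hole, as required. The piece $\bar r_1$ ($u=1-\dd$) is treated symmetrically. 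For the vertical pieces $\bar r_2$ ($v=-1+\dd+\kappa$, $1-\dd-2\kappa\le u\le 1-\dd$) and $\bar r_5$, the inward normal of $\bf A$ is $\pm(0,1,0)$ (in the $(x,v,u)$ ordering), so we need the sign of $\dot v=g(x,\alpha v,u)/|\alpha|$: on $\bar r_2$ we have $u\ge 1-\dd-2\kappa$, so by \eqref{fitesfgsigne-} $g<-D<0$, hence $\dot v<0$ and the flow leaves $\bar r_2$ into $v<-1+\dd+\kappa$, away from the hole. On $\bar r_5$, $u\le -1+\dd+2\kappa$ gives $g>D>0$ by \eqref{fitesfgsigne+}, so $\dot v>0$, again pointing away from the hole.

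The genuinely delicate pieces are $\bar r_3$ and $\bar r_6$, which are built as forward flow images of $\bar r_2$-type segments. I would argue as follows: by construction these surfaces are unions of orbit segments of \eref{bslow} over the time interval $[0,T_f]$, so the flow is \emph{tangent} to them, except at the ``caps'' where the orbit enters (at $u=1-\dd-2\kappa$ on $\bar r_2$) or exits (at $u=-1+\dd$). The entry cap is already part of $\bar r_2$, handled above; the exit cap lies on $u=-1+\dd$, which is $\bar r_4$. Because a solution starting on $\bar r_3$ stays on $\bar r_3$ until its endpoint and then enters the annulus interior through the $\bar r_4$ face (where we have just shown the flow points inward into $\bf A$), no solution can cross $\bar r_3$ outward into the hole. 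The only subtlety is the intersection $\bar r_3\cap\KK$ truncation by $|x|\le M$: here one again invokes that $x=\pm M$ is an allowed escape face, and that the $x$-velocity $f$ is bounded by $C$ so that the surface stays within the computed $v$-range $v\le V_3$ (this is where the hypothesis $-C\kappa\log\frac{\dd}{2}<1$ is used, to guarantee $V_3$, $V_6$ stay inside the $v$-extent of $\bf A$ so the interior border genuinely closes up). The symmetric statement holds for $\bar r_6$ using $\bar r_1$ as its exit face.

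The main obstacle I anticipate is not any single scalar-product computation — those are routine given the sign conditions \eqref{fitesfgsigne+}--\eqref{fitesfgsigne-} — but rather checking that the six interior pieces together with $\{x=\pm M\}$ actually bound a topological annulus, i.e. that the endpoints match up consistently and that $\bar r_3$, $\bar r_6$ do not poke through the outer border $r_1\cup\dots\cup r_6$ or fail to reach it. This is a bookkeeping issue about the geometry of the region, controlled by the explicit bounds $V_3$, $V_6$ and the smallness hypothesis on $\kappa\log\dd$, and it must be verified that for all $|x|\le M$ the $v$-coordinates $v_3(x)$, $v_6(x)$ lie in the admissible range so that the faces $\bar r_1,\bar r_4$ are nonempty and the annulus is well-defined. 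Once the region is confirmed to be a bona fide positively invariant set with escape only through $x=\pm M$, the proof concludes exactly as Proposition \ref{blockinv} does.
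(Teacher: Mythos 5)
Your proposal is correct and follows essentially the same route as the paper: sign checks of $\dot u$ on $\bar r_1,\bar r_4$ (where $\phi=\pm1$ gives $\kappa\al\dot u=\pm\dd$), sign checks of $\dot v=g/\al$ on $\bar r_2,\bar r_5$ via \eqref{fitesfgsigne-}--\eqref{fitesfgsigne+}, the observation that $\bar r_3,\bar r_6$ are flow-invariant by construction so trajectories cannot cross them, and the use of the hypothesis $-C\kappa\log\frac{\dd}{2}<1$ to ensure the interior border closes up (the paper carries out exactly the inequality chain you flag, showing $v_6(x)\ge -1+\dd+\kappa$ so that $\bar r_1$ is well defined).
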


Let's point out that to $\bar r_1$ to be well defined one needs that
$-1+\dd +\kappa \le v_6(x)$. As $v_6 (x) \ge V_6$, sufficient condition will be:
$$
-1+\dd+\kappa<1-\dd-\kappa+C\kappa \log {\frac{\dd}{2-\dd-2\kappa}}
$$
or, equivalently:
$
-C\kappa \log {\frac{\dd}{2-\dd-2\kappa}} < 2(1-\dd-\kappa)
$.

Using the hypotheses on $\dd$ and $\kappa$
this condition is guaranteed because:
$$
-C\kappa \log {\frac{\dd}{2-\dd-2\kappa}} < -C\kappa \log {\frac{\dd}{2}} <1<2(1-\dd-\kappa).
$$
Moreover, as $\bar r_1$ is above $v+ u =\kappa$ we know that $\dot u >0$ and therefore the flow points inwards $\bf{A}$ along it.
The same reasoning works for $\bar r_4$.

Along $\bar r_2$, $1-\dd-2\kappa\le u \le 1-\dd$ and $|\al v|
\le \al_1$ if we choose $0<\al<\al_0$.
Therefore, by \eqref{fitesfgsigne-} we know that $\al \dot v=g(x,\alpha v,u)<0$, and
we can ensure that
the flow points inwards $\bf{A}$ along $\bar r_2$.
The same reasoning works for $\bar r_5$.
By definition, $\bar r_3$ and $\bar r_6$ are invariant by the flow.

In conclusion, any orbit of system \eref{bslow} which enters in the annulus $\bf{A}$ only can leave it through the borders $x=\pm M$.
\qed

\subsubsection{The Poincar\'{e} map}

Now that we have a positively invariant annulus $\bf{A}$, we will see that the $x$ component of the orbits of system \eqref{bslow} follows closely
the orbits of the Filippov vector field \eqref{fil}.
We will follow closely the proof of Theorem \ref{propfil} where we saw that
hysteretic orbits also follow Filippov ones.
We will define a Poincar\'{e} map inside the annulus whose iterates will correspond to the hysteretic cycles.

Next lemma, whose proof is  straightforward, shows   that solutions beginning in $\bf{A}$ need a finite time, independent of $\alpha$ (and consequently on $\eps$),
to leave $\bf{A}$ though $x=\pm M$.

\begin{lemma}\label{lem:cotainftemps}
Take $\gamma>0$ and any point $(x_0,v_0,u_0)\in \bf{A}$ with $x_0 \in [-M+\gamma, M-\gamma]$,
and call $T_s$ the time needed for the flow of system \eqref{bslow} begining at $(x_0,v_0,u_0)$, to get to $x=\pm M$.

Then one has that:
$$
T_s\ge \frac{\gamma}{C}
$$
where $C$ is the constant given in \eqref{fitesfgK}.
\end{lemma}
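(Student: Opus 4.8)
The plan is straightforward: the $x$-component of the velocity field is uniformly bounded by $C$ everywhere the orbit can travel, so crossing the gap of size $\gamma$ between $x_0$ and the exit walls $x=\pm M$ must take time at least $\gamma/C$.

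First I would invoke Proposition \ref{annulusinv}: since $(x_0,v_0,u_0)\in\bf{A}$, the solution $(x(t),v(t),u(t))$ of \eqref{bslow} cannot leave $\bf{A}$ except through $x=\pm M$, so it remains in $\bf{A}\subset\KK$ for all $t\in[0,T_s]$, where $T_s$ is the first time $x(t)\in\{-M,+M\}$ (if the orbit never reaches $x=\pm M$ there is nothing to prove, as the asserted inequality holds vacuously). Consequently the bound \eqref{fitesfgK} applies along the whole orbit up to $T_s$, giving $|\dot x(t)|=|f(x(t),\alpha v(t);u(t))|\le C$ for $0\le t\le T_s$.

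Integrating this estimate yields
$$
|x(T_s)-x_0|\le\int_0^{T_s}|\dot x(t)|\,dt\le C\,T_s .
$$
On the other hand, by hypothesis $x_0\in[-M+\gamma,\,M-\gamma]$, so $x_0$ lies at distance at least $\gamma$ from both $-M$ and $+M$, whereas $x(T_s)\in\{-M,+M\}$; hence $|x(T_s)-x_0|\ge\gamma$. Combining the two inequalities gives $\gamma\le C\,T_s$, that is, $T_s\ge\gamma/C$, as claimed.

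There is no real obstacle here. The only subtlety is that the estimate $|\dot x|\le C$ requires the orbit to stay inside $\KK$ until time $T_s$, which is exactly the positive invariance of the annulus established in Propositions \ref{blockinv} and \ref{annulusinv}; with that in hand the argument is a one-line Gr\"onwall-free integration.
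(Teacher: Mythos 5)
Your argument is correct and is exactly the intended one: the paper itself gives no written proof (it calls the lemma straightforward), and the standard reasoning — the orbit stays in $\bf{A}\subset\KK$ until exiting through $x=\pm M$ by the invariance propositions, $|\dot x|\le C$ there, and the exit requires traversing a distance at least $\gamma$ in $x$ — is precisely what you wrote. Nothing to add.
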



Lets now define the following sections which divide $\bf{A}$ in $8$ pieces, see Figure \ref{fig:block}:
\begin{eqnarray*}
\Sigma_0&=&\{ (x,v,u)\in {\bf{A}}, \ |u-1|\le \dd, \ v=0 \}\\
\Sigma_1&=&\{ (x,v,u)\in {\bf{A}}, \ |u-1|\le \dd, \ v=-1 + \dd+\kappa\}\\
\Sigma_2&=&\{ (x,v,u)\in {\bf{A}}, \ u= 1-\dd-2\kappa, \ v<0\}\\
\Sigma_3&=&\{ (x,v,u)\in {\bf{A}}, \ u= -1+\dd, \ v<0\}\\
\Sigma_4&=&\{ (x,v,u)\in {\bf{A}}, \ |u+1|\le \dd, \ v=-1 +\dd +\kappa -C\kappa \log {\frac{\dd}{2-\dd-2\kappa}}\}\\
\Sigma_5&=&\{ (x,v,u)\in {\bf{A}}, \ |u+1|\le \dd, \ v=1 - \dd-\kappa\}\\
\Sigma_6&=&\{ (x,v,u)\in {\bf{A}}, \ u= -1+\dd+2\kappa, \ v>0\}\\
\Sigma_7&=&\{ (x,v,u)\in {\bf{A}}, \ u= 1-\dd, \ v>0\}\\
\Sigma_8&=&\{ (x,v,u)\in {\bf{A}}, \ |u-1|\le \dd, \ v=1 -\dd -\kappa +C\kappa \log {\frac{\dd}{2-\dd-2\kappa}}\}\\
\end{eqnarray*}
and we will consider a Poincar\'{e} map from $\Sigma_0$ to itself. We want to compare an iterate of this map with the solution of the
Filippov vector field at the same amount of time. This is done in next proposition.

\begin{proposition}\label{prop:filalpha}
Consider the solution $x_F(t)$ of \eqref{xfilippov} such that $x_F(0)=x_0$, which satisfies $|x_F(t)|< M$, for $0\le t\le T$.
Take $\alpha_0$ the constant given in Proposition \ref{blockinv}, $0<\kappa <\frac{1}{4}$ and $0<\dd \le \delta_1 $,
such that $C\kappa |\log \frac{\dd}{2}|\le \frac{1}{2}$.
Then there exists $0<\sigma^*\le \al _0$, such that for
$|\al| \le \sigma ^*$, $0<\dd \le \sigma ^*$, $0<\frac{\dd}{\kappa} \le \sigma ^*$,
for any point $z_0=(x_0,0,u_0) \in \Sigma_0$, there exists a time $T^1$ such that $\varphi(T^1;z_0) \in \Sigma _0$.
Moreover:
\begin{itemize}
 \item
$
T^1 = 2\al +\frac{2\al}{ g(x_0,0,-1)} + \OO(\kappa\al , \frac{ \dd} {\kappa}\al, \kappa \al \log{\frac{\dd}{2}},\al ^2).
$
\item
$
x(T^1)=x_0 + 2\al \frac{f(x_0,0,-1)}{g(x_0,0,-1)}+\OO(\kappa\al , \frac{ \dd} {\kappa}\al, \kappa \al \log{\frac{\dd}{2}},\al ^2)
$
\item
$
x_F(T^1)- x(T^1)= \OO(\kappa\al , \frac{ \dd} {\kappa}\al, \kappa \al \log{\frac{\dd}{2}},\al ^2).
$
\end{itemize}
\end{proposition}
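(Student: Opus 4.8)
The plan is to present the first-return (Poincar\'e) map $\Sigma_0\to\Sigma_0$ as a composition of nine transition maps along the itinerary $\Sigma_0\to\Sigma_1\to\cdots\to\Sigma_8\to\Sigma_0$ and to estimate each factor. By Proposition~\ref{annulusinv} the orbit through $z_0=(x_0,0,u_0)$ cannot leave the annulus $\bf{A}$ except across $x=\pm M$, and since we will see that the total return time is $\ord\al$, Lemma~\ref{lem:cotainftemps}---applied with $\ga$ a fixed fraction of the (positive) distance from $x_0$ to $\pm M$---keeps the orbit in the interior of $\bf{A}$ for one whole cycle. That it actually visits the nine sections in order is a routine sign count using the sign conditions \eqref{fitesfgsigne+}, \eqref{fitesfgsigne-} and the shape of $\phi$: near $u=\pm1$ one has $\mp g>0$, so $v$ decreases (resp. increases) and the orbit is carried along the branches through $\Sigma_0,\Sigma_1$ and $\Sigma_4,\Sigma_5,\Sigma_8$; on entering the strip $|v+u|<\kk$ the value $\phi\bb{\tfrac{v+u}{\kk}}$ drops below $1$ (resp. rises above $-1$), so $\dot u<0$ (resp. $>0$) and $u$ falls (resp. rises) across the switch through $\Sigma_2,\Sigma_3$ and $\Sigma_6,\Sigma_7$; transversality at each $\Sigma_i$ comes from the same inequalities, and $\Sigma_0$ is the first return.

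On the three slow arcs $\Sigma_0\to\Sigma_1$, $\Sigma_4\to\Sigma_5$, $\Sigma_8\to\Sigma_0$ we have $\phi\equiv\pm1$ identically, so $u$ is slaved: $|u(t)\mp1|\le|u(t_0)\mp1|\,\mathrm{e}^{-(t-t_0)/(\kk\al)}$, which is $\le\dd$ at the start of the arc (either $<\dd$ on $\Sigma_0$ or $\le\dd$ at the exit of a fast arc) and hence stays $\le\dd$. Freezing $u$ at $\pm1$ and using the flow-box normalisation inherited from Appendix~\ref{sec:hproof} (so $f(x,0;+1)\equiv0$, $g(x,0;+1)\equiv-1$), the slow equations reduce to $\dot x=f^\pm(x,\al v)$, $\al\dot v=g^\pm(x,\al v)$ with $g^+\equiv-1$. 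Reading the endpoint $v$-values off the definitions of the $\Sigma_i$ (they are $0,-1,+1,0$ up to $\ord{\dd,\kk,\kk|\log\dd|}$), integrating $\al\dot v=g^\pm$, and replacing $g^\pm(x,\al v)$ by $g^\pm(x_0,0)+\ord\al$, the two $u=+1$ arcs take total time $2\al+\ord{\kk\al,\al^2}$ and the $u=-1$ arc takes time $\tfrac{2\al}{g^-(x_0,0)}+\ord{\kk\al,\al^2}$; since $f^+\equiv0$ the $x$-increment comes only from the $u=-1$ arc and equals $2\al\tfrac{f^-(x_0,0)}{g^-(x_0,0)}+\ord{\cdots}$. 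The corrections from the exponentially small $u$-transients are $\ord{\kk\al\dd}$ and are absorbed.

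On the six fast arcs the orbit spends negligible slow time. The two entry arcs $\Sigma_1\to\Sigma_2$, $\Sigma_5\to\Sigma_6$ (the orbit leaves the branch $u=\pm1$ at the fold $v+u=\mp\kk$, then $u$ crosses an $\ord\kk$-band) cost time $\ord{\al\dd,\kk^2\al}$ and an $\ord\kk$ displacement in $v$; the two relaxation arcs $\Sigma_2\to\Sigma_3\to\Sigma_4$, $\Sigma_6\to\Sigma_7\to\Sigma_8$ cost exactly the time $T_f=-\kk\al\log\tfrac{\dd}{2-\dd-2\kk}$ of \eqref{eq:T}, during which $v$ moves by $\ord{\kk|\log\dd|}$ and $x$ by $\ord{\kk\al|\log\dd|}$; all of this is $\ord{\kk\al,\tfrac\dd\kk\al,\kk\al|\log\dd|}$. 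The hypothesis $C\kk\,|\log\tfrac\dd2|\le\tfrac12$---equivalently the lower bound $\dd>2\mathrm{e}^{-1/(2\kk C)}$ in Theorem~\ref{thm:filalpha}---is used precisely here, to keep $T_f$ and the $v$-excursion of a fast arc below a fixed fraction of the band so that the orbit re-enters the slow part of $\bf{A}$ cleanly and the section order survives. Summing the nine contributions gives the formulas for $T^1$ and $x(T^1)$. Finally, Taylor-expanding the Filippov flow \eqref{eq:simplifiedF}, $x_F(T^1)=x_0+f_F(x_0)T^1+\ord{(T^1)^2}$ with $f_F(x_0)=\tfrac{f^-(x_0,0)}{1+g^-(x_0,0)}$ and $T^1=\tfrac{2\al(1+g^-(x_0,0))}{g^-(x_0,0)}+\ord{\cdots}$; the product telescopes to $x_0+\tfrac{2\al f^-(x_0,0)}{g^-(x_0,0)}+\ord{\cdots}$, which coincides with $x(T^1)$ up to the error terms. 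This is the third bullet.

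The main obstacle is the analysis of the two fast-layer passages. Because $\phi$ is only continuous at $|w|=1$, the point where the orbit leaves the branch $u=+1$ is a degenerate ``fold'', and one must show both that the dwell time there is genuinely $o(\al)$ and that the orbit exits the layer $\ord\dd$-close to $u=-1$, rather than being held near the vanished slow manifold by a delayed-escape effect---this is exactly what the lower bound on $\dd$ prevents, and it is the reason a loss $\kk\al|\log\dd|$ (rather than $\ord{\al^2}$) is unavoidable in the final estimate. A secondary, purely bookkeeping difficulty is that the nine error contributions---each a composition of a transient and a slow drift---must be shown to combine into the single bound $\ord{\kk\al,\tfrac\dd\kk\al,\kk\al|\log\dd|,\al^2}$, in particular that no cross-term between the $\ord\dd$-sized $u$-transient and the $\ord\al$-sized slow drift forces a worse power of $\al$.
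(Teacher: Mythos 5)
Your proposal is correct and follows essentially the same route as the paper: the same sections $\Sigma_0,\dots,\Sigma_8$, the same arc-by-arc time and $x$-increment estimates (the $2\al$ from the $u=+1$ arcs after the flow-box normalisation $g^+\equiv-1$, the $\tfrac{2\al}{g^-(x_0,0)}$ from the $u=-1$ arc, the explicit relaxation time $T_f$ of \eqref{eq:T}), and the same final Taylor comparison with the Filippov flow. The only differences are bookkeeping ones well inside the stated error: the paper obtains the $\tfrac{\dd}{\kappa}\al$ term from the crude bound $|G'|\le L(1+\tfrac{\dd}{\kappa\al})$ on the slow arcs rather than from the fast layers, and it bounds the entry arcs $\Sigma_1\to\Sigma_2$, $\Sigma_5\to\Sigma_6$ by $\OO\bb{(\kappa+\dd)\al}$ instead of your sharper (and unneeded) $\OO(\al\dd,\kappa^2\al)$.
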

After this proposition, and using the same reasoning as in Lemma \ref{lem:numbercycles},
one can see that the number of iterates of the Poincar\'{e} map needed to arrive to $x_F(T)$ is of order
$\OO(\frac{1}{\al})$ and then one obtains the results in Theorem \ref{thm:filalpha}.

We devote the rest of the section to prove the proposition.
Let's call $R_{ij}$ the region in $\bf{A}$ between sections $\Sigma_i$ and $\Sigma_j$.

An important observation is that along $R_{01}\cup R_{12}\cup R_{78}\cup R_{80}$ the function $g$ satisfies \eqref{fitesfgsigne-}.
Analogously, in $R_{34}\cup R_{45}\cup R_{56}$, $g$ satisfies \eqref{fitesfgsigne+}.
Before we proceed with quantitative estimates of this map and of the time needed for the orbit of a point in $\Sigma _0$ to return to it,
we apply the same simplifications to the vector field $X^+$ that we made in the proof of Proposition \ref{prop:error} (Appendix \ref{sec:hproof}).

We can always assume that, after a regular change of variables the vector field
$X^+(x,\al v)=(f^+(x,\al v), g^+(x,\al v)) = (f(x,\al v,1),g(x,\al v,1))$ can be written as
\begin{equation}\label{eq:simplified+}
f(x,\al v,1)=0, \ g(x,\al v,1)=-1
\end{equation}
Therefore, the Filipov vector field will be given by \eqref{eq:simplifiedF}.


In the sequel we will denote by $T^{ij}$ the time needed for a solution to go from $\Sigma _i$ to $\Sigma_j$.
Therefore $T^{ij}=T^{ik}+T^{kj}$, for any $i\le k\le j$.
Next Lemma gives a first estimation of the time spent in a step of the Poincar\'{e} map.

\begin{lemma}\label{lemma:aprioribounds}
Take $\alpha_0$ the constant given in Proposition \ref{blockinv}, $0<\kappa <\frac{1}{4}$ and $0<\dd \le \delta_1 <\frac{1}{4}$,
such that $C\kappa |\log \frac{\dd}{2}|\le \frac{1}{2}$.
There exist constants $L_1$, $L_2$, such that for $0<\alpha\le \al_0$ and for any
$z_0=(x_0,0,u_0)\in \Sigma_0\cap \bf{A}$, if we call $T^1=T^1(z_0)$
the first time such that $\varphi(T^1;z_0)\in \Sigma_0$, one has:
$$
L_1 \al \le T^1\le L_2 \al.
$$
\end{lemma}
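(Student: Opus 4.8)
The plan is to follow a trajectory starting at $z_0=(x_0,0,u_0)\in\Sigma_0$ all the way around one full loop back to $\Sigma_0$, and bound the time spent in each of the regions $R_{01},R_{12},\dots,R_{80}$ either above or below by an explicit multiple of $\al$. The total time $T^1$ is the sum $T^{01}+T^{12}+\cdots+T^{80}$, so it suffices to produce $O(\al)$ upper bounds and an $\Omega(\al)$ lower bound on the sum; the individual pieces fall into two types. First, in the ``fast relaxation'' regions where $u$ is governed by $\kappa\al\dot u=\pm1-u$ (namely $R_{23}$, where the trajectory travels from $u=1-\dd-2\kappa$ down to $u=-1+\dd$, and $R_{67}$ symmetrically), the $u$-equation decouples and the transition time is exactly $T_f=-\kappa\al\log\frac{\dd}{2-\dd-2\kappa}$ as computed in \eqref{eq:T}; by the hypothesis $C\kappa|\log\frac{\dd}{2}|\le\frac12$ this is bounded by $L\kappa\al$, hence certainly $O(\al)$, and it is nonnegative. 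Second, in the ``slow drift'' regions ($R_{01},R_{12},R_{34},R_{45},R_{56},R_{78},R_{80}$) the variable $v$ changes by an $O(1)$ amount while $\al\dot v=g(x,\al v,u)$, and on each such region $g$ has a definite sign and is bounded away from $0$ and from $\infty$ by the uniform bounds \eqref{fitesfgsigne+}--\eqref{fitesfgsigne-} and \eqref{fitesfgK}: explicitly $D\le|g|\le C$ there. Integrating $\al\,dv = g\,dt$ over a region where $v$ sweeps a length $\ell_i$ (an explicit constant determined by the definitions of the $\Sigma_i$, e.g. $\ell$ of order $1$ for $R_{34}$ and of order $\kappa$ or $\dd$ for the thin transversals) gives $\frac{\al\ell_i}{C}\le T^{i,i+1}\le\frac{\al\ell_i}{D}$.

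Concretely I would proceed as follows. First, invoke Proposition \ref{annulusinv}: since $z_0\in\Sigma_0\subset\mathbf{A}$, the orbit stays in $\mathbf{A}$ until it leaves through $x=\pm M$, and as long as $|x(t)|<M$ all the bounds \eqref{fitesfgK}, \eqref{fitesfgsigne+}, \eqref{fitesfgsigne-} are available along it; Lemma \ref{lem:cotainftemps} guarantees it cannot escape through $x=\pm M$ before a time of order $1$, which is $\gg L_2\al$ once $\al_0$ is small, so the orbit really does complete the loop inside $\mathbf{A}$. Second, handle the two fast legs $R_{23}$, $R_{67}$ using the exact formula \eqref{eq:T}, noting that there $v$ moves monotonically (since $g$ keeps its sign) so these legs are genuinely traversed once. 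Third, handle the seven slow legs by the integration argument above: the key point is that on $R_{01}\cup R_{12}\cup R_{78}\cup R_{80}$ one has $u$ close to $+1$ so $g\le -D<0$ by \eqref{fitesfgsigne-}, while on $R_{34}\cup R_{45}\cup R_{56}$ one has $u$ close to $-1$ so $g\ge D>0$ by \eqref{fitesfgsigne+}; in every case $v$ crosses the strip monotonically, so no cancellation occurs and $T^{i,i+1}=\al\int \frac{dv}{|g|}$ is squeezed between $\frac{\al\ell_i}{C}$ and $\frac{\al\ell_i}{D}$. Summing the legs, the upper bound is $T^1\le \al\big(\sum_i \ell_i/D + 2L\kappa\big)=:L_2\al$ and the lower bound is $T^1\ge \al \sum_i \ell_i/C=:L_1\al$ (dropping the nonnegative fast legs), with $L_1,L_2$ depending only on the vector field bounds $C,D$ in the compact set, as required. (Here one uses $\eps=\kappa\al$ and $0<\kappa<\frac14$, $0<\dd\le\delta_1$ so that the geometric lengths $\ell_i$ are bounded uniformly.)

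The main obstacle, and the only genuinely non-routine point, is ensuring that each leg is traversed \emph{exactly once} with $v$ (or $u$) monotone, so that the naive integral bound $T=\al\int dv/|g|$ is valid and there is no risk of the trajectory lingering or doubling back and thereby inflating the time beyond $L_2\al$ — or worse, never reaching the next section. This is exactly where the sign conditions \eqref{fitesfgsigne+}, \eqref{fitesfgsigne-} do the work: on each slow region $g$ has a strict sign, forcing $\dot v$ to have a strict sign, so $v$ is strictly monotone and the orbit must cross the bounding section $\Sigma_{i+1}$ in the predicted time; and on each fast region the decoupled linear equation $\kappa\al\dot u=\pm1-u$ makes $u$ strictly monotone toward $\mp1$, so the trajectory exits at $u=\mp(1-\dd)$ precisely at time $T_f$. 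Verifying that the transversals $\Sigma_0,\dots,\Sigma_8$ are arranged so that the orbit visits them in cyclic order — which follows by combining the inward-pointing estimates already established in Propositions \ref{blockinv} and \ref{annulusinv} with these monotonicities — is the one place a little care is needed; the rest is the elementary integration bookkeeping sketched above.
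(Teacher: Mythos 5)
Your proposal is correct and follows essentially the same route as the paper: it bounds the slow legs by integrating $|\alpha|\dot v=g$ with the uniform bounds $D\le|g|\le C$ from \eqref{fitesfgsigne+}--\eqref{fitesfgsigne-} (the paper merely groups them into the two blocks $R_{01}\cup R_{12}\cup R_{78}\cup R_{80}$ and $R_{34}\cup R_{45}\cup R_{56}$ rather than leg by leg, and gets the lower bound from the crossing of $R_{80}\cup R_{01}$ alone), and handles $R_{23}$, $R_{67}$ with the exact relaxation time \eqref{eq:T} controlled by the hypothesis $C\kappa|\log\frac{\dd}{2}|\le\frac12$. The monotonicity/ordering point you flag is indeed the only delicate step, and it is resolved exactly as you say, by the sign conditions together with the invariant annulus of Propositions \ref{blockinv} and \ref{annulusinv}.
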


For the time $T^+$ spent in the regions $R_{01}\cup R_{12}\cup R_{78}\cup R_{80}$, we use that the maximum variation of
$v$ is between $-(1+\dd+\kappa+K\kappa)$ and $(1+\dd+\kappa+K\kappa)$. Therefore, using that
$
v(T^+)-v_0=\frac{1}{\al}\int _{0}^{T^+}g(x(t), \al v(t),u(t))dt
$
and  the bounds  \eqref{fitesfgsigne-} for $g$ in these regions, one obtain
\begin{eqnarray*}
T^+ &\le& \frac{|v(T^+)-v_0|}{D}\al\le \frac{2(1+\dd+\kappa+K\kappa )}{D}\al\\
&\le& \frac{2(2+K)}{D}\al=\frac{2(3+2C)}{D}\al.
\end{eqnarray*}
The time $T^+$ is bigger that the time $\bar T^+$ spend to cross the region $R_{80}\cup R_{01}$, which is given by
$$
2 -2\dd-2\kappa +C\kappa \log {\frac{\dd}{2-2\kappa-\dd}} = -\frac{1}{\al}\int _{0}^{\bar T^+}g(x(t), y(t),u(t))dt <\frac{C}{\al} \bar T^+,
$$
where we have used that $g$ satisfies \eqref{fitesfgsigne-}. Therefore, using the conditions for $\kappa$ and $\dd$, one has
\begin{eqnarray*}
T^+&\ge& \bar T^+ \ge
 \frac{2 -2\dd-2\kappa+C\kappa \log {\frac{\dd}{2-\dd-2\kappa}} }{C}\al \\
 &\ge& \frac{2-2(\kappa+\dd)-\frac{1}{2}}{C}\al \ge \frac{1}{2C} \al.
\end{eqnarray*}
Similar results give analogous bounds for the time $T^-$ spent in the regions $R_{34}\cup R_{45}\cup R_{56}$.

Finally, the times $T^{23}$ and $T^{67}$ spent to cross
$R_{23}$ and $R_{67}$ are given by \eqref{eq:T} using that in $R_{23}$, $\phi=-1$, and in $R_{67}$, $\phi=1$:
\begin{equation}\label{eq:t23}
T^{23}=T^{67}=-\kappa \al\log \frac{\dd}{2-\dd-2\kappa},
\end{equation}
which gives the upper bound
$$
0<T^{23} =T^{67} \le \eps |\log{\frac{\dd}{2}}|=  \kappa |\log{\frac{\dd}{2}}| \al\le \frac{1}{2C} \al.
$$
Now, we can obtain the upper bounds for $T^1$:
$$
T^1 \le T^++T^-+T^{23} +T^{67}\le (\frac{4(3+2C)}{D}+\frac{1}{C})\al= L_1 \al
$$
And also a lower bound:
$$
T^1\ge \bar T^+ +\bar T^-+T^{23} +T^{67}\ge  (\frac{1}{C}+ 2\kappa |\log{\frac{\dd}{2}}|) \al \ge  \frac{1}{C} \al = L_2 \alpha
$$
which provide the desired bounds.
\qed

\begin{lemma}\label{lem:shorttimes}
With the same hypotheses of Lemma \ref{lemma:aprioribounds},
there exists a constant $L_3$, such that all the times $T^{12}$, $T^{34}$, $T^{56}$, $T^{78}$ satisfy:
$$
|T^{ij}| \le L_3 (\kappa+\dd) \al, \ i,j=1,2 \ \mbox{or } \ i,j=5,6
$$
and
$$
|T^{ij}| \le L_3 (\kappa+\dd-C\kappa \log{\dd/2}) \al, \ i,j=3,4 \ \mbox{or } \ i,j=7,8
$$

\end{lemma}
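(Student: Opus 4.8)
The plan is to estimate each of the transition times $T^{12}$, $T^{34}$, $T^{56}$, $T^{78}$ directly by integrating the $v$-equation over the relevant small piece of the annulus, exactly as was done for the a priori bounds in Lemma~\ref{lemma:aprioribounds}. First I would observe that each of these regions is a ``thin'' slab in which $v$ moves only a distance of order $\kappa+\dd$ (for $R_{12}$, $R_{56}$) or $\kappa+\dd-C\kappa\log(\dd/2)$ (for $R_{34}$, $R_{78}$): indeed, $\Sigma_1$ is at $v=-1+\dd+\kappa$ and $\Sigma_2$ is at $u=1-\dd-2\kappa$, and since on $R_{12}$ we are above the plane $v+u=\kappa$ the $u$-equation forces $u$ to stay $\OO(\kappa\al)$-exponentially close to $1$, hence $v$ at $\Sigma_2$ differs from $-1+\dd+\kappa$ by at most $\OO(\kappa+\dd)$; similarly $\Sigma_3$ is at $u=-1+\dd$ while $\Sigma_4$ is at $v=-1+\dd+\kappa-C\kappa\log\frac{\dd}{2-\dd-2\kappa}$, so the $v$-excursion on $R_{34}$ is $\OO(\kappa+\dd-C\kappa\log(\dd/2))$. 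The same computation for $R_{56}$ and $R_{78}$ by symmetry.

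Next, on each of these regions $g$ has a definite sign bounded away from zero: by the remark preceding the lemma, on $R_{12}$ and $R_{78}$ the bound \eqref{fitesfgsigne-} gives $-C\le g\le -D<0$, and on $R_{34}$, $R_{56}$ the bound \eqref{fitesfgsigne+} gives $0<D\le g\le C$. Writing
$$
v(T^{ij})-v(0)=\frac{1}{|\al|}\int_0^{T^{ij}}g\bb{x(t),\al v(t),u(t)}\,dt\;,
$$
and using $D\le|g|\le C$ on the region, we get $|T^{ij}|\le \frac{|\al|}{D}\,\abs{v(T^{ij})-v(0)}$, and the bracketed quantity is exactly the $v$-excursion estimated in the previous paragraph. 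This yields $|T^{12}|,|T^{56}|\le L_3(\kappa+\dd)\al$ and $|T^{34}|,|T^{78}|\le L_3(\kappa+\dd-C\kappa\log(\dd/2))\al$ with $L_3$ depending only on $D$ and the constant $C$ of \eqref{fitesfgK}, as claimed.

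The one point requiring a little care — and the main (minor) obstacle — is the precise control of the $v$-value on the section defined by a $u$-level rather than a $v$-level (namely $\Sigma_2$, $\Sigma_3$, $\Sigma_6$, $\Sigma_7$): I must check that when the trajectory crosses such a section, its $v$-coordinate has drifted only by $\OO(\kappa+\dd)$ from the neighbouring $v$-level section. This follows because in each slab the $u$-equation $\kappa\al\dot u=\phi(\cdot)-u$ is, up to the exponentially small correction coming from the initial data, a relaxation toward $\pm1$, so the time to change $u$ by an amount $\OO(\kappa+\dd)$ is $\OO(\kappa\al\,|\log(\dd/2)|)$ at worst (this is exactly the time $T_f$ of \eqref{eq:T}), during which $v$ changes by at most $\frac{C}{\al}\cdot\OO(\kappa\al|\log(\dd/2)|)=\OO(C\kappa|\log(\dd/2)|)$; for $R_{12}$, $R_{56}$ the relevant $u$-change is only $\OO(\kappa)$ so no logarithm appears, while for $R_{34}$, $R_{78}$ it is genuinely $\OO(1)$ and the logarithmic term survives, matching the two different bounds in the statement. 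Assembling these pieces gives the lemma.
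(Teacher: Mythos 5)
Your argument is correct, and its engine is the same as the paper's: on each of the four regions $g$ is sign-definite and satisfies $D\le |g|\le C$ by \eqref{fitesfgsigne+}--\eqref{fitesfgsigne-}, so integrating $\al\dot v=g$ reduces the time estimate to a bound on the $v$-excursion divided by $D$. Where you genuinely diverge is in how that excursion is bounded. The paper reads it off statically from the positively invariant annulus $\mathbf{A}$ of Propositions \ref{blockinv} and \ref{annulusinv}: the orbit cannot leave $\mathbf{A}$, and between the relevant sections the $v$-extent of $\mathbf{A}$ is explicitly $(K+2)\kappa+2\dd$ for $R_{12},R_{56}$ and $2\dd+(K+2)\kappa-C\kappa\log\frac{\dd}{2-\dd-2\kappa}$ for $R_{34},R_{78}$, with $K=2C+1$, so no further dynamics is needed. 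You instead re-derive the excursion dynamically, via the relaxation $\kappa\al\dot u=\phi-u$ and the drift of $v$ over the crossing time $T_f$ of \eqref{eq:T}; this works, but it largely re-proves estimates already built into the construction of $\mathbf{A}$ and into the placement of $\Sigma_4,\Sigma_8$, and two statements need tightening: (i) on $R_{12}$ the orbit is not ``above the plane $v+u=\kappa$'' --- $u$ cannot decrease to $1-\dd-2\kappa$ while $\phi=+1$, so $\Sigma_2$ is reached only after the orbit enters and passes below the strip $|v+u|\le\kappa$, and the additional $v$-drop there must be controlled by the relaxation estimate (your third paragraph in effect supplies this); (ii) the logarithm in the $T^{34},T^{78}$ bounds is not produced by any $u$-relaxation inside $R_{34}$ itself (there $u$ stays $\dd$-close to $-1$); it enters through the entry value of $v$ at $\Sigma_3$, i.e.\ the drift accumulated during $R_{23}$ over the time $T_f$, equivalently through the shifted $v$-level defining $\Sigma_4$, so the phrase attributing it to the $u$-change ``for $R_{34},R_{78}$'' should be restated in that form. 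A minor point in your favour: you correctly divide by the lower bound $D$ throughout, whereas the paper's displayed estimate for $T^{12}$ divides by $C$, an apparent slip.
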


Let's consider the time $T^{12}$ spent from $\Sigma^1$ to $\Sigma^2$.
In this region $g$ is negative and satisfies bounds \eqref{fitesfgsigne-}.
Moreover, the maximal variation of $v$ smaller than $- (1-\dd)+\kappa- (-(1+\dd)-\kappa -K\kappa)= (K+2)\kappa +2 \dd$, therefore, as $K=2C+1$:
$$
|T^{12}| \le \frac{(K+2)\kappa +2\dd}{C}\al= \frac{(2C+3)\kappa +2\dd}{C}\al \le L_3 (\kappa+\dd) \al
$$
and similar bounds apply to $T^{56}$.

For the time $T^{34}$ we use that the maximal variation of $v$ is smaller than $2 \dd +2\kappa+\kappa K-C\kappa \log{\dd/2}$.
Then using that $K= 2C +1$, and that in this region $g$ is positive and satisfies bounds \eqref{fitesfgsigne+} we obtain
$$
|T^{34}| \le \frac{2 \dd +2\kappa+\kappa K -C\kappa \log{\dd/2}}{D}\al \le L_3 \left( \kappa+\dd - C\kappa \log{\dd/2}\right) \al
$$
and similar bounds apply to $T^{78}$.

Next step is to compute the asymptotics of $T^1$.
From now on, to avoid a cumbersome notation we will use the symbol $\OO(\sigma)$, for $\sigma = \al, \dd,\kappa$ etc
to refer to a function which is bounded by a constant times $\sigma$ for $\sigma \to 0$.


\begin{lemma}\label{lem:first}
With the same hypotheses of Lemma \ref{lemma:aprioribounds},
take $z_0=(x_0,0,u_0) \in \Sigma_0$ and the flow $\varphi(t;z_0)$.
Then there exists $0<\sigma^*\le \al_0$ and a constant $L_4>0$, such that for
$|\al| \le \sigma ^*$, $0<\dd \le \sigma ^*$, $0<\frac{\dd}{\kappa} \le \sigma ^*$, the time $T^{01}$ such that $\varphi(T^{01};z_0)\in \Sigma ^{1}$ satisfies:
$$
|T^{01}- \al| \le L_4 (\kappa+\frac{\dd}{\kappa}+\al)\al
$$
Moreover
\begin{eqnarray}
x(T^{01})&=&x_0 +\OO(\frac{\dd}{\kappa}\al,\al ^2)\\
v(T^{01})&=& -1 + \kappa+ \dd\\
u(T^{01})&=& 1+ \OO(\dd)
\end{eqnarray}
\end{lemma}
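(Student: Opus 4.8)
The plan is to integrate the three equations of \eqref{bslow} (with $\al>0$) directly across the region $R_{01}$, exploiting the normalisation \eqref{eq:simplified+}, by which $f(x,\al v,1)\equiv 0$ and $g(x,\al v,1)\equiv -1$, together with the fact that the fast variable $u$ relaxes exponentially to $+1$ on the time scale $\kappa\al$, which is much shorter than the transit time, itself of order $\al$. The first step is to show that $\phi\bigl((u+v)/\kappa\bigr)\equiv 1$ along the orbit for as long as it remains in $R_{01}$, by a bootstrap argument. On $\Sigma_0$ we have $v=0$ and $|u_0-1|\le\dd$, hence $u_0+v_0=u_0\ge 1-\dd>\kappa$, since $\dd,\kappa<\tfrac14$. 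Let $\tau$ be the supremum of the times for which $u+v>\kappa$ on $[0,\tau)$; there the $u$-equation reduces to $\kappa\al\,\dot u = 1-u$, so $u(t)=1+(u_0-1)\mathrm e^{-t/(\kappa\al)}$ and thus $|u(t)-1|\le\dd\,\mathrm e^{-t/(\kappa\al)}\le\dd$, while \eqref{fitesfgsigne-} gives $g(x,\al v,u)<-D<0$, so $v$ is strictly decreasing. As long as $v>-1+\dd+\kappa$ we then have $u+v>(1-\dd)+(-1+\dd+\kappa)=\kappa$, and a bootstrap argument forces $\tau\ge T^{01}$, where $T^{01}$ denotes the first time at which $v=-1+\dd+\kappa$. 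Because $|\dot x|\le C$ and $T^{01}=\OO(\al)$ (a crude bound from $\dot v<-D/\al$ and $|v|<1$ along the way), Lemma \ref{lem:cotainftemps} rules out escape of the orbit from $\mathbf A$ through $x=\pm M$ during $[0,T^{01}]$, so the orbit really stays in $R_{01}$ up to $T^{01}$ and reaches $\Sigma_1$ there. In particular $v(T^{01})=-1+\dd+\kappa$ and $u(T^{01})=1+(u_0-1)\mathrm e^{-T^{01}/(\kappa\al)}=1+\OO(\dd)$.

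It then remains to estimate $T^{01}$ and $x(T^{01})$. Writing $g(x,\al v,u)=g(x,\al v,1)+\partial_u g(x,\al v,\xi)(u-1)=-1+\OO(|u-1|)$ by the mean value theorem and the boundedness of $\partial_u g$ on $\mathbf A$, and integrating $\al\dot v = g$ from $0$ to $T^{01}$, one obtains
$$
-1+\dd+\kappa = v(T^{01}) = -\frac{T^{01}}{\al}+\frac1\al\int_0^{T^{01}}\OO\bigl(|u(s)-1|\bigr)\,ds ,
$$
and since $|u(s)-1|\le\dd\,\mathrm e^{-s/(\kappa\al)}$ the integral term is $\OO(\dd\kappa)$, whence $T^{01}=\al(1-\dd-\kappa)+\OO(\dd\kappa\,\al)$ and therefore $|T^{01}-\al|\le L(\kappa+\dd)\al\le L_4\bigl(\kappa+\tfrac{\dd}{\kappa}+\al\bigr)\al$. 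For the slow variable, the normalisation \eqref{eq:simplified+} gives $\dot x=f(x,\al v,u)=f(x,\al v,1)+\OO(|u-1|)=\OO(|u-1|)$, so $|x(T^{01})-x_0|\le L\int_0^{T^{01}}|u(s)-1|\,ds\le L\dd\kappa\,\al$, comfortably within the announced $\OO(\tfrac{\dd}{\kappa}\al,\al^2)$.

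The one genuinely delicate point is the confinement step: one must be sure that $R_{01}$ is never left through a face other than $\Sigma_1$, and the decisive facts are that $v$ is strictly monotone there, that $u$ stays within $\dd$ of $+1$ precisely because it relaxes on the much faster scale $\kappa\al$, and that $x$ barely moves over a time interval of length $\OO(\al)$. Everything else is a short computation; the somewhat lossy final form of the error terms (replacing $\dd\kappa$ by $\dd/\kappa$, and adjoining the harmless $\al^2$ term) is imposed only so that this one-step estimate matches the common form in which all the transit estimates are summed over the $\OO(1/\al)$ cycles of the Poincar\'e map in Proposition \ref{prop:filalpha}.
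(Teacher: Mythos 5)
Your argument is correct and in fact slightly sharpens the stated estimates, but the key technical step differs from the paper's. The paper's proof shares your outer structure: it integrates $\al\dot v=g$ over $R_{01}$ to extract $T^{01}$ and then integrates $\dot x=f$ using the normalisation $f(x,\al v,1)=0$, $g(x,\al v,1)=-1$ of \eqref{eq:simplified+}. However, it controls the deviation of $f,g$ from their values at $u=1$ by a mean-value/Taylor expansion of $G(t)=g(x(t),\al v(t),u(t))$ about $t=0$, with the crude bound $|G'(t^\ast)|\le L\bb{1+\frac{\dd}{\kappa\al}}$ (the $\frac{\dd}{\kappa\al}$ coming from $|\dot u|$), which, combined with the a priori bound $T^{01}=\OO(\al)$ of Lemma \ref{lemma:aprioribounds}, yields the error terms $\OO(\al\kappa,\al\frac{\dd}{\kappa},\al^2)$; the confinement $|u-1|\le\dd$ in $R_{01}$ is there guaranteed by the invariant annulus of Propositions \ref{blockinv} and \ref{annulusinv} rather than by a bootstrap. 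You instead solve the $u$-equation explicitly in the region where $\phi\equiv1$, obtaining $|u(t)-1|\le\dd\,\mathrm{e}^{-t/(\kappa\al)}$, and feed this exponential decay into the integrals for $v$ and $x$; this replaces the paper's $\OO(\al\frac{\dd}{\kappa},\al^2)$ losses by $\OO(\dd\kappa\al)$, so your conclusion implies the lemma with room to spare, and your bootstrap ($u\ge 1-\dd$ and $v>-1+\dd+\kappa$ force $u+v>\kappa$, so $\phi=1$ persists until $\Sigma_1$ is reached) is a clean self-contained substitute for the annulus argument at this one step. Two cosmetic caveats: state explicitly that $x_0$ is a fixed distance from $\pm M$ (as in Lemma \ref{lem:cotainftemps}) so the orbit cannot exit through $x=\pm M$ within the time $\OO(\al)$, and note that at the endpoint one may have $u+v=\kappa$ exactly, where smoothness of $\phi$ in \eqref{phi} still gives $\phi=1$; neither affects the estimates.
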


Calling $G(t)=g(x(t),\al v(t),u(t))$, we have:
\begin{eqnarray*}
(- (1-\dd)+\kappa)\al &=& \int _{0}^{T^{01}} G(t)dt = T^{01}(g(x_0,0,u_0)
+\int _{0}^{T^{01}}t G'(t^* )
dt
\end{eqnarray*}
where $0\le t^*\le t$.
We use now that $g(x_0,0,u_0)=g(x_0,0,1)+ \OO(\dd)=-1+\OO(\dd)$ and that there exists a constant $L>0$ only depending
of the bounds of the functions $f$, $g$ and their derivatives in the compact $\KK$ such that
$$
|G'(t^*)|=|\frac{d}{dt}(g(x(t),\al v(t),u(t))) (t=t^*)|\le L (1+ \frac{\dd}{\kappa \al}),
$$
obtaining
$$
(- (1-\dd)+\kappa)\al=
T^{01}(-1 +\OO(\dd))+ (T^{01})^2 \OO(1+ \frac{\dd}{\kappa \al})
$$
Now, using the bounds of lemma \ref{lemma:aprioribounds} one has
$$
(- (1-\dd)+\kappa)\al=
T^{01}(-1 +\OO(\dd)+O(\al) + \OO(\frac{\dd}{\kappa}))
$$
which gives taking $|\al| \le \sigma ^*$, $0<\dd \le \sigma ^*$, $0<\frac{\dd}{\kappa} \le \sigma ^*$ for some $0<\sigma ^*\le \al_0$ small enough:
$$
T^{01}= \al +\OO(\al \kappa,\al\frac{\dd}{\kappa}, \al ^2).
$$
Once we have the asymptotics of $T^{01}$ and using that $f(x_0,0,1)=0$, we have:
\begin{eqnarray*}
x(T^{01})&=& x_0+\int_{0}^{T^{01}}f(x(t),\al v(t),u(t))dt\\&=&
x_0+T^{01}(f(x_0,0,1)+ O(\dd) )+ (T^{01})^2 O(1+\frac{\dd}{\kappa \al})\\
&=&
x_0 + \OO(\al\frac{\dd}{\kappa},\al ^2)
\end{eqnarray*}
The values of $v(T^{01})$ and $u(T^{01})$ are given by the definition of the section $\Sigma_1$.
\qed

Next step is to compute the flow from $\Sigma_1$ to $\Sigma_2$.

\begin{lemma}\label{lem:intermediate}
With the same hypotheses of Lemma \ref{lem:first} we have:

In $\Sigma_2$:
\begin{eqnarray*}
x(T^{02})-x(T^{01})&=& \OO(\al \kappa, \al\dd)\\
v(T^{02})&=& -1+\OO( \kappa , \dd)\\
u(T^{02})&=& 1-\dd-2\kappa.
\end{eqnarray*}
In $\Sigma_3$:
\begin{eqnarray*}
x(T^{03})-x(T^{02})&=& \OO(\al \kappa\log \frac{\dd}{2})\\
v(T^{03})&=&-1+ \OO(\kappa, \dd , \kappa\log \frac{\dd}{2})\\
u(T^{03}) &=& -1+\dd.
\end{eqnarray*}
In $\Sigma_4$:
\begin{eqnarray*}
x(T^{04})-x(T^{03})&=&  \OO(\al \kappa, \al\dd, \al \kappa \log {\frac{\dd}{2}})\\
v(T^{04})&=& -1+ \dd + \kappa-C \kappa \log {\frac{\dd}{2-\dd-2\kappa}}\\
u(T^{04})&=& -1+\OO(\dd).
\end{eqnarray*}
In $\Sigma_6$:
\begin{eqnarray*}
x(T^{06})-x(T^{05})&=&  \OO(\al \kappa, \al\dd)\\
v(T^{06})&=& 1+O( \kappa, \dd )\\
u(T^{06})&=& -1+\dd+2\kappa.
\end{eqnarray*}
In $\Sigma_7$:
\begin{eqnarray*}
x(T^{07})-x(T^{06})&=&  \OO(\al \kappa, \al\dd, \al \kappa \log {\frac{\dd}{2}})\\
v(T^{07})&=& 1+\OO( \kappa, \dd )\\
u(T^{07})&=& 1-\dd.
\end{eqnarray*}
In $\Sigma_8$:
\begin{eqnarray*}
x(T^{08})-x(T^{07})&=& \OO(\al \kappa, \al\dd, \al \kappa \log {\frac{\dd}{2}})\\
v(T^{08})&=& 1- \dd- \kappa+C \kappa \log {\frac{\dd}{2-\dd-2\kappa}}\\
u(T^{08})&=& 1+\OO(\dd)
\end{eqnarray*}
\end{lemma}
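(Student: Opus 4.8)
The plan is to carry the coordinate estimates of Lemma~\ref{lem:first} forward across the remaining sections $\Sigma_2,\dots,\Sigma_8$, one region at a time; there is no new mechanism, only bookkeeping, and the eight regions $R_{ij}$ fall into three types that are each handled by a tool already in hand. For the \emph{short transversal regions} $R_{12},R_{56}$ (resp.\ $R_{34},R_{78}$) Lemma~\ref{lem:shorttimes} bounds the crossing time by $L_3(\kappa+\dd)\al$ (resp.\ $L_3(\kappa+\dd-C\kappa\log\frac{\dd}{2})\al$); plugging this into $x(T^{0j})-x(T^{0i})=\int f\,dt$ with $|f|\le C$ from~\eqref{fitesfgK} gives the stated $x$-increments, and into $v(T^{0j})-v(T^{0i})=\al^{-1}\int g\,dt$ with $|g|\le C$ gives $v$-increments of the asserted size. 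At each arrival section the value of $u$ is the defining constant of $\Sigma_j$ (e.g.\ $u=-1+\dd$ on $\Sigma_3$, $u=-1+\OO(\dd)$ on $\Sigma_4$), and the value of $v$ is either the defining constant when $\Sigma_j$ is a fixed-$v$ section ($\Sigma_4,\Sigma_5,\Sigma_8$) or is $\mp1+\OO(\dots)$ obtained by adding the just-estimated increment to the value at the previous section.

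For the \emph{relaxation regions} $R_{23}$ and $R_{67}$ one has $\phi\equiv-1$, resp.\ $\phi\equiv+1$, so $u$ solves the scalar linear equation $\kappa\al\dot u=\mp1-u$ explicitly and the crossing time is exactly~\eqref{eq:t23}, $T^{23}=T^{67}=-\kappa\al\log\frac{\dd}{2-\dd-2\kappa}$; the $x$- and $v$-increments then follow from $|f|,|g|\le C$ and carry the factor $\kappa\log\frac{\dd}{2}$, while the exit value of $u$ is the defining constant of $\Sigma_3$, resp.\ $\Sigma_7$. For the \emph{long ``sliding'' region} $\Sigma_4\to\Sigma_5$, the $v$-coordinate runs from $\approx-1$ to $\approx+1$ with $g$ obeying~\eqref{fitesfgsigne+}, and one repeats the argument of Lemma~\ref{lem:first} verbatim on the branch $u\approx-1$: write $G(t)=g(x(t),\al v(t),u(t))=g(x_0,0,u_0)+tG'(t^\ast)$, use $g(x_0,0,u_0)=g(x_0,0,-1)+\OO(\dd)$, the bound $|G'(t^\ast)|\le L(1+\tfrac{\dd}{\kappa\al})$, and the a~priori estimate $T^{45}\le L_2\al$ of Lemma~\ref{lemma:aprioribounds}, to get $T^{45}=\tfrac{2\al}{g(x_0,0,-1)}+\OO(\dots)$. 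Since here $f(x_0,0,-1)=f^-(x_0,0)$ need not vanish, the $x$-increment across $R_{45}$ is $\tfrac{f(x_0,0,-1)}{g(x_0,0,-1)}2\al+\OO(\dots)$; this is the only $\OO(\al)$ contribution and it is exactly the drift $2\al f(x_0,0,-1)/g(x_0,0,-1)$ of Proposition~\ref{prop:filalpha}. The passage $\Sigma_5\to\Sigma_6\to\Sigma_7\to\Sigma_8\to\Sigma_0$ is treated the same way on the branch $u\approx+1$, where $R_{80}\cup R_{01}$ is the matching long region, crossed with $f(x,0,1)\equiv0$ and $g(x,0,1)\equiv-1$ by the normalization~\eqref{eq:simplified+}, so that it contributes time $\approx2\al$ but no $\OO(\al)$ drift; summing all eight increments telescopes to the statement.

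The main obstacle is uniformity of the error terms near the relaxation regions: the factor $\dd/(\kappa\al)$ in $|G'|\le L(1+\dd/(\kappa\al))$ is large, so one must check it is always multiplied by an interval short enough — of length $\OO(\al)$ on the long regions and $\OO((\kappa+\dd)\al)$ on $R_{12},R_{34},\dots$ — that products such as $(T^{ij})^2\,\dd/(\kappa\al)$ collapse to $\OO(\tfrac{\dd}{\kappa}\al)$ rather than to something larger, and similarly that the $\kappa\log\frac{\dd}{2}$ terms stay controlled. This, together with the standing assumption $C\kappa|\log\frac{\dd}{2}|\le\frac12$ that keeps $\Sigma_3,\Sigma_4,\Sigma_7,\Sigma_8$ inside the annulus, is precisely why the parameter restrictions $|\al|\le\sigma^\ast$, $0<\dd\le\sigma^\ast$, $0<\frac{\dd}{\kappa}\le\sigma^\ast$ must be inherited from Lemma~\ref{lem:first}.
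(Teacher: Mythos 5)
Your argument is correct and essentially the paper's own: for each region you combine the crossing-time bounds of Lemma~\ref{lem:shorttimes} and the explicit relaxation time \eqref{eq:t23} with the uniform bounds \eqref{fitesfgK} on $f$ and $g$ to get the $x$- and $v$-increments, and you read off $u$ (and, where applicable, $v$) at each arrival section from the section's definition or from membership in the corresponding region of the annulus, exactly as the paper does. The only remark is that your discussion of the long crossings $R_{45}$ and $R_{80}\cup R_{01}$ and the final summation is outside the scope of this lemma --- that material is the content of Lemma~\ref{lem:t45}, the lemma following it, and Proposition~\ref{prop:filalpha} --- though what you say there is consistent with the paper.
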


To obtain the bounds in $\Sigma_2$ we use that we already know by
Lemma \ref{lem:shorttimes} that $T^{12}=\OO(\al \kappa, \al\dd)$, therefore, using that
the function $f$ is bounded we have that
$x(T^{02})-x(T^{01})= \OO(T^{12})$ which gives the required bounds.
The bound for $v(T^{02})$ is just a consequence of the fact that the solution is in $R_{12}$ and therefore
$-1 + \dd + \kappa-K \kappa \le v \le -1 + \dd + \kappa$.
Finally, by definition of $\Sigma _2$ we get that $u(T^{02})=1-\dd-2\kappa$.

Once we are in $\Sigma _2$, as we know the time $T^{23}$ required by the solution to get to $\Sigma _3$ is given by \eqref{eq:t23}
an analogous reasoning gives the bounds in this region.
The bound for $v(T^{03})$ is just a consequence of the fact that the solution is in $R_{23}$ and therefore
$-1 + \dd + \kappa-K \kappa \le v \le -1+ \dd+ \kappa-C \kappa \log {\frac{\dd}{2-\dd-2\kappa}}$.
The value of $u(T^{03})$ is given by the definition of the section $\Sigma _3$.

To obtain the bounds in $\Sigma_4$ we use that we already know by
Lemma \ref{lem:shorttimes} that $T^{34}=\OO(\al \kappa, \al\dd,\al \kappa \log {\frac{\dd}{2}})$, therefore, using that
the function $f$ is bounded we have that
$x(T^{04})-x(T^{03})= \OO(T^{34})$ which gives the required bounds.
The value of $v(T^{04})$ is given by the definition of the section $\Sigma _4$.
Finally, by definition of $\Sigma _4$ we get that $u(T^{04})=1 +\OO(\dd)$.

The rest of bounds are analogous.

\qed

Next lemma gives the time and the value of the flow in the region $R_{45}$.

\begin{lemma}\label{lem:t45}
With the same hypotheses of Lemma \ref{lem:first} we have:
$$
T^{45}= \frac{2\al}{g(x_0,0,-1)}+\OO(\kappa \al , \frac{\dd} {\kappa}\al , \al ^2, \kappa \al \log {\frac{\dd}{2}}).
$$
and
\begin{eqnarray*}
&&x(T^{05})-x(T^{04})= 2\al\frac{f(x_0,0,-1)}{g(x_0,0,-1)}+
\OO(\kappa \al , \frac{\dd} {\kappa}\al , \al ^2, \kappa \al \log {\frac{\dd}{2}})\\
&&v(T^{05})= 1-\dd - \kappa \\
&&u(T^{05})= -1 +\OO(\dd) .
\end{eqnarray*}
\end{lemma}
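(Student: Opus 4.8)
The plan is to treat $R_{45}$ exactly as the ``long'' passage in which the genuine $\OO(\al)$-time motion occurs: here $u$ stays within $\dd$ of $-1$, so by \eqref{fitesfgsigne+} one has $0<D\le g\le C$, and the $(x,v)$ flow is, to leading order, the frozen field $(f,g)(x_0,0,-1)$. First I would record the structural facts. Since the $u$ component is near $-1$, \eqref{fitesfgsigne+} gives $\dot v=g/\al>0$, so $v$ increases monotonically from its value $v(T^{04})=-1+\dd+\kappa-C\kappa\log\frac{\dd}{2-\dd-2\kappa}$ on $\Sigma_4$ (known from Lemma \ref{lem:intermediate}) to $v=1-\dd-\kappa$ on $\Sigma_5$; the trajectory therefore reaches $\Sigma_5$, and the fixed distance $\Delta v:=v(T^{05})-v(T^{04})=2-2\dd-2\kappa+C\kappa\log\frac{\dd}{2-\dd-2\kappa}=2+\OO(\kappa,\dd,\kappa\log\frac{\dd}{2})$ together with the bounds on $g$ yields the a-priori estimate $\frac{\Delta v}{C}\al\le T^{45}\le\frac{\Delta v}{D}\al$, i.e.\ $T^{45}=\OO(\al)$, which will be fed back into the error terms.

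A point I would handle carefully is the bound $|u(t)+1|\le\dd$ throughout $R_{45}$. As long as $|u+1|\le\dd$ and $v\le 1-\dd-\kappa$ one has $v+u\le-\kappa$, hence $\phi\bb{\frac{v+u}{\kappa}}=-1$ and the third line of \eqref{bslow} becomes $\kappa\al\dot u=-1-u$, so $|u+1|$ is non-increasing; since $|u(T^{04})+1|\le\dd$, a continuation/bootstrap argument keeps $|u(t)+1|\le\dd$ on the whole slab. This in particular gives $u(T^{05})=-1+\OO(\dd)$ on $\Sigma_5$, while $v(T^{05})=1-\dd-\kappa$ is the defining equation of $\Sigma_5$.

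The asymptotics of $T^{45}$ I would obtain by integrating $\al\dot v=g$ over $[T^{04},T^{05}]$, writing $\al\,\Delta v=\int_{T^{04}}^{T^{05}} g\bb{x(t),\al v(t),u(t)}\,dt$, and Taylor-expanding $g$ about $(x_0,0,-1)$. The errors are controlled because $|x(t)-x_0|=\OO(\al)$ on $R_{45}$ --- this uses $x(T^{04})-x_0=\OO(\al)$ from Lemma \ref{lem:intermediate} together with $|\dot x|\le C$ and $T^{45}=\OO(\al)$ --- together with $|\al v(t)|=\OO(\al)$ and $|u(t)+1|\le\dd$; thus $g=g(x_0,0,-1)+\OO(\al,\dd)$ on the slab, so $\al\,\Delta v=g(x_0,0,-1)\,T^{45}+\OO(\al^2,\al\dd)$. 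Solving for $T^{45}$, using $g(x_0,0,-1)\ge D>0$, $\Delta v=2+\OO(\kappa,\dd,\kappa\log\frac{\dd}{2})$ and $\al\dd\le\frac{\dd}{\kappa}\al$, delivers $T^{45}=\frac{2\al}{g(x_0,0,-1)}+\OO(\kappa\al,\frac{\dd}{\kappa}\al,\al^2,\kappa\al\log\frac{\dd}{2})$. The $x$-displacement then follows by the same computation applied to $\dot x=f$: $x(T^{05})-x(T^{04})=f(x_0,0,-1)\,T^{45}+\OO(\al^2,\al\dd)$, into which I would substitute the formula just obtained for $T^{45}$ to get $\frac{2\al f(x_0,0,-1)}{g(x_0,0,-1)}+\OO(\kappa\al,\frac{\dd}{\kappa}\al,\al^2,\kappa\al\log\frac{\dd}{2})$.

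The analytic content is routine once the a-priori bounds are fixed; the only real obstacle is the bookkeeping --- checking that the errors inherited at $\Sigma_4$ from Lemmas \ref{lem:first} and \ref{lem:intermediate} (in particular the $\frac{\dd}{\kappa}\al$ and $\kappa\al\log\frac{\dd}{2}$ contributions to $x(T^{04})-x_0$, and the $\kappa\log\frac{\dd}{2}$ size of $\Delta v-2$) combine with the fresh $\OO(\al^2,\al\dd)$ errors generated inside $R_{45}$ to yield precisely the stated orders and nothing larger. The one slightly delicate \emph{structural} step --- as opposed to estimative --- is the bootstrap confining $u$ to within $\dd$ of $-1$, which rests on the sign of the fast $u$-equation below the switching plane $v+u=-\kappa$.
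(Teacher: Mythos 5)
Your proposal is correct and follows essentially the same route as the paper: integrate $\al\dot v=g$ and $\dot x=f$ across $R_{45}$, freeze the vector field at $(x_0,0,-1)$ using the fact that $u$ stays $\dd$-close to $-1$ and $x$ stays $\OO(\al)$-close to $x_0$, and solve for $T^{45}$ before substituting into the $x$-increment. The only differences are cosmetic: the paper bounds the integrand's variation via $G'(t^*)$ with $|G'|\le L(1+\frac{\dd}{\kappa\al})$ (yielding the $\frac{\dd}{\kappa}\al$ term), while you use a pointwise Lipschitz estimate giving $\OO(\al\dd)\le\OO(\frac{\dd}{\kappa}\al)$, and you re-derive the confinement $|u+1|\le\dd$ by a bootstrap on the relaxation equation, which the paper instead obtains from the positively invariant annulus.
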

By lemmas \ref{lem:first} and  \ref{lem:intermediate} we know that
\begin{eqnarray*}
x(T^{04})&=&x_0 + \OO(\kappa |\log\frac{\dd}{2}|\al , \frac{ \dd}{\kappa}\al, \kappa \al, \dd \al, \al ^2)=x_0+\OO(\al) \\
v(T^{04})&=& -1+ \dd + \kappa-C \kappa \log {\frac{\dd}{2-\dd-2\kappa}}\\
u(T^{04})&=& -1+\OO(\dd).
\end{eqnarray*}
We use the fundamental theorem of calculus, calling $G(t) = g(x(t),\al v(t),u(t))$
\begin{eqnarray*}
&&
(2 -2\dd -2\kappa  + C \kappa \log\frac{\dd}{2-\dd-2\kappa})\al = \int _{T^{04}}^{ T^{05}}G(t)dt\\
&&= T^{45} g(x_0 + \OO(\al ), \OO(\al ), -1+O(\dd))+
 \int _{T^{04}}^{ T^{05}} G'(t^*)(t-T^{04})dt.\\
&&= T^{45} (g(x_0 , 0 , -1) + O(\al, \dd))+
 \int _{T^{04}}^{ T^{05}}
 G'(t^*)(t-T^{04})dt.
\end{eqnarray*}
Now, using that $|G'(t^*)|\le L(1+\frac{\dd}{\kappa \al})$ and using the same procedure as in Lemma \ref{lem:first} we obtain
$$
T^{45}= \frac{2\al}{g(x_0,0,-1)}+\OO(\kappa \al , \frac{\dd} {\kappa}\al , \al ^2, \kappa \al \log {\frac{\dd}{2}}).
$$
Once we know the asymptotic for $T^{45}$ we obtain the value of $x$ by the fundamental theorem of calculus, calling $F(t) = f(x(t),\al v(t),u(t))$
\begin{eqnarray*}
&&x(T^{05})-x(T^{04})= \int _{T^{04}}^{ T^{05}}F(t)dt\\
&&= T^{45} f(x_0 + \OO(\al ), \OO(\al), -1+\OO(\dd))+
 \int _{T^{04}}^{ T^{05}}
 F'(t^*)(t-T^{04})dt.\\
&&= T^{45} (f(x_0 , 0 , -1) + \OO(\al , \dd))+
 \int _{T^{04}}^{ T^{05}}
 F'(t^*)(t-T^{04})dt.
\end{eqnarray*}
Now, using that $|F'(t^*)|\le L(1+\frac{\dd}{\kappa\al})$ and the asymptotics for $T^{45}$ we obtain the desired asymptotics for $x$.
The asymptotics of $y( T^{05})$ and $u( T^{05})$ are given by the definition of the section $\Sigma_5$.
\qed

To complete a turn around the annulus $\bf{A}$ which gives one iterate of the Poincar\'{e} map we need to compute the time from $\Sigma_8$ to $\Sigma_0$.
This is done in next lemma, whose proof is analogous to the previous one.
\begin{lemma}
With the same hypotheses of Lemma \ref{lem:first} we have:
\begin{eqnarray*}
&&T^{80}= \al+\OO(\kappa\al , \frac{ \dd} {\kappa}\al, \kappa \al \log{\frac{\dd}{2}},\al ^2)\\
&&x(T^{1})-x(T^{08}) = \OO(\al^2, \frac{\al \dd}{\kappa})\\
&&v(T^{1})=0\\
&&u(T^{1})= 1+\OO(\dd)
\end{eqnarray*}
\end{lemma}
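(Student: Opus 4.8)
The plan is to mimic, along the region $R_{80}$ where the solution descends from $\Sigma_8$ to $\Sigma_0$, the computation already carried out in Lemma~\ref{lem:t45}. First I would record the entry data on $\Sigma_8$: accumulating the increments of the preceding lemmas (Lemmas~\ref{lem:first}, \ref{lem:intermediate}, \ref{lem:t45}), the flow reaches $\Sigma_8$ with $x(T^{08})=x_0+\OO(\al)$, $v(T^{08})=1-\dd-\kappa+C\kappa\log\frac{\dd}{2-\dd-2\kappa}=1+\OO(\dd,\kappa,\kappa\log\frac{\dd}{2})$ and $u(T^{08})=1+\OO(\dd)$. Throughout $R_{80}$ the quantity $v+u$ stays close to $2$, so $\phi\big((v+u)/\kappa\big)=1$ and the $u$-equation reads $\kappa\al\dot u=1-u$; hence $u$ relaxes towards $1$, stays $1+\OO(\dd)$, and satisfies $|\dot u|\le\dd/(\kappa\al)$. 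Meanwhile $g$ satisfies the bounds \eqref{fitesfgsigne-}, so $\al\dot v=g<0$ and $v$ decreases monotonically down to $0$; this shows $\Sigma_0$ is reached, and by the a~priori estimates of Lemma~\ref{lemma:aprioribounds} we have $T^{80}\le T^1\le L_1\al$, i.e.\ $T^{80}=\OO(\al)$.

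To get the asymptotics of $T^{80}$ I would set $G(t)=g(x(t),\al v(t),u(t))$ and combine the fundamental theorem of calculus with a first-order Taylor expansion at $t=T^{08}$:
\[
-\,v(T^{08})\,\al=\int_{T^{08}}^{T^1}G(t)\,dt
=T^{80}\,G(T^{08})+\int_{T^{08}}^{T^1}G'(t^*)\,(t-T^{08})\,dt .
\]
By the normalisation \eqref{eq:simplified+}, $g(x,\al v,1)=-1$, so $G(T^{08})=-1+\OO(\dd)$; since $f,g$ are bounded and $|\dot u|\le\dd/(\kappa\al)$ on $R_{80}$, we have $|G'(t^*)|\le L(1+\dd/(\kappa\al))$, and with $T^{80}=\OO(\al)$ the integral remainder is $\OO\big((1+\dd/(\kappa\al))(T^{80})^2\big)=\OO(\al^2,\tfrac{\dd}{\kappa}\al)$. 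Substituting $v(T^{08})=1+\OO(\dd,\kappa,\kappa\log\frac{\dd}{2})$ and rearranging (using $T^{80}=\OO(\al)$ once more to absorb $T^{80}\,\OO(\dd)$ into $\OO(\al\dd)$) yields
\[
T^{80}=\al+\OO\Big(\kappa\al,\ \tfrac{\dd}{\kappa}\al,\ \kappa\al\log\tfrac{\dd}{2},\ \al^2\Big),
\]
by the same argument as in Lemma~\ref{lem:t45}.

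Finally, for the $x$-component I would repeat the Taylor argument with $F(t)=f(x(t),\al v(t),u(t))$:
\[
x(T^1)-x(T^{08})=T^{80}\,F(T^{08})+\int_{T^{08}}^{T^1}F'(t^*)\,(t-T^{08})\,dt .
\]
Since \eqref{eq:simplified+} gives $f(x,\al v,1)=0$, we have $F(T^{08})=\OO(\dd)$, while $|F'(t^*)|\le L(1+\dd/(\kappa\al))$, so $x(T^1)-x(T^{08})=\OO(\al\dd)+\OO(\al^2,\tfrac{\al\dd}{\kappa})=\OO(\al^2,\tfrac{\al\dd}{\kappa})$. The equalities $v(T^1)=0$ and $u(T^1)=1+\OO(\dd)$ are immediate from the definition of $\Sigma_0$. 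There is no genuine obstacle here; the only thing requiring care, as throughout this appendix, is the bookkeeping of error terms so that every contribution is absorbed into the four stated orders $\kappa\al$, $\tfrac{\dd}{\kappa}\al$, $\kappa\al\log\tfrac{\dd}{2}$, $\al^2$ (using $0<\kappa<\tfrac14$, which makes $\dd\al=\OO(\tfrac{\dd}{\kappa}\al)$). Once the $\Sigma_8$ entry data and the a~priori bound $T^{80}=\OO(\al)$ are available, the computation is routine.
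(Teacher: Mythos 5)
Your proposal is correct and is essentially the argument the paper intends: it states that the proof is ``analogous to the previous one'' (Lemma~\ref{lem:t45}), and you have simply carried out that same fundamental-theorem-of-calculus plus Taylor expansion with the normalisation \eqref{eq:simplified+} on the region $R_{80}$, with the right bookkeeping of the error terms. The only slight imprecision is the remark that $v+u$ ``stays close to $2$'' in $R_{80}$ (near $\Sigma_0$ it is close to $1$), but what you actually use --- that $v+u\ge 1-\dd>\kappa$ so $\phi\bb{\frac{v+u}{\kappa}}=1$ throughout --- is correct.
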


\noindent{\textbf{Proof of Proposition \ref{prop:filalpha}:}

Putting all the lemmas together gives that the solution beginning at $(x_0,0,u_0)\in \Sigma_0$ returns to $\Sigma_0$ after a time $T^1$ satisfying:
\begin{eqnarray*}
T^1 &=&T^{01}+T^{12}+T^{23}+T^{34}+T^{45}+T^{56}+T^{67}+T^{78}+T^{80} \\
&=& 2\al +\frac{2\al}{ g(x_0,0,-1)} + \OO(\kappa\al , \frac{ \dd} {\kappa}\al, \kappa \al \log{\frac{\dd}{2}},\al ^2),
\end{eqnarray*}
and:
$$
x(T^1)=x_0 + 2\al \frac{f(x_0,0,-1)}{g(x_0,0,-1)}+\OO(\kappa\al , \frac{ \dd} {\kappa}\al, \kappa \al \log{\frac{\dd}{2}},\al ^2)
$$
which is the value of the $x$ coordinate after one iteration of the Poincar\'{e} map.

If we consider the solution of the Filipov vector field \eqref{eq:simplifiedF} at time $T^1$,
as $T^1=\OO(\alpha)$ we can Taylor expand the solution, and we obtain, using the asymptotics for $T^1$:
\begin{eqnarray*}
x_F(T^1) &=& x_0+ \frac{f(x_0,0,-1)}{1+g(x_0,0,-1)}T^1+ \OO(\al^2)\\
&=& x_0+ 2\al \frac{f(x_0,0,-1)}{g(x_0,0,-1)}+\OO(\kappa\al , \frac{ \dd} {\kappa}\al, \kappa \al \log{\frac{\dd}{2}},\al ^2)
 \end{eqnarray*}
Therefore we obtain:
$$
x_F(T^1)- x(T^1)= \OO(\kappa\al , \frac{ \dd} {\kappa}\al, \kappa \al \log{\frac{\dd}{2}},\al ^2).
$$

\textbf{Proof of Theorem \ref{thm:filalpha}}

To prove Theorem \ref{thm:filalpha} we just need to note that the time $T^1$ needed in one iteration of the Poincar\'{e} map from section $\Sigma_0$ to itself is of
order $\ord\alpha$. Therefore, proceeding as in Lemma \ref{lem:numbercycles}, one can see that we will need $\OO(\frac{1}{\al})$ iterations of the
Poincar\'{e} map to arrive to time $T$.
Consequently
$$
x_F(T)- x(T)= \OO(\kappa , \frac{ \dd} {\kappa}, \kappa \log{\frac{\dd}{2}},\al ).
$$
for $0< \kappa <\frac{1}{4}$, $|\al| \le \sigma ^*$, $0<\dd \le \sigma ^*$, $0<\frac{\dd}{\kappa} \le \sigma ^*$, $C\kappa |\log \frac{\dd}{2}|\le \frac{1}{2}$.
Renaming $\sigma^*= \alpha_0$ we get the result.

\newpage
\section{Proof of Theorem \ref{thm:utkinalpha}: slow manifold gives nonlinear sliding to $\ord{\alpha}$ for $\alpha<0<\eps$}\label{sec:uproof}

\subsection{An attracting invariant curve}

We will first show that there exists an invariant curve, $\op Q$, on which the dynamics is a perturbation of the Utkin dynamics \eref{xutkin}.
We then show that this curve is an attractor.

Writing 
\eref{bslow} with $\alpha$ (and thus $\kappa$) negative gives
\begin{eqnarray}
\label{eq:fast}
\begin{array}{rll}
 \dot x&=& f\bb{x,|\alpha| v;u}\;,\\
|\alpha| \dot v&=&g\bb{x,|\alpha| v;u}\;,\\
|\kappa||\alpha| \dot u&=&\phi\bb{\frac{v-u}{|\kappa|}}-u\;.
\end{array}
\end{eqnarray}

Taking the limit $\alpha\rightarrow0$
gives:
\begin{eqnarray}\label{bslow0}
\begin{array}{rll}
\dot x&=&f\bb{x,0;u}\;,\\
0&=&g\bb{x,0;u}\;,\\
0&=&\phi\bb{\frac{v-u}{|\kappa|}}-u\;,
\end{array}
\end{eqnarray}
which is formally similar
to Utkin's system \eref{avcon}.
This system defines a slow one-dimensional system on the critical manifold which is now a curve $\op Q$:
\begin{equation}\label{Cut}
\op Q=\cc{\;(x,v,u):\;g\bb{x,0;u}=0,\;u=\phi\bb{\mbox{$\frac{v-u}{|\kappa|}$}}\;}\;,
\end{equation}
so that on $\op Q$ the dynamics is Utkin's.

To find the dynamics outside $\op Q$ we rescale time, denoting the derivative with respect to $\tau=t/|\alpha|$ with a prime, then
\begin{eqnarray}
\label{slowv}
\begin{array}{rll}
 x'&=&|\alpha| f\bb{x,|\alpha| v;u}\;,\\
v'&=&g\bb{x,|\alpha| v;u}\;,\\
|\kappa|u'&=&\phi\bb{\frac{v-u}{|\kappa|}}-u\;.
\end{array}
\end{eqnarray}
Setting $\alpha=0$ gives a two-dimensional fast subsystem
\begin{eqnarray}\label{slowv0}
\begin{array}{rll}
 x'&=&0\;,\\
v'&=&g\bb{x,0;u}\;,\\
|\kappa|u'&=&\phi\bb{\frac{v-u}{|\kappa|}}-u\;,
\end{array}
\end{eqnarray}
whose equilibria are the set $\op Q$.

The invariant manifold geometry is illustrated in \fref{fig:utkin}.
\begin{figure}[h!]\centering\includegraphics[width=0.75\textwidth]{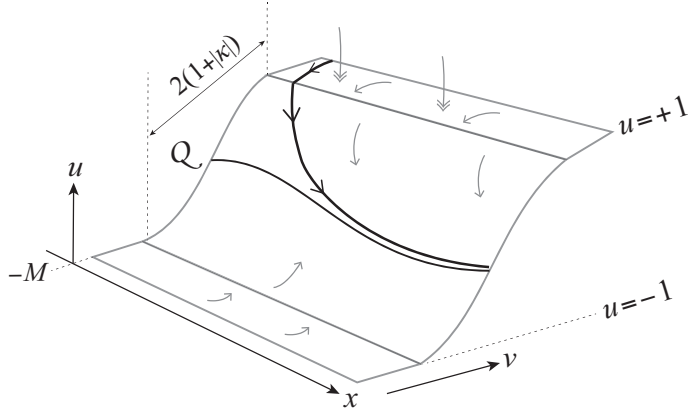}
\vspace{-0.3cm}
\caption{\footnotesize\sf The  manifold $\Phi(\frac{v-u}{|\kappa|})-u=0$, the critical curve $\op Q$, and an illustrative trajectory.}\label{fig:utkin}\end{figure}

Unlike the case $\alpha>0$ we can proceed more simply here by keeping $\kappa=\eps/\alpha$ small but nonvanishing.
By fixing $\kappa\neq0$ the function $\phi\bb{\frac{v-u}{|\kappa|}}$ will remain smooth and we can apply Fenichel's theory of normally hyperbolic slow manifolds.
Applying Fenichel's theory for two fast variables $v$ and $u$ and a slow variable $x$,
we can first show that the invariant curve $\op Q$ persists under $\alpha$-perturbation:
\begin{lemma}\label{thm:Q}
Take $\kappa < 0$. Then there exists $\al_0= \al_0(\kappa)$, such that for $\alpha_0<\alpha<0$,
then in the system \eref{bslow}
there exist attracting invariant curves $\op Q^\alpha$ which lie $\alpha$-close to $\op Q$,
on which the dynamics is differomorphic to the slow subsystem \eref{bslow0}.
\end{lemma}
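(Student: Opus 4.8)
\emph{Proof plan.}
Fix $\kappa<0$ and regard \eqref{slowv} — equivalently \eqref{bslow} with $\alpha<0$ — as a slow--fast system with slow variable $x$, fast variables $(v,u)$ and singular parameter $\alpha$. Since $\kappa$ is held fixed and nonzero, the map $(v,u)\mapsto\phi\!\left(\tfrac{v-u}{|\kappa|}\right)$ is $C^\infty$, so the right-hand side of \eqref{slowv} is smooth in $(x,v,u)$ and in $\alpha$ near $\alpha=0$. Setting $\alpha=0$ yields the fast subsystem \eqref{slowv0}, whose equilibrium set is precisely the curve $\op Q$ of \eqref{Cut}. The first step is to record that, over the compact slow range $|x|\le M$, $\op Q$ is a compact curve with boundary: by \eqref{g} the equation $g(x,0;u)=0$ has a unique root $u=U(x)$ (as in \eqref{uutkin}), and since $g(x,0;\pm1)$ have the signs prescribed in \eqref{g} this root satisfies $|U(x)|<1$ strictly; the relation $u=\phi\!\left(\tfrac{v-u}{|\kappa|}\right)$ with $|u|<1$ then forces $\bigl|\tfrac{v-u}{|\kappa|}\bigr|<1$, so $v=U(x)+|\kappa|\,\phi^{-1}\!\bigl(U(x)\bigr)$ and $\op Q$ is the graph of a smooth function of $x\in[-M,M]$.

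The key step is to verify that $\op Q$ is normally hyperbolic and attracting for \eqref{slowv0}. Its transverse linearisation is the $2\times2$ Jacobian in $(v,u)$ of the fast field $(v,u)\mapsto\bigl(g(x,0;u),\ \tfrac1{|\kappa|}(\phi(\tfrac{v-u}{|\kappa|})-u)\bigr)$, namely
\[
J=\begin{pmatrix} 0 & \partial_u g(x,0;u)\\[4pt] \dfrac{\phi'}{|\kappa|^{2}} & -\dfrac1{|\kappa|}\Bigl(1+\dfrac{\phi'}{|\kappa|}\Bigr)\end{pmatrix},\qquad \phi'=\phi'\!\Bigl(\tfrac{v-u}{|\kappa|}\Bigr).
\]
On $\op Q$ one has $\partial_u g(x,0;u)<0$ by \eqref{g} and $\phi'>0$ by the first step, so $\operatorname{tr}J<0$ and $\det J=-\partial_u g(x,0;u)\,\phi'/|\kappa|^{2}>0$; hence both eigenvalues of $J$ have strictly negative real part, uniformly for $x\in[-M,M]$. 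Thus $\op Q$ is an attracting normally hyperbolic critical manifold.

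With this in hand I would invoke Fenichel's invariant manifold theorem, in its version for critical manifolds with boundary (so the conclusion is a \emph{locally} invariant manifold, orbits being able to escape only through $x=\pm M$, exactly as in the statement): there is $\alpha_0=\alpha_0(\kappa)<0$ such that for every $\alpha$ with $\alpha_0<\alpha<0$ the system \eqref{slowv} admits a locally invariant manifold $\op Q^\alpha$, $C^1$-$\mathcal{O}(\alpha)$-close to $\op Q$ and diffeomorphic to it, attracting with an exponentially contracting neighbourhood, and on which the restricted flow is a regular $\mathcal{O}(\alpha)$-perturbation of the reduced slow flow \eqref{bslow0}. Undoing the harmless time rescaling from $\tau$ back to $t$, this $\op Q^\alpha$ is the claimed attracting invariant curve of \eqref{bslow}, and its dynamics is diffeomorphic to that of \eqref{bslow0} — i.e. to Utkin's equivalent control, since on $\op Q$ the reduced equation is $\dot x=f(x,0;U(x))=f_U(x)$.

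The genuine points to be careful about are: (i) making precise that $|U(x)|<1$ \emph{strictly} on $\op Q$, since this strict inequality is what guarantees $\phi'>0$ and hence $\det J>0$ rather than merely $\det J\ge0$ — i.e. that $J$ has no zero eigenvalue and $\op Q$ is genuinely normally hyperbolic; and (ii) the dependence $\alpha_0=\alpha_0(\kappa)$, which is unavoidable because the entries of $J$ — and therefore the spectral gap, the $C^1$ size of $\op Q$, and Fenichel's smallness threshold — degrade as $|\kappa|\to0$, so no estimate uniform in $\kappa$ is expected or needed, consistently with the standing assumption $0<\eps\ll|\alpha|\ll1$. The boundary at $x=\pm M$ is handled in the usual way: extend $g$ and $\phi$ smoothly to a slightly larger $x$-strip, apply Fenichel there, and restrict; the extension is irrelevant because trajectories of interest leave $\op Q^\alpha$ only through $x=\pm M$.
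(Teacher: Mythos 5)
Your proposal is correct and follows essentially the same route as the paper: you compute the same $2\times2$ Jacobian of the $(v,u)$ fast subsystem on $\op Q$, conclude normal hyperbolic attractivity from $\operatorname{tr}J<0$ and $\det J=-\partial_u g\,\phi'/\kappa^2>0$ using \eref{g} and \eref{phi}, and then invoke Fenichel's theorem for one slow and two fast variables with $\kappa$ fixed. Your added care about the graph form of $\op Q$, the strict bound $|U(x)|<1$ guaranteeing $\phi'>0$, and the boundary at $x=\pm M$ only makes explicit what the paper leaves implicit.
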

\begin{proof}
The two-dimensional $(v,u)$ fast subsystem has a Jacobian derivative at $\op Q$ with determinant
\begin{equation}
\abs{\begin{array}{cc}\frac{\partial v'}{\partial v}&\frac{\partial v'}{\partial u}\\\frac{\partial u'}{\partial v}&\frac{\partial u'}{\partial u}\end{array}}
=-\frac{1}{\kappa^2}\phi'\bb{\frac{v-u}{|\kappa|}}\frac{\partial\;}{\partial u}g\bb{x,0;u}>0
\end{equation}
since the third term is negative by \eref{g} and the second is positive by \eref{phi}.
Moreover
\begin{equation}
{\rm Tr}\bb{\begin{array}{cc}\frac{\partial v'}{\partial v}&\frac{\partial v'}{\partial u}\\\frac{\partial u'}{\partial v}&\frac{\partial u'}{\partial u}\end{array}}
=-\frac1{\kappa^2}\phi'\bb{\frac{u-v}{|\kappa|}}-\frac {1}{|\kappa|}<0
\end{equation}
This implies that the curve $\op Q$ is normally hyperbolic attracting for all $x\in(-M,+M)$.
The existence of an invariant manifold $\op Q^\alpha$ in the system \eref{bslow} then follows from Fenichel's theory for a
differentiable system with one-slow and two-fast variables \cite{j95}.
\end{proof}

The next step is to show that the flow is strongly attracted towards $\op Q^\alpha$, and hence is closely approximated by the Utkin dynamics on $\op Q$.

\subsection{A positively invariant block}
By hypotheses \eqref{g} we know there exists $0<u^*<1$,  such that, reducing $\alpha_0$ if necessary,
for $|\alpha|\le \alpha_0$, $|x|\le M$, $|v|\le M$ we have that:
\begin{equation}\label{eq:gG}
\begin{array}{rcl}
 g(x,|\alpha|v,u) \le - G <0, \quad |x|\le M, \ u^* <u\le 2M \\
 g(x,|\alpha|v,u) \ge G >0, \quad |x|\le M, \ -2M \le u < -u^*
\end{array}
\end{equation}


%
\begin{figure}[h!]\centering\includegraphics[width=1\textwidth]{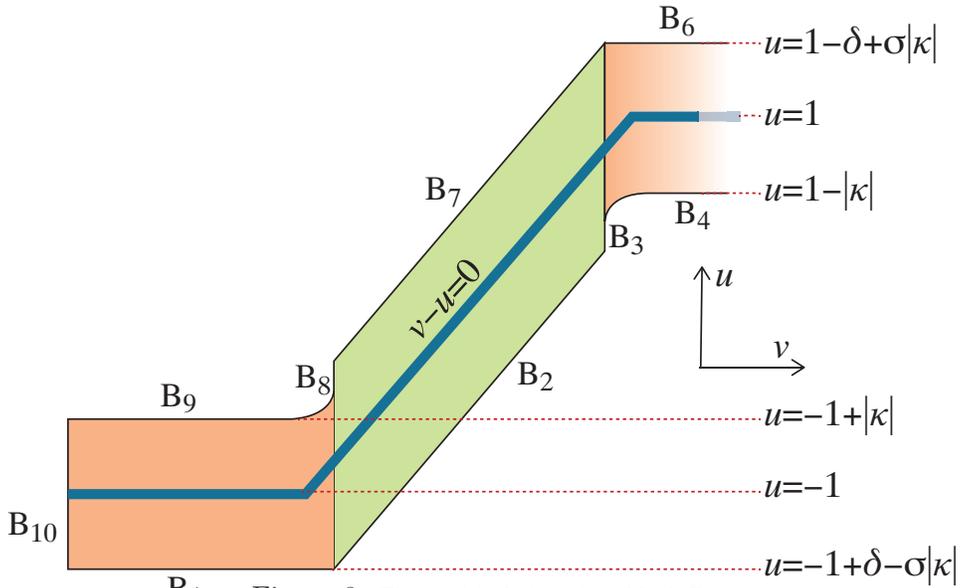}
\vspace{-1.5cm}\caption{\footnotesize\sf The positively invariant block $\bf B$.}
\label{fig:block}
\end{figure}

\begin{lemma}\label{lem:block1}
Take $\kappa <0$. There exists $0<\delta <|\kappa|$ small enough such the surface
$$
\mathbf{S}=\{(x,v,u), \ |x|< M, \ 1-\delta \le v \le M, \ \phi(\frac{v-u}{|\kappa|})-u-|\kappa|=0 \},
$$
in the region $1-\delta <v<1$ is bounded from below by the plane $u=v-|\kappa|$ and by $u=v$ from above.
\end{lemma}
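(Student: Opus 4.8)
The plan is to simplify the relation defining $\mathbf{S}$ by a change of variable and reduce the claim to a scalar inequality. On $\mathbf{S}$ one has $\phi\bb{\frac{v-u}{|\kappa|}}=u+|\kappa|$; setting $w=\frac{v-u}{|\kappa|}$ (so that $u=v-|\kappa|w$) this becomes the single equation
$$
\phi(w)=v+|\kappa|(1-w)\;.
$$
The two inequalities to be proved, namely $v-|\kappa|\le u\le v$, are precisely $0\le w\le 1$. Hence it suffices to show that, for a suitable $\delta$, every solution $w$ of the displayed equation with $1-\delta<v<1$ lies in $(0,1)$; note that $|\kappa|<1$ by the standing assumption $0<\eps\ll|\alpha|\ll1$, so any $\delta$ with $0<\delta<|\kappa|$ automatically has $\delta<1$ and the strip $1-\delta<v<1$ is nondegenerate.

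The bound $w<1$ requires no smallness. If $w\ge 1$ then $\phi(w)=1$ — here I use that $\phi$ is a continuous sigmoid of the form \eref{phi}, so that $\phi\equiv{\rm sign}$ outside $[-1,1]$ and $\phi(1)=1$ — and the equation forces $1=v+|\kappa|(1-w)\le v$ since $|\kappa|>0$ and $1-w\le 0$, contradicting $v<1$. The bound $w>0$ is where the choice of $\delta$ enters: since $\phi(w)\le 1$ always, the equation gives $|\kappa|w=v+|\kappa|-\phi(w)\ge v+|\kappa|-1$, hence
$$
w\ge 1-\frac{1-v}{|\kappa|}>1-\frac{\delta}{|\kappa|}\;,
$$
and the right-hand side is positive exactly when $\delta<|\kappa|$. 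So any $\delta$ with $0<\delta<|\kappa|$ works, and the estimate is uniform in $x$ and in $v\in(1-\delta,1)$ since neither enters beyond the inequality $v>1-\delta$.

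Finally I would check that $\mathbf{S}$ is nonempty in this strip and is in fact a graph over $(x,v)$: for fixed $(x,v)$ with $|x|<M$ and $1-\delta<v<1$, the map $u\mapsto\phi\bb{\frac{v-u}{|\kappa|}}-u-|\kappa|$ is continuous, has derivative $-\frac1{|\kappa|}\phi'\bb{\frac{v-u}{|\kappa|}}-1\le -1<0$, and tends to $+\infty$ as $u\to-\infty$ and to $-\infty$ as $u\to+\infty$, so it has a unique zero $u=U(v)$; the two bounds above then give $v-|\kappa|<U(v)<v$, i.e. the portion of $\mathbf{S}$ with $1-\delta<v<1$ lies strictly between the planes $u=v-|\kappa|$ and $u=v$. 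I do not expect a real obstacle here; the only points needing a little care are the value of $\phi$ at the endpoint $w=1$ (handled by continuity) and verifying that $\delta$ is genuinely independent of $x$ and of $v$, which it is.
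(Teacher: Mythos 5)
Your proof is correct. The substitution $w=(v-u)/|\kappa|$ turns the defining relation into $\phi(w)=v+|\kappa|(1-w)$ and the two bounds into $0<w<1$, and both estimates are sound: $w\ge 1$ forces $\phi(w)=1$ (continuity of $\phi$ at $w=1$, where it must equal $1$ because $\phi\equiv 1$ for $w>1$) and hence $v\ge 1$, contradicting $v<1$; and $\phi(w)\le 1$ gives $w>1-\delta/|\kappa|>0$ as soon as $\delta<|\kappa|$. For the lower bound $u>v-|\kappa|$ this is a genuinely different route from the paper: there, one first locates the intersection of $\mathbf{S}$ with $u=v-|\kappa|$ at $v=1$, then invokes the property $\phi(\xi)>\xi$ for $\xi$ slightly below $1$ (a consequence of smoothness of $\phi$ at $w=1$, where $\phi(1)=1$ and $\phi'(1)=0$, but an extra fact your argument never needs) and compares $\mathbf{S}$ with the auxiliary surface $u=(v-u)/|\kappa|-|\kappa|$ by an explicit computation; for the upper bound $u<v$ the paper's argument ($u\le 1-|\kappa|<v$ since $\delta<|\kappa|$) is essentially the same as your $w>0$ estimate. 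What your version buys is that it works for every $0<\delta<|\kappa|$ rather than an unspecified ``small enough'' $\delta$, the bounds are explicit and uniform in $x$ and $v$, and your closing observation that $u\mapsto\phi\left(\frac{v-u}{|\kappa|}\right)-u-|\kappa|$ is strictly decreasing additionally shows that $\mathbf{S}$ is a nonempty graph over $(x,v)$, which the lemma does not require but is reassuring for the construction of the block $\mathbf{B}$ that follows.
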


\begin{proof}
We will use that for $0<1-\xi$ small enough $\phi(\xi)>\xi$.
Observe that the surface $\mathbf{S}$ contains the plane $\{(x,v,u), \ |x|< M, \ 1\le v \le M, \ u=1-|\kappa| \}$.
The intersection of $\mathbf{S}$ with $u=v-|\kappa|$ is the line $u=1-|\kappa|,\ v=1$. Observe that at this point $\frac{v-u}{|\kappa|}=1$, therefore choosing
$\delta$ small enough and $1-\delta \le v\le 1$ we can ensure that if $(x,v,u)$ are in $\mathbf{S}$ then,
$u=\phi(\frac{v-u}{|\kappa|})-|\kappa|>\frac{v-u}{|\kappa|}-|\kappa|$.
On the surface $u=\frac{v-u}{|\kappa|}-|\kappa|$ one has that
$u= \frac{v}{1+|\kappa|}- \frac{|\kappa|^2}{1+|\kappa|}>v-|\kappa|$ if $v<1$, therefore $u=v-|\kappa|$
bounds the surface $\mathbf{S}$ from bellow in the region $1-\delta <v<1$.

For the upper bound we just use that $u\le 1-|\kappa|$ and that $\delta <|\kappa|$.

\end{proof}
Lets call $u_\delta$ the $u$-coordinate of the intersection of $\mathbf{S}$ with $v=1-\delta$, that is:
$$
\phi(\frac{1-\delta-u_\delta}{|\kappa|})-u_\delta-|\kappa|=0
$$
we know that $1-\delta -|\kappa| <u_\delta < 1-\delta$.

Take $\sigma>0$ such that $\delta <\sigma \kappa$ (we will fix its value in Proposition \ref{prop:blockutkin}) and consider now the block $\mathbf{B}$
whose exterior borders are given by the sets $\{ x=\pm M\}$ and:
\begin{eqnarray*}
\mathbf{B_1}&=& \{(x,v,u), \ |x|< M, \ -M \le v \le -1+\delta ,\; u= -1+\delta-\sigma |\kappa|\}\\
\mathbf{B_2}&=& \{ (x,v,u), \ |x|< M, -1+\delta \le v \le 1-\delta,\; u= v-\sigma |\kappa| \}\\
\mathbf{B_3}&=& \{(x,v,u), \ |x|< M, \ v= 1-\delta ,\; 1-\delta-\sigma|\kappa|\le u \le u_\delta \}\\
\mathbf{B_4}&=& \{(x,v,u), \ |x|< M, \ 1-\delta \le v \le M, \; \phi(\frac{v-u}{|\kappa|})-u-|\kappa|=0 \}\\
\mathbf{B_5}&=& \{(x,v,u), \ |x|< M, \ v = M ,\; 1-|\kappa| \le u\le  1-\delta+\sigma |\kappa|\}\\
\mathbf{B_6}&=& \{(x,v,u), \ |x|< M, \ 1-\delta \le v \le M, \; u= 1-\delta+\sigma |\kappa| \}\\
\mathbf{B_7}&=&\{ (x,v,u), \ |x|\le M, -1+\delta \le v \le 1-\delta, \;u= v+\sigma |\kappa| \}\\
\mathbf{B_8}&=& \{(x,v,u), \ |x|< M, \ v= -1+\delta , \;-u_\delta \le u \le -1+\delta+\sigma\kappa \}\\
\mathbf{B_9}&=& \{(x,v,u), \ |x|< M, \ -M \le v \le -1+\delta, \; \phi(\frac{v-u}{|\kappa|})-u-|\kappa|=0 \}\\
\mathbf{B_{10}}&=& \{(x,v,u), \ |x|< M, \ v =- M, \; -1+\delta-\sigma|\kappa| \le u\le -1+|\kappa|\}
 \end{eqnarray*}

Observe that, being $\mathbf{B_4}=\mathbf{S}$ (and analogously for $\mathbf{B_9}$), we know that, by Lemma \ref{lem:block1}, $\mathbf{B}$ is well defined.

We will see that the solutions of system \eqref{slowv} which enter this block can only leave it through $|x|=M$.
\begin{proposition}\label{prop:blockutkin}
 Let $\sigma = G+2$ (see \eqref{eq:gG}).
 Then for $|\alpha| \le \alpha _0$, 
 $0< \delta \le \kappa \le \frac{1-u^*}{2\sigma}$,
any solution of system \eqref{slowv} entering $\mathbf{B}$ leaves it through the boundaries $x=\pm M$.
\end{proposition}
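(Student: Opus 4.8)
The plan is the standard barrier argument: show that along every face of $\mathbf{B}$ other than the two caps $\{x=\pm M\}$ the vector field of \eqref{slowv} points strictly into $\mathbf{B}$, so that a trajectory cannot cross such a face outward and the only admissible exit is through $\{x=\pm M\}$. A useful preliminary remark is that each of the ten faces $\mathbf{B_1},\dots,\mathbf{B_{10}}$ is a level set of a function of $(v,u)$ only, so the $x$-equation $x'=|\alpha|f$ never contributes to the relevant normal derivative; the hypothesis $|\alpha|\le\alpha_0$ is used only so that trajectories inside $\mathbf{B}$ keep $|v|\le M$, whence the one-sided estimates \eqref{eq:gG} for $g$ and a uniform bound $|g|\le C$ on the compact set are available. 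I would then treat the faces in three groups.

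First, the flat faces. On the constant-$u$ faces $\mathbf{B_1}$ ($u=-1+\delta-\sigma|\kappa|$) and $\mathbf{B_6}$ ($u=1-\delta+\sigma|\kappa|$) only the third equation $|\kappa|u'=\phi\bb{\frac{v-u}{|\kappa|}}-u$ together with $|\phi|\le1$ is needed, giving $|\kappa|u'\ge-\delta+\sigma|\kappa|>0$ on $\mathbf{B_1}$ and $|\kappa|u'\le\delta-\sigma|\kappa|<0$ on $\mathbf{B_6}$ (using $\delta<\sigma|\kappa|$), which is exactly the inward direction. On the constant-$v$ faces $\mathbf{B_3},\mathbf{B_5}$ (at $v=1-\delta$ and $v=M$) and $\mathbf{B_8},\mathbf{B_{10}}$ (at $v=-1+\delta$ and $v=-M$) the prescribed range of $u$ keeps $u$ within $\tfrac{1}{2}(1-u^*)$ of $\pm1$ — here is where $|\kappa|\le\frac{1-u^*}{2\sigma}$, the size of $\sigma$, and $\delta$ small (so that $u_\delta$ is close to $1$) enter — so that $|u|>u^*$ and \eqref{eq:gG} gives $v'=g$ the sign that pushes $v$ back into $\mathbf{B}$.

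Second, the slanted faces $\mathbf{B_2}$ ($u=v-\sigma|\kappa|$) and $\mathbf{B_7}$ ($u=v+\sigma|\kappa|$): there $\frac{v-u}{|\kappa|}=\pm\sigma$, so $\phi=\pm1$ and $u'=\frac{1-v+\sigma|\kappa|}{|\kappa|}$ on $\mathbf{B_2}$ and $u'=\frac{-1-v-\sigma|\kappa|}{|\kappa|}$ on $\mathbf{B_7}$, and the inward condition reduces to the sign of $u'-g$. Where $|u|>u^*$ this is immediate from the definite sign of $g$; on the complementary part of the face $v$ is bounded away from $1$ by $\tfrac{1}{2}(1-u^*)$, so $|u'|\ge\frac{1-u^*}{2|\kappa|}\ge\sigma$, which dominates $|g|\le C$ provided $\sigma$ is taken larger than a uniform bound for $|g|$. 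Third, the slow-manifold faces $\mathbf{B_4}=\mathbf{S}$, on which $\phi\bb{\frac{v-u}{|\kappa|}}-u-|\kappa|=0$, and its counterpart $\mathbf{B_9}$ near the branch $u\approx-1$: the key simplification is that on $\mathbf{B_4}$ one has $u'=\frac{\phi-u}{|\kappa|}=1$ identically, so differentiating $H=\phi\bb{\frac{v-u}{|\kappa|}}-u-|\kappa|$ along the flow gives $H'=\frac{\phi'}{|\kappa|}(g-1)-1$; since the $u$-coordinate on $\mathbf{B_4}$ stays near $+1$, hence $g<-G<0$, and $\phi'\ge0$, this forces $H'\le-1<0$, i.e.\ the flow is pushed strictly to the side of $\mathbf{B_4}$ on which $\op Q^\alpha$ lies, and the mirror computation handles $\mathbf{B_9}$.

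The part I expect to be the real obstacle is not any single estimate but the geometric bookkeeping: verifying, with the help of Lemma \ref{lem:block1}, that the ten faces genuinely close up into a solid tube around $\op Q^\alpha$, identifying on which side of each face $\mathbf{B}$ lies so that ``inward'' above is the correct sign, and ruling out escape through the codimension-two edges where adjacent faces meet — handled routinely, since all the inequalities above are strict and survive a small rounding of the corners. Granting this, every trajectory of \eqref{slowv} entering $\mathbf{B}$ stays there until $|x|=M$; combined with Lemma \ref{thm:Q} this traps trajectories in the basin of the attracting curve $\op Q^\alpha$ and is what then yields, through Fenichel's theory, the $O(|\alpha|)$ approximation of the Utkin solution asserted in Theorem \ref{thm:utkinalpha}.
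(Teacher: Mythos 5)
Your proposal is correct and follows essentially the same route as the paper: a face-by-face check that the flow of \eqref{slowv} points strictly inwards on every boundary face of $\mathbf{B}$ except $x=\pm M$, using $|\phi|\le 1$ together with $\delta<\sigma|\kappa|$ on $\mathbf{B_1},\mathbf{B_6}$, the sign condition \eqref{eq:gG} on the constant-$v$ faces, the comparison of $u'$ with $g$ on the slanted faces $\mathbf{B_2},\mathbf{B_7}$, and the identity $u'=1$ on $\mathbf{S}$ for $\mathbf{B_4},\mathbf{B_9}$ (your computation $H'=\frac{\phi'}{|\kappa|}(g-1)-1<0$ is exactly the paper's scalar product with the exterior normal $\bigl(0,\frac{\phi'}{|\kappa|},-\frac{\phi'}{|\kappa|}-1\bigr)$). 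One minor bookkeeping remark: on the part of $\mathbf{B_2}$ with $u<-u^*$ (and symmetrically on $\mathbf{B_7}$) the sign of $g$ alone does not give inwardness, since there $g\ge G>0$, but your magnitude estimate $u'\ge\frac{1-u^*}{2|\kappa|}\ge\sigma$ applies verbatim on that portion as well (there $v\le -u^*+\sigma|\kappa|$), which is precisely how the paper argues, bounding $g$ above by the constant entering $\sigma=G+2$.
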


\begin{proof}

$\bullet$ In $\mathbf{B_1}$, as $u= -1+\delta-\sigma |\kappa|<-1$, $u'= \frac{1}{|\kappa|}\left[\phi\bb{\frac{v-u}{|\kappa|}}-u\right] >0$,
therefore the flow points inwards $\textbf{B}$ along this border.
Analogously in $\mathbf{B_6}$.

$\bullet$ In $\mathbf{B_2}$ $u= v-\sigma|\kappa|$ and the exterior normal vector is $(0, 1,-1)$, therefore,
the condition to ensure that the vector field points inwards is:
$$
g(x,|\alpha|v,u) - \frac{1}{|\kappa|}\left[\phi(\frac{v-u}{|\kappa|})-u\right]<0, \ -1+\delta\le v \le 1-\delta
$$
which gives, using that $g$ satisfies \eqref{eq:gG}, $u= v-\sigma|\kappa|$ and that $\sigma=G+2$:
$$
g(x,|\alpha|v,u) - \frac{1}{|\kappa|}[\phi(\sigma)-v+\sigma|\kappa|] < G -\sigma - \frac{1}{|\kappa|}[1-v]\le -2-\frac{\delta}{\kappa}<0
$$
Analogously for $\mathbf{B_7}$.

$\bullet$ In $\mathbf{B_3}$,
$u \ge 1-\delta -\sigma |\kappa|\ge 1-2\sigma |\kappa| >u^*$ and therefore, by \eqref{eq:gG},
$g(x,v,u)<-G$, which implies that the flow points inwards
$ \mathbf{B}$ at this border. Analogously for $\mathbf{B_8}$.

$\bullet$ In $\mathbf{B_4}$, the exterior normal vector is $(0, \frac{1}{|\kappa|}\phi'(\frac{v-u}{|\kappa|}), -\frac{1}{|\kappa|}\phi'(\frac{v-u}{|\kappa|})-1)$
Therefore, we need to see that:
$$
g(x,|\alpha|v, u)\frac{1}{|\kappa|}\phi'(\frac{v-u}{|\kappa|})- \left(\frac{1}{|\kappa|}\phi'(\frac{v-u}{|\kappa|})+1\right)
\frac{1}{|\kappa|}\left(\phi(\frac{v-u}{|\kappa|})-u\right)<0
$$
that, for points in $\mathbf{B_4}$ gives:
$$
g(x,|\alpha|v, u)\frac{1}{|\kappa|}\phi'(\frac{v-u}{|\kappa|})- \left(\frac{1}{|\kappa|}\phi'(\frac{v-u}{|\kappa|})+1\right)<0
$$
The only observation is that in $\mathbf{B_4}$, $u\ge u_\delta\ge 1-\delta -\sigma |\kappa|>u^*$.
therefore, by \eqref{eq:gG}
we know that $g<0$.
Analogously for $\mathbf{B_9}$.

$\bullet$ In $\mathbf{B_5}$, $u \ge u^* $ and therefore $g<0$. Analogously for $\mathbf{B_{10}}$.

\end{proof}

\begin{lemma}\label{thm:Qat}
With the same hypotheses of Proposition \ref{prop:blockutkin},
take initial conditions $z_0=(x_0,v_0,u_0)$ in the interior of $\mathbf{B}$ with $x_0=x_U(0)$, where $x_U(t)$ is the solution of \eref{xutkin}.
Then, For  $t\in[0,T]$
we have $|x(t)-x_U(t)|<\ord\alpha$.
\end{lemma}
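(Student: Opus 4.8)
The plan is to derive the estimate from two facts already available: the block $\mathbf B$ is positively invariant (Proposition~\ref{prop:blockutkin}), confining the solution to a fixed compact set as long as $|x|<M$, and Lemma~\ref{thm:Q} supplies the attracting invariant curve $\op Q^\alpha$ together with Fenichel's stable fibration of it. The key observation is that the fibration lets one project the true solution onto $\op Q^\alpha$ at a cost of only $\ord{\al}$ in the $x$-coordinate, uniformly in $t$, so that no separate transient estimate is needed and no $\log\al$ factor appears.

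First I would record the confinement. Since $[0,T]$ is compact and $|x_U(t)|<M$ there, one can fix $\eta>0$ with $|x_U(t)|\le M-\eta$ on $[0,T]$, and work with $\al$ small relative to $\eta$, $T$ and the bounds \eqref{fitesfgK}. Because positive invariance of a region does not depend on the time parametrisation, Proposition~\ref{prop:blockutkin} applies to \eqref{eq:fast}: writing $z(t)=(x(t),v(t),u(t))$ for the solution with $z(0)=z_0$ in the interior of $\mathbf B$, and letting $t^\ast\in(0,\infty]$ be the first time $|x(t)|=M$, one has $z(t)\in\mathbf B$ for $t\in[0,t^\ast)$, and on $\mathbf B$ the data $f$, $g$, $\phi'$ and $v$ are bounded.

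Next I would use the Fenichel structure. By Lemma~\ref{thm:Q}, $\op Q^\alpha$ is normally hyperbolic attracting and $\ord{\al}$-close to $\op Q$, i.e. a graph $u=U(x)+\ord{\al}$, $v=v_{\op Q}(x)+\ord{\al}$ over $x\in(-M,M)$; Fenichel's theorem for one slow and two fast variables~\cite{j95} also furnishes the invariant stable fibration of $\op Q^\alpha$, whose fibres are $C^1$ $\ord{\al}$-close to the fast fibres $\{x=\mathrm{const}\}$ of the layer system \eqref{slowv0}. Since $\mathbf B$ is attracted to $\op Q^\alpha$ (by the construction of $\mathbf B$ together with normal hyperbolicity) and has uniformly bounded $(v,u)$-extent, the base-point projection $\pi:\mathbf B\to\op Q^\alpha$ along stable fibres is well defined with $|x-x_{\pi(z)}|\le L\al$. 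Setting $p(t)=\pi(z(t))$, invariance of the fibration gives that $p(t)$ is a trajectory of the reduced flow on $\op Q^\alpha$ and that $|x(t)-x_{p(t)}|\le L\al$ for all $t\in[0,t^\ast)$. On $\op Q^\alpha$ the $x$-equation reads $\dot x_p=f(x_p,\ord{\al};U(x_p)+\ord{\al})=f_U(x_p)+\ord{\al}$ with $x_{p(0)}=x_0+\ord{\al}=x_U(0)+\ord{\al}$, so a Gronwall comparison with $\dot x_U=f_U(x_U)$ on $[0,\min(t^\ast,T)]$ yields $|x_{p(t)}-x_U(t)|\le L\al$ and hence $|x(t)-x_U(t)|\le L\al$ there. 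A continuation argument then closes the loop: on this interval $|x(t)|\le M-\eta+L\al<M$ once $\al<\eta/L$, so $t^\ast>T$ and the bound $|x(t)-x_U(t)|<\ord{\al}$ holds throughout $[0,T]$; the companion bound $|y(t)|=|\al|\,|v(t)|<\ord{|\al|}$ from Theorem~\ref{thm:utkinalpha} follows because $v$ stays bounded in $\mathbf B$.

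The part I expect to be the real work is the fibration step: one must check that Fenichel's fibre theorem applies uniformly on the whole block $\mathbf B$ (one slow variable $x$, two fast variables $v,u$, singular parameter $|\al|$, with $\kappa<0$ fixed so that $\phi(\tfrac{v-u}{|\kappa|})$ remains smooth), so that projecting onto $\op Q^\alpha$ along stable fibres costs only $\ord{\al}$ in $x$ and is genuinely flow-invariant. This uniformity is precisely what avoids the $\log\al$ loss that a naive ``wait for the fast transient to die out'' argument would incur, and is the reason the error here is $\ord{\al}$ rather than the $\ord{\al\log\al}$ of the hysteretic (Filippov) case.
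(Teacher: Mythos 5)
Your proposal is correct and follows essentially the paper's own route: positive invariance of $\mathbf B$ (Proposition \ref{prop:blockutkin}), the attracting Fenichel curve $\op Q^\alpha$ of Lemma \ref{thm:Q} on which the dynamics is an $\ord{\al}$-perturbation of Utkin's system \eref{xutkin}, an $\ord{\al}$ control of the $x$-displacement before the solution is captured, and a finite-time comparison on $[0,T]$. The paper replaces your stable-fibration projection by the simpler observation that the solution reaches the $\OO(1)$ basin of attraction of $\op Q^\alpha$ in a time $\OO(\al)$ (during which $x$ drifts only $\OO(\al)$) and then remains near $\op Q^\alpha$; in particular no $\log\al$ is lost by that transient argument either --- the $\al\abs{\log\al}$ of the Filippov case comes from accumulating errors over the $\OO(1/\al)$ hysteresis cycles, not from a fast transient.
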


\begin{proof}
We first show that the orbit of the point $z_0$
is attracted to the invariant curve $\op Q^\al$ given by the Fenichel theorem and which is $\alpha$ close to $\op Q$ (see \eqref{Cut}). 

We already know, by Fenichel Theorem, that $\op Q ^\al$ is locally attracting.
Due to Proposition \ref{prop:blockutkin}, in fact the entire flow in the region $\mathbf{B}$ considered is
attracted to $\op Q^\al$.

Analogously to Lemma \ref{lem:cotainftemps}, the time needed by the solution $z(t)$ such that $z(0)=z_0$ to reach $x=\pm M$ is of order $\OO(1)$, but the time $t_1$ needed to reach
the neighbourhood of attraction (which is of order $1$) of $\op Q^\al$ is of order $\OO(\al)$, consequently $x(t_1)-x_0=\OO(\al)$ and,
using that $x_0=x_U(0)$, we obtain that $x(t)-X_U(t)=\OO(\al)$ for $0\le t\le t_1$.
To establish that the resulting dynamics is approximated by \eref{bslow0} for $0\le t\le T$
we then need to look more closely at the expression of $\op Q$ and hence of $\op Q^\alpha$.
Firstly,
let us observe that we have that $\op Q$ lies between $\mathbf{B_2}$ and $\mathbf{B_7}$ and therefore in $|v|<1$.
Within this region $\op Q$ lies on the mid-branch of the surface $u=\phi\bb{\frac{v-u}{|\kappa|}}$.
The definition \eref{phi} of $\phi$ implies that the middle branch lies in $|v-u|<|\kappa|$, which tends to $v=u$ as $\kappa\rightarrow0$,
so for small $\kappa<0$ the branch is given by $v=u+|\kappa|\phi^{-1}(u)$.
Then $\op Q$ in the limit $\alpha=0$ is the solution of
$$\begin{array}{rll}
0&=&g\bb{x,0;u}\;,\\
0&=&\phi\bb{\frac{v-u}{|\kappa|}}-u\;.
\end{array}$$
which, calling $h(x)$ the function such that $g(x,0,h(x)=0$, for $|x|\le M$, is given by
$$
\op Q=\cc{(x,u,v)\in(-M,+M)\times\mathbb R^2\;:\;u=h(x),\;v=u+|\kappa|\phi^{-1}(u)}\;.
$$
Applying Fenichel theory for $\alpha<0$, the invariant manifold $\op Q^\alpha$ is a regular $\alpha$-perturbation of $\op Q$,
$$
\op Q^\alpha=\cc{(x,u,v)\in(-M,+M)\times\mathbb R^2\;:\;\begin{array}{l}u=h(x)+\ord{\alpha},\\v=u+|\kappa|\phi^{-1}(u)+\ord{\alpha}\end{array}}\;.
$$
Finally, on $\op Q^\alpha$ the system is an $\alpha$ perturbation of that on $\op Q$, and so for $0\le|\alpha|<|\kappa|\ll1$ we have
$$
\begin{array}{rll}
\dot x&=&f(x,0;u)+\ord{\alpha}\;,\\
0&=&g\bb{x,0;u}+\ord{\alpha}\;,
\end{array}$$
the solution  $z(t)$ stays in the neighbourhood of $\op Q^\al$ for $t\in(0,T)$ and therefore
$$
x(t)=x_U(t)+\ord{\alpha}\;.
$$
\end{proof}

Note here that we keep $\kappa$ non-vanishing, whereas for the hysteretic cases we let $\kappa\rightarrow0$ choosing, for instance, $\kappa=\al$ in
Corollary \ref{cor:filalpha}.
Keeping $\kappa$ bounded away from zero here allows us to keep the righthand side of the ordinary differential equations smooth,
in particular to keep $\phi$ smooth (avoiding $\phi(w/|\kappa|)$ becoming a step function for $\kappa\rightarrow0$).
This allows us to apply Fenichel's theory directly.
Since the outcome of the theorem already gives an $\alpha$-perturbation of the Utkin dynamics when we consider small
$\alpha<0$, the $\kappa\neq0$ result is sufficient here.

\newpage

\bigskip
\noindent{\bf Acknowledgements. }MRJ's contribution carried out primarily at UPC courtesy of UPC and DANCE, and with support from EPSRC Grant Ref: EP/J001317/2. C. Bonet and T.M-Seara are been partially supported by the Spanish MINECO-FEDER Grants MTM2015-65715-P and the Catalan Grant 2014SGR504. Tere M-Seara is also supported by the Russian Scientific Foundation grant 14-41-00044 and the European Marie Curie Action
FP7-PEOPLE-2012-IRSES: BREUDS.


\end{document}